\documentclass[11pt, leqno]{amsart} 
\usepackage{amssymb,amscd,amsfonts,amsbsy}
\usepackage{latexsym}
\usepackage{exscale}
\usepackage{graphicx} 
\usepackage{caption} 
\usepackage{amsmath,amsthm,amsfonts}
\usepackage{mathrsfs}
\usepackage{xcolor} 
\usepackage{esint} 
\usepackage{stmaryrd}
\usepackage{pifont}
\usepackage{tikz}
\usepackage{subcaption}
\usepackage{asymptote}
\usepackage{pgfplots}
\usepackage{subcaption}
\pgfplotsset{compat=1.18}
\usetikzlibrary{patterns}
\usepackage[utf8]{inputenc}

\calclayout

\allowdisplaybreaks

\makeatletter
\@namedef{subjclassname@2020}{%
  \textup{2020} Mathematics Subject Classification}
\makeatother

\theoremstyle{plain}
\newtheorem{theorem}[equation]{Theorem}
\newtheorem{lemma}[equation]{Lemma}

\newtheorem{proposition}[equation]{Proposition}

\theoremstyle{definition}

\newtheorem{remark}[equation]{Remark}

\numberwithin{equation}{section} 

\DeclareMathOperator*{\esssup}{ess\,sup}

\begin{document}
\author{Mingming Cao}
\address{Mingming Cao\\
Institute for Advanced Study in Mathematics\\
Harbin Institute of Technology\\
Harbin 150001\\
People's Republic of China} \email{mingming.cao@hit.edu.cn}

\author{Honghai Liu}
\address{Honghai Liu\\
School of Mathematics and Information Science\\
Henan Polytechnic University\\
Jiaozuo 454000\\
People's Republic of China} \email{hhliu@hpu.edu.cn}

\author{Zengyan Si}
\address{Zengyan Si\\
School of Mathematics and Information Science\\
Henan Polytechnic University\\
Jiaozuo 454000\\
People's Republic of China} \email{zengyan@hpu.edu.cn}

\author{Haibin Yan}
\address{Haibin Yan\\
School of Mathematics and Information Science\\
Henan Polytechnic University\\
Jiaozuo 454000\\
People's Republic of China} \email{hbyan@home.hpu.edu.cn}

\thanks{The first author was partially supported by grants RYC2021-032600-I and PID2022-141354NB-I00. All authors were supported  by NSF of China grant 12571101.}

\year=2025 \month=07 \day=13

\date{\today}

\subjclass[2020]{42B20, 42B25, 42B35}


\keywords{Dimension-free bounds, Maximal operators, 
Weak-type $L\log L$, 
Riesz transforms, 
Maximal Riesz transforms}

\begin{abstract}
Let $M_\psi$ be the maximal operator associated to $\psi(x)=-\ln(1-|x|^2)$. We obtain dimension-free bounds on $L^p(\mathbb R^d)$ for $M_\psi$, $1<p\le\infty$. We show that the weak $L^1$ estimate does not hold for $M_\psi$, but does hold for the corresponding lacunary maximal operator $M_\psi^D$. An weak $L\log L$ type estimate for $M_\psi$ is obtained, which seems to be the right substitution for the weak $L^1$ estimate. Estimates for $M_\psi$  are applied to dimension-free estimates for maximal Riesz transforms obtained by Mateu, Verdera, Orobitg and P\'{e}rez. 
\end{abstract}

\title{Dimension-free bounds for maximal operators associated to log kernel and applications} 

\maketitle
\tableofcontents

\section{Introduction}
Given a function $g$ on $\mathbb R^d$ and $t>0$, we denote by $g_t$ the function $g_t(x)=t^{-d}g(t^{-1}x)$. Let $\psi(x)=-\ln(1-|x|^2)$,  the maximal function $M_\psi$ is given by
\begin{align*}
M_\psi f(x)=\sup_{t>0}|\psi_t\ast f(x)|.
\end{align*}
Note that $\psi$ is positive, unbounded, integrable and supported in open unit ball $B(0,1)$, and it has not an integrable radial decreasing majorant, that is, there is no nonnegative decreasing function $K_0$ on $[0,\infty)$  such that
$\psi(x)\le K_0(|x|)$ and $\|K_0(|\cdot|)\|_{L^1(\mathbb R^d)}<\infty$.
\par
In 1976 Stein \cite{S76} introduced the spherical maximal means of order $\alpha$
\begin{align*}
\mathscr M^\alpha f(x)=\sup_{t>0}\left|\frac1{\Gamma(\alpha)}\int_{|y|\le1}\left(1-|y|^2\right)^{\alpha-1}f(x-ty)dy\right|,
\end{align*}
where $\alpha>0$ and $\Gamma(\alpha)$ is the gamma function.
Stein showed that
\begin{align}\label{sphmmalpha}
\|\mathscr M^\alpha f\|_{L^p(\mathbb R^d)}\le C_{p,d,\alpha}\|f\|_{L^p(\mathbb R^d)}
\end{align}
in the following circumstances
\begin{align*}
\text{if}\ \ 1<p\le2,\ \ \text{when}\ \alpha>1-d+\frac dp
\end{align*}
or
\begin{align}\label{Srangealpha}
\text{if}\ \ 2\le p\le\infty,\ \ \text{when}\ \alpha>\frac{2-d}p.
\end{align}
However, the range $\alpha$ in \eqref{Srangealpha} is not optimal, see \cite{LSSY24,MYZ17}.
\par
When $\alpha=1$, $\mathscr M^\alpha$ corresponds to the Hardy-Littlewood maximal operator associated to the Euclidean ball, \eqref{sphmmalpha} holds for all $1<p\le\infty$, one can replace the bound $C_{p,d,1}$ by a bound $C_p$ independent of the dimension $d$, see \cite{S82,SS83}. For $\alpha\in(0,1)$, $\left(1-|y|^2\right)^{\alpha-1}$ and $-\ln\left(1-|y|^2\right)$ blow up in $y$ near the unit sphere $\mathbb S^{d-1}$, and there is a $\varepsilon_\alpha>0$ such that
\begin{align*}
\left(1-|y|^2\right)^{\alpha-1}>-\ln\left(1-|y|^2\right),\ \ 1-\varepsilon_\alpha<|y|<1.
\end{align*}
When $\alpha\in(0,1)$, \eqref{sphmmalpha} holds for $p>\frac{d}{d+\alpha-1}$. The range of $p$ in \eqref{sphmmalpha} approches to $(1,\infty]$ as $\alpha$ goes to $1$. 
\par
A natural question can be raised: Is $M_\psi$ bounded on $L^p(\mathbb R^d)$ for $1<p\le\infty$? Does $M_\psi$ have $L^p$ bounds independent of the dimension $d$? Answers are provided by the following result.
\begin{theorem}\label{dfppe}
Let $d\in\mathbb N$. For each $1<p\le\infty$, there is a constant $C_p$ independent of the dimension $d$ such that
\begin{align*}
\|M_\psi f\|_{L^p(\mathbb R^d)}\le C_p\|\psi\|_{L^1(\mathbb R^d)}\|f\|_{L^p(\mathbb R^d)}.
\end{align*}
\end{theorem}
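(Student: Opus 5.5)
The plan is to reduce $M_\psi$ to averages that already have dimension-free bounds. Write $\psi$ in polar coordinates and expand the logarithm,
\begin{align*}
\psi(x)=-\ln(1-|x|^2)=\sum_{k\ge1}\frac{|x|^{2k}}{k}\,\chi_{B(0,1)}(x).
\end{align*}
This gives $\psi_t\ast f=\sum_{k\ge1}\frac1k\,\|\phi^{(k)}\|_{L^1}\,(\widetilde\phi^{(k)})_t\ast f$, where $\phi^{(k)}=|x|^{2k}\chi_{B(0,1)}$ and $\widetilde\phi^{(k)}=\phi^{(k)}/\|\phi^{(k)}\|_{L^1}$. Since $\|\phi^{(k)}\|_{L^1}=|B(0,1)|\,\frac{d}{d+2k}$, we get
\begin{align*}
\|\psi\|_{L^1}=|B(0,1)|\sum_{k\ge1}\frac{d}{k(d+2k)}.
\end{align*}
Sublinearity then gives $M_\psi f\le\sum_k\frac1k\|\phi^{(k)}\|_{L^1}M_{\widetilde\phi^{(k)}}f$. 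So the theorem follows once I show
\begin{align*}
\|M_{\widetilde\phi^{(k)}}f\|_{L^p(\mathbb R^d)}\le C_p\|f\|_{L^p(\mathbb R^d)}\quad\text{uniformly in } k\ge1 \text{ and } d\ge1.
\end{align*}
For $p=\infty$ this is trivial.

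\emph{Key observation.} The radial density of $\widetilde\phi^{(k)}$ is $c\,r^{2k+d-1}$ on $(0,1)$. This is exactly the law of $|Y|$ for $Y$ uniform in the unit ball of $\mathbb R^{n}$, $n=d+2k$. Hence $(\widetilde\phi^{(k)})_t\ast f(x)$ is an average over $r\in(0,1)$, with this law, of the spherical means $\sigma^{(d)}_{rt}\ast f(x)$ in $\mathbb R^d$. I will use Stein's method of rotations / lifting, as in the dimension-free proof for the ball maximal function \cite{S82,SS83}, to compare these averages with ball averages in the higher dimension $n$. Concretely, extend $f$ to $F(x,y)=f(x)$ on $\mathbb R^d\times\mathbb R^{2k}$ and average over rotations of $\mathbb R^n$ mixing the two factors. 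The goal is to dominate $M_{\widetilde\phi^{(k)}}f$, in $L^p$, by a dimension-free maximal operator that Stein--Str\"omberg control uniformly in the dimension: the Hardy--Littlewood ball maximal operator, or the spherical maximal operator in dimension $\ge$ a large threshold, which is bounded on $L^p$ for $p>n/(n-1)$ with uniform constants.

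\emph{Main obstacle.} The difficulty is $p$ close to $1$, uniformly in $k$. As $k\to\infty$, $\widetilde\phi^{(k)}$ concentrates on $\mathbb S^{d-1}$, so $M_{\widetilde\phi^{(k)}}$ degenerates toward the spherical maximal function in $\mathbb R^d$, which is unbounded for $p\le d/(d-1)$. I would handle this by splitting the sum at $k\approx d$.
\begin{itemize}
\item For $k\lesssim d$, the mass of $\widetilde\phi^{(k)}$ sits at radii $r$ with $1-r\gtrsim 1/d$, comparably to a ball. Here I would prove $\widetilde\phi^{(k)}\le C\,\chi_{B(0,1)}/|B(0,1)|$ plus a part controlled by the ball maximal function at slightly larger scale, aiming for a uniform constant.
\item For $k\gg d$, the coefficients $\frac1k\cdot\frac d{d+2k}\approx\frac d{2k^2}$ are summable with total $\approx 1$ relative to $\|\psi\|_{L^1}/|B(0,1)|\approx\ln d$. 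This part is not negligible, so I would instead use the representation through the spherical means of $\mathbb R^{d+2k}$, where $p>(d+2k)/(d+2k-1)$ becomes harmless as $k$ grows.
\end{itemize}
The remaining finitely many small values of $p$ and small $d$ (say $d\le d_0(p)$) cost only a $p$-dependent constant. For these, Stein's result \eqref{sphmmalpha} together with $\psi\le C_\alpha(1-|y|^2)^{\alpha-1}$ for $\alpha$ close to $1$ gives boundedness with constants depending only on $p$.

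I expect the lifting/rotation comparison to be the delicate step: proving that the $\mathbb R^d$-averages with radial law $r^{n-1}$ are dominated, with dimension-free constants, by $n$-dimensional maximal operators, uniformly in $k$.
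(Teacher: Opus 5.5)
\textbf{Gap: the large-$k$ (tail) step.} You flag this step as delicate yourself, and it cannot be done the way you describe. Lifting $f$ to $F(x,y)=f(x)$ on $\mathbb R^d\times\mathbb R^{2k}$ and averaging over balls or spheres of $\mathbb R^{d+2k}$ only produces, on $\mathbb R^d$, the marginal kernels $c(1-|u|^2)^{k}$ or $c(1-|u|^2)^{k-1}$. These concentrate near the origin, not near $\mathbb S^{d-1}$ as $|u|^{2k}$ does. There is no identity turning $\mathbb R^d$-spherical means weighted by $nr^{n-1}\,dr$ into $n$-dimensional means of a lift.

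More importantly, your heuristic that $p>(d+2k)/(d+2k-1)$ becomes harmless as $k$ grows is backwards. Since $nr^{n-1}\,dr\to\delta_1$, for nonnegative $f\in C_c(\mathbb R^d)$ one has $\sigma_t\ast f=\lim_k(\widetilde\phi^{(k)})_t\ast f$. Hence $\mathscr M_d f\le\liminf_k M_{\widetilde\phi^{(k)}}f$ pointwise. By Fatou and monotone approximation, any bound for $M_{\widetilde\phi^{(k)}}$ on $L^p(\mathbb R^d)$ that is uniform in $k$ would make $\mathscr M_d$ bounded on $L^p(\mathbb R^d)$, which fails for $d\ge2$ and $p\le d/(d-1)$. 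So the relevant threshold is governed by $d$, not by $d+2k$. In particular, your reduction to a bound uniform in $k\ge1$ and $d\ge1$ is false as stated. Any correct treatment of the tail must use that $d$ is large compared with $p/(p-1)$.

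\textbf{The missing idea.} Once you use that, the difficulty disappears, and the series expansion is not needed. Since $\psi$ is radial and nonnegative, $\psi_t\ast f$ is a superposition of spherical means, so $M_\psi f\le\|\psi\|_{L^1(\mathbb R^d)}\mathscr M_d|f|$ pointwise. The dimension-free bound for $\mathscr M_d$ comes from descending in dimension rather than lifting. Fix an integer $k_p\ge3$ with $k_p>p/(p-1)$. The identity
\[
\int_{\mathbb S^{d-1}}g\,d\sigma=\int_{\mathcal O(d)}\int_{\mathbb S^{k_p-1}}g\big(\tau(\widetilde{y_1}',0)\big)\,d\sigma(\widetilde{y_1}')\,d\tau
\]
writes each spherical mean in $\mathbb R^d$ as a rotation average of spherical means along $k_p$-planes. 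Fubini and Stein's theorem in the fixed dimension $k_p$ then give $\|\mathscr M_d\|_{L^p\to L^p}\le C(k_p,p)$ for all $d>k_p$. This is exactly the paper's Proposition \ref{MpsicontrbyMkt} together with \eqref{Mktaupp}.

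\textbf{Remaining parts of your proposal.} Your $k\lesssim d$ estimate $\widetilde\phi^{(k)}\le(1+2k/d)\chi_{B(0,1)}/|B(0,1)|$ is correct but superfluous. Your treatment of $d\le d_0(p)$, via $\psi\le C_\alpha(1-|y|^2)^{\alpha-1}$ and Stein's bounds for $\mathscr M^\alpha$, is a valid substitute for the paper's domination $M_\psi f\le C4^d\nu_d\ln d\,M^2f$. Even more cheaply, H\"older's inequality with $\psi\in L^q(B(0,1))$ for every $q<\infty$ gives fixed-dimension $L^p$ bounds for all $p>1$.
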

One may wonder what may happen in the limiting case $p=1$, we answer this question by the next result.
\begin{theorem}\label{notw11}
$M_\psi$ is not of weak type $(1,1)$, i.e. for any positive constant $C$, there exists a function $f\in L^1(\mathbb R^d)$ such that
\begin{align*}
\|M_\psi f\|_{L^{1,\infty}(\mathbb R^d)}\ge C\|f\|_{L^1(\mathbb R^d)}.
\end{align*}
However, $M_\psi$ is of weak type $L\log L$, i.e.
there is a positive constant $C$ independent of the dimension $d$ such that
\begin{align}\label{epe}
\left|\left\{x\in\mathbb R^d:M_\psi f(x)>\lambda\right\}\right|\le C\int_{\mathbb R^d}\Phi\left(\frac{|f(x)|}\lambda\right)dx,\ \ \lambda>0,
\end{align}
where $\Phi(s)=s\left(1+\log^+s\right)$.
\end{theorem}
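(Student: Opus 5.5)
The plan has two parts: a counterexample showing that $M_\psi$ fails the weak $(1,1)$ bound, and a layer-cake reduction for the weak $L\log L$ estimate \eqref{epe}.

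\emph{Failure of weak type $(1,1)$.} Take $f$ to be a Dirac-like mass: $f_\varepsilon=|B(0,\varepsilon)|^{-1}\chi_{B(0,\varepsilon)}$, with $\|f_\varepsilon\|_{L^1}=1$. For $|x|$ large and $t$ close to $|x|$, the mean $\psi_t\ast f_\varepsilon(x)$ is essentially $t^{-d}\psi(x/t)$. The logarithmic singularity at the sphere is integrable, so for $\varepsilon\to0$ we have $M_\psi f_\varepsilon(x)\to\sup_{t>|x|}t^{-d}\bigl(-\ln(1-|x|^2/t^2)\bigr)$. Substituting $s=|x|/t\in(0,1)$, this equals $|x|^{-d}\sup_{s<1}s^d(-\ln(1-s^2))=+\infty$. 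Thus the limiting function is infinite everywhere, which rules out any weak $(1,1)$ bound.

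To make this quantitative, fix $\varepsilon$. Choosing $t$ with $|x|/t=1-\varepsilon/|x|$ gives $M_\psi f_\varepsilon(x)\gtrsim |x|^{-d}\ln(|x|/\varepsilon)$ up to dimensional constants. So $\{M_\psi f_\varepsilon>\lambda\}$ contains the ball of radius $R$ determined by $R^{-d}\ln(R/\varepsilon)\approx\lambda$. Then $\lambda|\{M_\psi f_\varepsilon>\lambda\}|\gtrsim \ln(R/\varepsilon)\to\infty$ as $\varepsilon\to0$, while $\|f_\varepsilon\|_1=1$. This proves the first claim; fixing $d$ is enough here.

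\emph{Weak $L\log L$.} Decompose $\psi$ by level sets. Set $A_k=\{x:1-2^{-k}\le |x|^2<1-2^{-k-1}\}$ for $k\ge0$. On $A_k$ we have $\psi\approx k+1$, so
\[
\psi\le \sum_{k\ge0}(k+2)\chi_{A_k}\le \sum_{k\ge 0}\chi_{B_k},\qquad B_k=\{|x|^2<1-2^{-k-1}\},
\]
up to a harmless constant. A cleaner layer-cake form is $\psi=\int_0^\infty \chi_{\{\psi>s\}}\,ds$, where $\{\psi>s\}$ is the annulus $\{1-e^{-s}<|x|^2<1\}$. Hence
\[
M_\psi f\le \int_0^\infty M_{\chi_{\{\psi>s\}}}|f|\,ds .
\]
The annulus $\{\psi>s\}$ is the unit ball minus a ball of radius $r_s=\sqrt{1-e^{-s}}$. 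Its measure is $|B|(1-r_s^d)$, and each annular average is bounded by $|B|$ times the Hardy–Littlewood ball maximal function $M$. This crude bound gives $M_\psi f\lesssim \int_0^\infty \min(1,\ldots)\,ds$, which is not summable on its own.

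The remedy is to combine, for each $s$, the weak $(1,1)$ bound and the $L^\infty$ bound with constants independent of $d$. The key point is that the annular maximal operator for the annulus $\{\psi>s\}$ is pointwise at most $|B| M$, with norm constant $\le |B|$, and is trivially bounded on $L^\infty$ with norm equal to the measure of the annulus, which decays as $d e^{-s}$ for large $s$. Split $f=f\chi_{|f|\le\lambda}+\sum_j f\chi_{2^j\lambda<|f|\le 2^{j+1}\lambda}$. On the piece where $|f|\sim2^j\lambda$, truncate the $s$-integral at $s\approx j$. The part $s\le j$ contributes weak $(1,1)$ with constant $\lesssim j\|\psi\|_1$-scaled, while the tail $s>j$ has $L^\infty$ norm $\lesssim 2^{-j}$ times $2^{j}\lambda$, which is less than $\lambda/2$ after adjusting constants. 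Summing over $j$ gives $\sum_j (1+j)\lambda^{-1}\int_{|f|\sim 2^j\lambda}|f|$, which is exactly $\int\Phi(|f|/\lambda)$. Dimension-freeness of the weak $(1,1)$ ball maximal bound is \emph{not} known (Stein–Strömberg only gives $O(d)$). So I would instead run this with the radial structure: the weak-type ingredient is Stein–Strömberg's $O(d\log d)$ or better, and the dimension must be absorbed against $\|\psi\|_1$ and the $e^{-s}$ decay. I expect that absorption to be the main obstacle.
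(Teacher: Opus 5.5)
Your counterexample for the failure of weak type $(1,1)$ is correct, and it takes a different and simpler route than the paper's. The paper fixes the single function $f=\chi_{B(0,1)}$, proves $M_\psi f(x)\ge A\ln(|x|^d)/|x|^d$ for $|x|\ge C_0$, and uses bounds for the Lambert $W$ function to get $\|M_\psi f\|_{L^{1,\infty}}=\infty$. You instead let $f_\varepsilon$ approximate a Dirac mass and use the blow-up of $\psi$ on the sphere directly. Your qualitative remark alone already suffices: for $x\neq0$ and $t\neq|x|$ the function $\psi_t$ is continuous at $x$, so $\liminf_{\varepsilon\to0}M_\psi f_\varepsilon(x)\ge\sup_{t\neq|x|}\psi_t(x)=\infty$, and Fatou's lemma gives $|\{M_\psi f_\varepsilon>\lambda\}|\to\infty$ for each fixed $\lambda$.

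\textbf{The weak $L\log L$ part has a genuine gap.} You leave the dimension-independence open and call it the main obstacle. The missing observation is elementary. The factor $|B(0,1)|=\nu_d$, which you isolated yourself in $M_{\chi_{\{\psi>s\}}}\le\nu_d M$, decays superexponentially. It therefore absorbs any weak $(1,1)$ constant of $M$ that grows at most exponentially: $O(d)$ by Stein--Str\"omberg, or even $3^d$ from a Vitali covering. For instance, $d\nu_d=\omega_{d-1}\le 34$ for all $d$. This is exactly the mechanism behind the paper's dimension-free claim. The paper proves $M_\psi f\le C4^d\nu_d\ln d\,M^2f$, using the Orlicz H\"older inequality with $\|\psi-1\|_{\exp L,B(0,1)}\lesssim\ln d$ and $M_{L\log L}\lesssim 4^dM^2$. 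It then combines this with $|\{M^2f>\lambda\}|\le C4^d\int\Phi(|f|/\lambda)$ and notes that $16^d\nu_d\ln d$ is bounded in $d$.

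\textbf{Two steps of your sketch also do not work as written.}

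First, the tail bound ``$2^{-j}$ times $2^j\lambda$'' gives a contribution of size $\lambda$ for every $j$, which is not summable. Use instead $\|(\psi-s)_+\|_{L^1}=\int_s^\infty|\{\psi>u\}|\,du\le\omega_{d-1}e^{-s}$ and truncate at $s_j=j+s_0$. You must also split off the low part at level $c\lambda$ with $c\|\psi\|_{L^1}\le\frac14$, since $\|\psi\|_{L^1}$ is not small in low dimensions (it equals $\pi$ for $d=2$).

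Second, you cannot add weak-type bounds over infinitely many $j$, because $L^{1,\infty}$ is not normable. Moreover, once the supremum sits inside the $s$-integral, as in $M_\psi f\le\int_0^\infty M_{\chi_{\{\psi>s\}}}|f|\,ds$, you are left with $\sum_j s_jMf_j$ rather than a single maximal function. Do the splitting at fixed $t$ instead. Set $f_j=f\chi_{\{2^jc\lambda<|f|\le 2^{j+1}c\lambda\}}$ and $g=\sum_j s_j|f_j|$, and use $\min\{\psi,s_j\}\le s_j\chi_{B(0,1)}$. This gives
\[
\psi_t*|f|(x)\le c\lambda\|\psi\|_{L^1}+(\chi_{B(0,1)})_t*g(x)+2c\lambda\,\omega_{d-1}e^{-s_0}\sum_{j\ge0}(2/e)^j .
\]
For suitable universal $c$ and $s_0$ this yields $\{M_\psi f>\lambda\}\subseteq\{\nu_dMg>\lambda/2\}$. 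A single weak $(1,1)$ estimate, together with $g\lesssim|f|(1+\log^+(|f|/\lambda))$, then finishes the proof.

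With these repairs, your argument is a valid and more elementary alternative to the paper's Orlicz-space route. It uses only the exponential decay of the distribution function of $\psi$ (the same information as $\psi\in\exp L$) and one weak $(1,1)$ bound for $M$. The paper instead needs $M_{L\log L}\lesssim M^2$ and the $L\log L$ estimate for $M^2$.
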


Let $\mu$ be a compactly supported finite Borel measure, the maximal operator and the lacunary maximal operator are given by 
\begin{align*}
M_\mu f(x)=\sup_{t>0}\int_{\mathbb R^d}|f(x-ty)d\mu(y)|
\end{align*}
and
\begin{align*}
M^D_\mu f(x)=\sup_{k\in\mathbb Z}\int_{\mathbb R^d}|f(x-2^ky)d\mu(y)|,
\end{align*}
respectively. Obviously, $M_\mu$ controls $M^D_\mu$.  For many singular measures it has been observed that $M_\mu$ does not satisfy some estimates but $M^D_\mu$ does. For instance, when $\mu$ is the normalized Lebesgue measure on the sphere $\mathbb S^{d-1}$, $M_\mu$ is the spherical maximal function, which is not bounded on $L^p(\mathbb R^d)$ for $1<p\le\frac d{d-1}$, the lacunary spherical maximal function is bounded from $L^p(\mathbb R^d)$ to  $L^p(\mathbb R^d)$ for all $1<p\le\infty$ when $d\ge2$, see \cite{S76} and \cite{Calderon79}. Theorem \ref{notw11}  implies that $M_\psi$ does not map $L^1(\mathbb R^d)$ to $L^{1,\infty}(\mathbb R^d)$. Whether the corresponding lacunary maximal operator $M_\psi^D$ maps $L^1(\mathbb R^d)$ to $L^{1,\infty}(\mathbb R^d)$? We obtain the following result.
\begin{theorem}\label{dyad w11}
$M^D_\psi$ is  of weak type $(1,1)$, i.e. there is a positive constant $C_d$ dependent on the dimension $d$ such that
\begin{align*}
\lambda\left|\left\{x\in\mathbb R^d:M^D_\psi f(x)>\lambda\right\}\right|\le C_d\|f\|_{L^1(\mathbb R^d)},\ \ \lambda>0.
\end{align*}
\end{theorem}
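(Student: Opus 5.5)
We prove Theorem \ref{dyad w11} by splitting the kernel into a piece that is smooth near the sphere and a piece concentrated in the thin shell where $\psi$ blows up. Since $\psi\ge0$, we may assume $f\ge0$. Decompose dyadically in the distance to $\mathbb S^{d-1}$: for $j\ge1$ let $A_j=\{x:1-2^{-j+1}\le|x|<1-2^{-j}\}$ and $A_0=\{|x|<1/2\}$. On $A_j$ we have $\psi\approx j$, so
\begin{align*}
\psi\le C\chi_{B(0,1)}+C\sum_{j\ge1} j\,\chi_{A_j}.
\end{align*}
The first term contributes at most $C_d Mf$, which is weak $(1,1)$. Hence it suffices to treat $\sum_j j\,\sup_k (\chi_{A_j})_{2^k}\ast f$. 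Each $A_j$ is an annulus of width $2^{-j}$ and measure $\approx d\,2^{-j}$. Because $\sum_j j2^{-j}<\infty$, this reduces the problem to a weak $(1,1)$ bound for lacunary maximal averages over thin annuli $\sigma_j=|A_j|^{-1}\chi_{A_j}$, whose constant grows at most like $j^{-1-\epsilon}2^{j}$ relative to the weights. That growth is summable after the weights are accounted for, but quasi-triangle inequalities in $L^{1,\infty}$ make summing delicate, so we instead argue directly via Calderón--Zygmund.

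\textbf{Calderón--Zygmund argument.} Fix $\lambda$ and perform the Calderón--Zygmund decomposition $f=g+\sum_Q b_Q$ at height $\lambda$, with cubes $Q$ of side $\ell(Q)$. The good part is handled by Theorem \ref{dfppe} with $p=2$, since $M^D_\psi\le M_\psi$. Discard $\bigcup 5Q$. For each bad cube $Q$ and each $x\notin 5Q$, only scales $2^k\gtrsim\ell(Q)$ matter. Using the mean-zero property of $b_Q$, we estimate
\begin{align*}
\sum_k\int_{(5Q)^c}|\psi_{2^k}\ast b_Q|\,dx .
\end{align*}
Split the scales into two regimes.
\begin{itemize}
\item[(a)] For $2^{k}2^{-j}\ge\ell(Q)$, the annular piece $j\chi_{A_j}$ at scale $2^k$ varies on a length scale larger than $Q$. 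Smoothing $\chi_{A_j}$ at width $2^{-j}$ and using the cancellation of $b_Q$ gives a bound $\lesssim_d j\,\ell(Q)2^{-k+j}\|b_Q\|_1\cdot 2^{-j}$.
\item[(b)] For $2^{k}2^{-j}<\ell(Q)$, we simply use $\|(j\chi_{A_j})_{2^k}\ast b_Q\|_1\le j|A_j|\|b_Q\|_1\lesssim_d j2^{-j}\|b_Q\|_1$, with no cancellation.
\end{itemize}
Sum over $k\ge\log_2\ell(Q)$. In regime (a) the geometric series in $k$ yields $\lesssim j2^{-j}\|b_Q\|_1$ per $j$. In regime (b) the admissible $k$ satisfy $\ell(Q)\le2^k<2^j\ell(Q)$, so there are at most $j$ of them, giving $\lesssim j^2 2^{-j}\|b_Q\|_1$. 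Summing over $j$ gives $\sum_j j^22^{-j}<\infty$. Hence $\sum_k\|\psi_{2^k}\ast b_Q\|_{L^1((5Q)^c)}\le C_d\|b_Q\|_1$, and Chebyshev finishes the proof.

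\textbf{Main obstacle.} The key step is the regime-(b) count. It is exactly the lacunarity that limits the number of scales in the window $[\ell(Q),2^j\ell(Q)]$ to $j$ rather than a continuum. For the full operator $M_\psi$ this window carries a $\log$-divergent contribution, consistent with Theorem \ref{notw11}. I would check carefully that the regime-(a) smoothing error is controlled by the Lipschitz constant $2^{j}$ of the smoothed annulus, rather than something worse; constants there may depend on $d$, which is allowed.
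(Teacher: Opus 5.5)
Your argument is correct, and it takes a different route from the paper's.

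\textbf{The paper's route.} The paper passes to $\varphi(y)=-\ln(1-|y|)\chi_{B(0,1)}(y)$ and invokes the vector-valued Calder\'on--Zygmund theorem for sublinear maximal operators, using the $L^p$ bound from Theorem \ref{ppe}. It then verifies the $\ell^\infty$-valued H\"ormander condition $\sup_{y\neq0}\int_{|x|\ge2|y|}\sup_j|\varphi_{2^j}(x-y)-\varphi_{2^j}(x)|\,dx<\infty$. To do so, it splits each dyadic shell into four regions and determines, via monotonicity in $j$ and comparisons between neighbouring scales, exactly which scale attains the supremum.

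\textbf{Your route.} You run the Calder\'on--Zygmund decomposition by hand and replace $\sup_k$ by $\sum_k$. You then prove the stronger $\ell^1$-in-scales bound $\sum_k\|K_{2^k}\ast b_Q\|_{L^1((5Q)^c)}\le C_d\|b_Q\|_1$ for the annular majorant $K$. The inputs are the elementary estimate $\|\chi_{A_j}(\cdot-v)-\chi_{A_j}\|_{L^1}\le C_d\min\{|v|,2^{-j}\}$ and the count of roughly $j$ lacunary scales in the window where cancellation gives nothing. In effect you are using the Dini-type bound $\|\psi(\cdot-v)-\psi\|_{L^1}\lesssim_d|v|\log(e/|v|)$. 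This is summable over dyadic scales but not over a continuum, which matches Theorem \ref{notw11}.

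\textbf{What each buys.} Your approach avoids all the pointwise case analysis and shows exactly where lacunarity enters. It also applies verbatim to $\phi$ in Theorem \ref{dyad w11phi}. The paper's computation controls the genuine supremum rather than the sum and yields an explicit H\"ormander constant.

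\textbf{Small repairs.}
\begin{itemize}
\item Once you pass to $K=C\sum_j j\chi_{A_j}$, the good part needs the $L^2$ bound for $\sup_k K_{2^k}\ast g$, not for $M^D_\psi$. Since $1-|x|^2\le 2^{-j+2}$ on $A_j$, you have $\psi\ge(j-2)\ln 2$ there. Hence $K\le C(\psi+\chi_{B(0,1)})$, and Theorem \ref{dfppe} together with the boundedness of $M$ applies, using $g\ge0$.
\item For the same reason, your final line should read $K_{2^k}$ rather than $\psi_{2^k}$.
\item No smoothing is needed in regime (a). Writing $A_j$ as a difference of two balls gives $\|\chi_{A_j}(\cdot-v)-\chi_{A_j}\|_{L^1}\le C_d|v|$ uniformly in $j$. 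Literally mollifying $\chi_{A_j}$ would create an error of size $|A_j|$ that your regime-(a) bound does not absorb, unless the smoothed function is itself taken as a majorant.
\item The opening paragraph about weak-type constants $j^{-1-\epsilon}2^j$ for annular averages plays no role and can be dropped.
\end{itemize}
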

\par
Dimension-free estimates are useful when one is interested in generalizing results from the finite dimensional to the infinite dimensional setting. The starting point of research was the work of Stein \cite{S82}. Let $G$ be a non-empty convex symmetric body in $\mathbb R^d$,  the Hardy-Littlewood maximal operator $M_G$ associated to convex set $G$ is defined by
\begin{align*}
M_Gf(x)=\sup_{t>0}\frac1{|G_t|}\int_{G_t}f(x-y)dy, \ \ f\in L_{loc}^1(\mathbb R^d),
\end{align*}
where $G_t=\{y\in\mathbb R^d:t^{-1}y\in G\}$. Given $1\le q<\infty$, $\|x\|_q:=(|x_1|^q+|x_2|^q+\cdots+|x_d|^q)^{1/q}$, and $\|x\|_\infty:=\max_{1\le i\le d}\{|x_i|\}$. For $1\le q\le\infty$, define balls $B_q=\{x\in \mathbb R^d:\|x\|_q\le1\}$. In \cite{S82}, Stein obtained the dimension-free bounds on $L^p(\mathbb R^d)$ for $M_{B_2}$ with $p\in(1,\infty]$. Stein's result was extended to $M_G$ with general symmetric convex body $G$ by Bourgain \cite{B86a,B86b} and Carbery \cite{Car86} when $p\in(3/2,\infty]$. The question about the dimension-free bounds for general symmetric convex body $G$ for $p\in(1,3/2]$ remains still open. Specially, for $G=B_q$, $1\le q<\infty$, M\"{u}ller \cite{M90} proved dimension-free estimates on $L^p(\mathbb R^d)$ for $p\in(1,\infty]$. For $G=B_\infty$,  the question about dimension-free bounds on $L^p(\mathbb R^d)$ for $M_{B_\infty}$ with $(1,3/2]$ was left open until 2014,  Bourgain \cite{B14} showed that dimension-free estimates on $L^p(\mathbb R^d)$ hold for all $p\in(1,\infty]$. Dimension-free estimates in Theorem \ref{dfppe} may be regarded as generalizations for dimension-free $L^p$ estimates for the Hardy-Littlewood maximal operator associated to convex sets. 
\par
Stein raise a general question in \cite{S83} that ''Can one find an appropriate infinite-dimensional formation of (that part of ) harmonic analysis in $\mathbb R^d$, which displays in a natural way the above uniformity in $d$?''. In fact, dimension-free $L^p$ estimates also hold for Riesz transforms. 
\par
For $j=1,2,\cdots,d$, let $K^j$ be the kernel
 \begin{align*}
 K^j(x)=c_d\chi_{\{|x|>1\}}(x)\frac{x_j}{|x|^{d+1}}\ \ \text{with}\ \ c_d=\frac{\Gamma(\frac{d+1}2)}{\pi^{\frac{d+1}2}}.
 \end{align*}
The $j$th Riesz transform $R_j$ is given by
\begin{align*}
R_jf(x)=\lim_{t\rightarrow0^+}K^j_t\ast f(x),\ \ f\in\mathscr S(\mathbb R^d).
\end{align*}
In \cite{S83}, Stein proved that 
\begin{align*}
\left\|\left(\sum_{j=1}^d|R_jf|^2\right)^{1/2}\right\|_{L^p(\mathbb R^d)}\le C_p\|f\|_{L^p(\mathbb R^d)},\ \ 1<p<\infty,
\end{align*}
with $C_p$ independent of $d$. The analogue has been also proved for higher order Riesz transforms, see \cite[Th\'{e}or\`{e}me 2]{DR85}.
\par
The maximal truncated Riesz transform $R_j^\ast$ and the lacunary maximal truncated Riesz transforms  are defined by
\begin{align*}
R_j^\ast f(x)=\sup_{t>0}|K^j_t\ast f(x)|
\end{align*}
and 
\begin{align*}
R_j^{\ast,D} f(x)=\sup_{k\in\mathbb Z}|K^j_{2^k}\ast f(x)|,
\end{align*}
respectively. In \cite{MV06}, Mateu and Verdera showed that there exists a constant $C_{p,d}$ such that
\begin{align*}
\|R_j^\ast f\|_{L^p(\mathbb R^d)}\le C_{p,d}\|R_jf\|_{L^p(\mathbb R^d)}
\end{align*}
for all $f\in L^p(\mathbb R^d)$ with $1<p<\infty$. Recently, using Fourier transform estimates and square function techniques, Kucharski and Wr\'{o}bel \cite{KW23} improved above estimates for $p=2$ and obtained the following dimension-free $L^2$ estimate
\begin{align}\label{dfRj}
\|R_j^\ast f\|_{L^2(\mathbb R^d)}\le 2\cdot 10^8\|R_jf\|_{L^2(\mathbb R^d)},\ \  j=1,2,\cdots,d.
\end{align}
 Later, applying the complex method of rotations of Iwaniec and Martin \cite{IM96}, Kucharski, Wr\'{o}bel and Zienkiewicz obtained dimension-free $L^p$ estimates for the vector of  maximal Riesz transforms for all orders in terms of Riesz transforms with $1<p<\infty$, which includes the case when Riesz transforms are first order, see \cite{KWZ22,KWZ23}. Very recently, Liu, Melentijevi\'{c} and Zhu \cite{LMZ24} proved \eqref{dfRj} also holds for $p\ge2$ and provided dimension-free $L^p$ bounds which are not greater than $(2+1/\sqrt2)^{2/p}$.
\par
Applying dimension-free $L^p$ estimates and endpoint estimates for $M_\psi$, we provide new proofs  of dimension-free $L^p$ estimates and endpoint estimates for the maximal Riesz transform in terms of the Riesz transform.
\begin{theorem}\label{Rdfppe}
Let $d\in\mathbb N$. For each $1<p<\infty$, there is a constant $C_p$ independent of the dimension $d$ such that
\begin{align*}
\| R^\ast_jf\|_{L^p(\mathbb R^d)}\le C_p\|R_jf\|_{L^p(\mathbb R^d)},\ j=1,2,\cdots,d.
\end{align*}
\end{theorem}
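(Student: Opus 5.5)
The plan is to follow the Mateu--Verdera philosophy: write the truncated Riesz kernel as the Riesz transform of a radial, compactly supported, integrable function, and then dominate $R_j^\ast f$ by maximal operators applied to $R_jf$. Concretely, I would look for a radial function $\varphi$ supported in the closed unit ball such that
\begin{align*}
K^j=R_j\varphi,\qquad j=1,\dots,d,
\end{align*}
in the sense of distributions, with $\varphi$ of the form
\begin{align*}
\varphi=a_d\chi_{B(0,1)}+b_d\,\psi,\qquad \psi(x)=-\ln(1-|x|^2).
\end{align*}
The appearance of $\psi$ in the introduction strongly suggests that such a decomposition holds.

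Given such an identity, dilation invariance of $R_j$ gives $K^j_t=R_j(\varphi_t)$. Since $R_j$ commutes with convolution,
\begin{align*}
K^j_t\ast f=\varphi_t\ast R_jf .
\end{align*}
Hence
\begin{align*}
R_j^\ast f\le |a_d|\,|B(0,1)|\,M_{B_2}(R_jf)+|b_d|\,M_\psi(R_jf),
\end{align*}
where we use $\chi_{B(0,1)}=|B(0,1)|\cdot|B(0,1)|^{-1}\chi_{B(0,1)}$ and $M_\psi$ applied to $|R_jf|$. Stein's dimension-free bound for $M_{B_2}$ and Theorem \ref{dfppe} then give
\begin{align*}
\|R_j^\ast f\|_{L^p}\le C_p\bigl(|a_d|\,|B(0,1)|+|b_d|\,\|\psi\|_{L^1(\mathbb R^d)}\bigr)\|R_jf\|_{L^p}.
\end{align*}
So the theorem reduces to showing that $|a_d|\,|B(0,1)|$ and $|b_d|\,\|\psi\|_{L^1}$ are bounded uniformly in $d$.

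To find $\varphi$, I would use $K^j=\partial_j G$ with $G$ radial, $G(x)=\frac{c_d}{1-d}|x|^{1-d}$ for $|x|>1$, and $G$ constant (equal to $\frac{c_d}{1-d}$) for $|x|\le1$, so that $G$ is continuous and $\partial_jG=K^j$ with no singular part. Since $R_j=\partial_j(-\Delta)^{-1/2}$ up to sign, it suffices to take
\begin{align*}
\varphi=(-\Delta)^{1/2}G \quad\text{(up to sign).}
\end{align*}
This can be computed through the Poisson extension, or via the Fourier transform $\widehat\varphi(\xi)=|\xi|\widehat G(\xi)$. Outside the ball $G$ is a multiple of the Riesz potential $I_1\delta$, so $(-\Delta)^{1/2}G$ vanishes for $|x|>1$, which gives the support property. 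Inside the ball I would compute the principal-value integral
\begin{align*}
(-\Delta)^{1/2}G(x)=C\,\mathrm{p.v.}\!\int\frac{G(x)-G(y)}{|x-y|^{d+1}}\,dy,\qquad |x|<1,
\end{align*}
by reducing to a one-dimensional radial integral. I expect the result to be an explicit combination of a constant and $-\ln(1-|x|^2)$, with a boundary blow-up of logarithmic type. As a consistency check, $\widehat\varphi(0)=\int\varphi$ must match the normalization forced by $\widehat{K^j}$.

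I expect the main obstacle to be this explicit computation, and in particular tracking the constants $a_d,b_d$ sharply enough to see dimension-free control. For this I would use the normalizations $c_d=\Gamma(\frac{d+1}2)\pi^{-\frac{d+1}2}$ and $|B(0,1)|=\pi^{d/2}/\Gamma(\frac d2+1)$, and estimate $\|\psi\|_{L^1}$ via the expansion
\begin{align*}
\psi(x)=\sum_{k\ge1}\frac{|x|^{2k}}{k},
\end{align*}
which gives
\begin{align*}
\|\psi\|_{L^1}=|B(0,1)|\sum_{k\ge1}\frac{d}{k(2k+d)}\approx|B(0,1)|\log d .
\end{align*}
If the direct computation is messy, a fallback is to verify the identity on the Fourier side using Bessel functions. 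Another option is to check $\int\varphi$ and positivity, and then bound $\|\varphi\|_{L^1}$ directly: this suffices, because if $\varphi\ge0$ (or splits into the two nonnegative pieces above), then $R_j^\ast f\lesssim \|a_d\chi_B\|_{L^1}M_{B_2}(R_jf)+\|b_d\psi\|_{L^1}M_\psi(R_jf)$. Showing that these $L^1$ norms are $O(1)$ uniformly in $d$ is exactly what the argument needs.
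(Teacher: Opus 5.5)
Your overall strategy matches the paper's: write $K^j_t\ast f=\varphi_t\ast R_jf$ and bound maximal operators whose kernels dominate $\varphi$. The gap is the claim that $\varphi$ is supported in $\overline{B(0,1)}$ and equals $a_d\chi_{B(0,1)}+b_d\psi$; this claim is false.

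\textbf{Why the support claim fails.} You argue that $(-\Delta)^{1/2}G$ vanishes for $|x|>1$ because $G$ agrees there with a multiple of $|x|^{1-d}$. This ignores that $(-\Delta)^{1/2}$ is nonlocal. For $d\ge2$ write
\[
G=\frac{c_d}{1-d}|x|^{1-d}+g_0,\qquad g_0=\frac{c_d}{1-d}\bigl(1-|x|^{1-d}\bigr)\chi_{B(0,1)}\ge0 .
\]
The first term only produces a multiple of $\delta_0$. Take $\varphi=-(-\Delta)^{1/2}G$, the sign for which $\int\varphi=1$. Then for $|x|>1$
\[
\varphi(x)=C_d\int_{B(0,1)}\frac{g_0(y)}{|x-y|^{d+1}}\,dy>0 .
\]
This decays like $|x|^{-d-1}$. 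Since $g_0$ vanishes only linearly on the sphere, it also blows up like $\ln\frac1{|x|-1}$ as $|x|\to1^+$.

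\textbf{No other choice of $\varphi$ helps.} The relation $R_j\varphi=K^j$ determines $\widehat\varphi$ off $\{\xi_j=0\}$. So an integrable $\varphi$ is, up to sign, $\sum_jR_jK^j$: the Mateu--Verdera function $h$ of Section 4. Its explicit series is strictly positive for $|x|>1$. Inside the ball, $h$ is a multiple of ${}_2F_1(\frac12,\frac{d+1}2;\frac d2+1;|x|^2)$, which is only dominated by $a_d+b_d\psi$, not equal to it; that part is harmless.

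\textbf{Consequence.} The bound $R_j^\ast f\lesssim M_{B_2}(R_jf)+M_\psi(R_jf)$ is unjustified. It omits the exterior part of $h$, in particular its logarithmic singularity at $|x|=1^+$. Neither $\chi_{B(0,1)}$ nor $\psi$ can dominate that singularity, since both vanish off the ball, and it has no integrable radial decreasing majorant. Your fallback does not rescue this: positivity of $\varphi$ alone does not give domination by $M_{B_2}$ and $M_\psi$.

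\textbf{How the paper handles it.} The paper uses the explicit series for $h$. Gautschi's inequality gives $h\le h^0+h^1+h^2$, where:
\begin{itemize}
\item $h^0$ is radial decreasing with dimension-free mass;
\item $h^1$ is a multiple of $\psi$, handled by Theorem \ref{dfppe};
\item $h^2$ is supported outside the ball.
\end{itemize}
The piece $h^2$ is split further. Its tails are controlled by $M$. The remaining piece is controlled by the extra kernel $\phi=-\ln(|x|-1)\chi_{B(0,2)\setminus B(0,1)}$, via Proposition \ref{M3} and Theorem \ref{ppephi}. This exterior ingredient is what your plan is missing.

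\textbf{A shorter repair along your own lines.} Your nonlocal computation shows $\varphi>0$ everywhere. For $|x|<1$, $G$ attains its minimum on the ball, so $G(x)-G(y)$ has a fixed sign in the principal-value integral. For $|x|>1$, use the display above. Hence $\|\varphi\|_{L^1}=\widehat\varphi(0)=1$. Since $\varphi$ is radial, $\sup_{t>0}|\varphi_t\ast g|\le\mathscr M_d(|g|)$. The rotation argument of Section 3, with Lemma \ref{bounds for sph}, bounds $\mathscr M_d$ on $L^p(\mathbb R^d)$ uniformly for $d>k(p)$. The Mateu--Verdera constant $C_{p,d}$ covers the finitely many remaining dimensions.
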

In the limiting case $p=1$, we obtain the following results.
 \begin{theorem} \label{endpoint estimates Rj}
 Let $d\in\mathbb N$.  For every $1\le j\le d$ and constant $C>0$, there exists a function $f$ in $H^1(\mathbb R^d)$ such that
\begin{align*}
\|R^\ast_jf\|_{L^{1,\infty}(\mathbb R^d)}\ge C\|R_jf\|_{L^1(\mathbb R^d)}.
\end{align*}
However, there is a constant $C_d>0$ such that
\begin{align*}
\lambda\left|\left\{x\in\mathbb R^d:R^{D,\ast}_jf(x)>\lambda\right\}\right|\le C_d\|R_jf\|_{L^1(\mathbb R^d)},\ \ \lambda>0,
\end{align*}
and
\begin{align*}
\left|\left\{x\in\mathbb R^d:R^\ast_jf(x)>\lambda\right\}\right|\le C_d\int_{\mathbb R^d}\Phi\left(\frac{|R_jf(x)|}\lambda\right)dx,\ \ \lambda>0,
\end{align*}
where $\Phi(s)=s\left(1+\log^+s\right)$.
\end{theorem}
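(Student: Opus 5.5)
The strategy is to reduce everything to Theorems \ref{notw11} and \ref{dyad w11} through a pointwise identity expressing truncated Riesz kernels as Riesz transforms of dilates of $\psi$. Following Mateu--Verdera, I will look for a radial integrable function $\varphi$, supported in the unit ball, such that
\begin{align*}
K^j = R_j(\varphi),
\end{align*}
in the sense of distributions, or equivalently on the Fourier side $\widehat{K^j}(\xi)=-i\frac{\xi_j}{|\xi|}\widehat\varphi(\xi)$. Since $K^j=\partial_j$ of a radial function, namely $c\,\chi_{\{|x|>1\}}|x|^{1-d}$ up to constants, and $R_j = \partial_j(-\Delta)^{-1/2}$, one needs $\varphi=(-\Delta)^{1/2}$ applied to the truncated fundamental-type function $c\,\min(|x|,1)^{1-d}$ (after normalising so that the function coincides with $|x|^{1-d}$ outside the ball). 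I expect the computation to produce exactly a multiple of $\psi(x)=-\ln(1-|x|^2)$ restricted to the ball, with a dimensional constant $a_d$ and possibly a bounded correction term supported in the ball.

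The key step is therefore to verify the identity $K^j=a_dR_j(\psi)+R_j(\eta)$ with $\eta$ bounded and supported in $B(0,1)$, and with $\|a_d\psi\|_{L^1}+\|\eta\|_{L^1}$ controlled. This presumably mirrors the identity used to motivate the paper. Dilation invariance of $R_j$ then gives $K^j_t*f=(a_d\psi_t+\eta_t)*R_jf$, so pointwise
\begin{align*}
R_j^\ast f\le |a_d| M_\psi(R_jf)+M_\eta(R_jf)\le C\big(M_\psi(R_jf)+M(R_jf)\big),
\end{align*}
where $M$ is the Hardy--Littlewood maximal function; for the lacunary operator the same bound holds with $M_\psi^D$. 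The weak $(1,1)$ bound for $R_j^{D,\ast}$ then follows from Theorem \ref{dyad w11} and the weak $(1,1)$ bound for $M$, both with dimension-dependent constants. The $L\log L$ bound follows from \eqref{epe} and the fact that $M$ is weak $(1,1)$, hence also controlled by $\int\Phi(|R_jf|/\lambda)$. Splitting $\lambda$ into halves only changes constants.

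For the negative statement, I will take $f$ in $H^1$ so that $g=R_jf$ is an $L^1$ function. Then, near the singular boundary, $K^j_t*f$ equals $a_d\psi_t*g$ plus a maximal term that is weak-type bounded. Theorem \ref{notw11} gives $g\in L^1$ with $\|M_\psi g\|_{L^{1,\infty}}\ge C\|g\|_{L^1}$, but I need such $g$ to lie in the range $R_j(H^1)$. The plan is to use the explicit extremal functions from the proof of Theorem \ref{notw11}, presumably approximations of a point mass, and replace each by $g=R_jh$. Here $h$ is a smooth bump approximating $\delta$, and $R_jf$ is then $-R_j^2 h$-type, which is not quite right. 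Instead I will choose $f$ so that $R_jf$ approximates $\delta$ differences, e.g.\ $f=R_j(\phi_\epsilon-\phi_\epsilon(\cdot-e))$ with a mean-zero correction. This yields $R_jf=-R_j^2(\dots)$.

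This last step is the main obstacle. The lower bound must be extracted directly from the explicit kernel blow-up: $K^j_t*f$ near the sphere $|x|=t$ behaves like $\psi_t$ at a point mass. I would try $f$ equal to the difference of two narrow bumps, and compute $R_j^\ast f(x)\gtrsim |x|^{-d}\log(\ldots)$ on a large set, while $\|R_jf\|_{L^1}$ stays bounded. This uses the logarithmic divergence of the $\psi$-average over the shells $|x|\approx t$.
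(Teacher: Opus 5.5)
There is a genuine gap, and it sits in the step you call key: the identity $K^j=R_j(a_d\psi+\eta)$ with $\eta$ bounded and supported in $B(0,1)$ is false. An $L^1$ function $\varphi$ with $R_j\varphi=K^j$ is unique, because its Fourier transform is determined off $\{\xi_j=0\}$ and hence everywhere by continuity. So $\varphi$ must be the Mateu--Verdera function $h$ of Section 4. Since $(-\Delta)^{1/2}$ is nonlocal, $h$ is not supported in the ball. Concretely, write $G=\min(|x|^{1-d},1)=|x|^{1-d}-u$ with $u=(|x|^{1-d}-1)\chi_{B(0,1)}\ge 0$. For $|x|>1$ one gets $\varphi(x)=\kappa_d\int_{B(0,1)}u(y)|x-y|^{-d-1}\,dy>0$ with $\kappa_d>0$. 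This decays like $|x|^{-d-1}$. Since $u(y)\approx (d-1)(1-|y|)$ near $\mathbb S^{d-1}$, it also blows up like $\ln\frac{1}{|x|-1}$ as $|x|\to 1^+$, which agrees with the series for $h$ quoted in Section 4.

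Inside the ball your guess is essentially right: there $h$ is a multiple of $\psi$ plus a bounded function. The tail is also harmless, since it has an integrable radially decreasing majorant and is controlled by $M$. The outer logarithmic singularity, however, is controlled neither by $M_\psi$, whose dilates blow up only as a sphere is approached from inside, nor by $M$. So your pointwise bound $R_j^\ast f\le C(M_\psi(R_jf)+M(R_jf))$ is unjustified, and Theorems \ref{notw11} and \ref{dyad w11} alone do not suffice.

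What is missing is a separate treatment of $\phi(x)=-\ln(|x|-1)\chi_{\{1<|x|<2\}}$. The paper dominates $h\le h^0+h^1+h^2$, where $h^0$ is radially decreasing, $h^1$ is a multiple of $\psi$, and the outer singular part of $h^2$ is bounded by $\phi$. It then proves $M_\phi f\le C_dM^2f$ (Proposition \ref{M3}, via $\|\phi-1\|_{\exp L,B(0,2)}\lesssim \ln d$). It also proves the weak $(1,1)$ bound for $M^D_\phi$ through the H\"ormander-type regularity condition (Theorem \ref{dyad w11phi}). With these two facts added, your derivation of the lacunary weak $(1,1)$ and weak $L\log L$ estimates goes through as you describe.

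The negative statement is not proved in your proposal: no $f$ is constructed and no lower bound is derived. The reduction you sketch, $K^j_t\ast f=a_d\psi_t\ast g$ plus a term whose maximal function is weak-type bounded, also breaks down for the same reason as above. The remainder $h-a_d\psi$ contains the outer logarithmic singularity, so by the Carrillo--Guzm\'an criterion its non-lacunary maximal operator is itself not of weak type $(1,1)$. Subtracting it therefore gives no lower bound.

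The paper avoids explicit computation altogether. Since $h\in L^1$ and $\esssup_{r>0}r^dh(rx')=+\infty$ for every $x'\in\mathbb S^{d-1}$, the argument of Remark \ref{another proof} shows that $M^\ast$ is not of weak type $(1,1)$, and then it uses $R_j^\ast f=M^\ast(R_jf)$. On any route you must also make sure the bad functions lie in $R_j(H^1)$; in particular they have mean zero and cannot be taken nonnegative. The paper passes over this point quickly. Functions $g=\partial_j\eta$ with $\eta\in C_c^\infty$ do lie in this range, since $f=-(-\Delta)^{1/2}\eta\in H^1$ satisfies $R_jf=\partial_j\eta$.
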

\par
The rest of the paper is organized as follows. In Section 2, we obtain $L^p$ estimates and endpoint estimates for $M_\psi$ with bounds dependent on the dimension $d$ by controlling the maximal operator $M_\psi f$ by $M^2f$, where $M^2=M\circ M$ its iteration. Theorem \ref{ppe},Theorem \ref{notw11} and Theorem \ref{dyad w11} are proved in Section 2. 
After that, we present the proof of Theorem \ref{dfppe} in Section 3 by using the spherical maximal operator. After that, we apply our main results to prove some known estimates for maximal Riesz transforms in terms of Riesz transforms in Section 4. 
\par
Throughout the whole paper $d$ denotes the dimension. For $k\in\mathbb N$ and $k\ge2$, we denote by $\nu_k$ the volume of the unit ball in $\mathbb R^k$ and by $\omega_{k-1}$ the surface area of the unit sphere $\mathbb S^{k-1}$. $C>0$ stands for an universal constant whose value may vary from occurrence to occurrence and never depend on the dimension, $\lceil C\rceil$ is the smallest integer greater than $C$. Symbols $C_\Delta$ and $C(\Delta)$ stand for constants that possibly depends on $\Delta$.

\section{Estimates for $M_\psi$}\label{est Mpsi}
The goal of this section is to obtain some estimates for $M_\psi$ with bounds dependent on the dimension $d$.
\subsection{$L^p$ estimates for $M_\psi$}
To prove Theorem \ref{dfppe}, we first establish $L^p$ estiamtes for $M_\psi$ with bounds dependent on the dimension $d$. We obtain the following result.
\begin{theorem}\label{ppe}
For $1<p\le\infty$, there is a constant $C_p$ independent of the dimension $d$ such that
\begin{align}\label{dfm1}
\|M_\psi f\|_{L^p(\mathbb R^d)}\le C_p4^d\nu_d\ln d\|f\|_{L^p(\mathbb R^d)}.
\end{align}
\end{theorem}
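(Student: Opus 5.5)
The plan is to dominate $M_\psi f$ pointwise by the iterated Hardy--Littlewood maximal function $M^2f=M(Mf)$, where $M$ is the centered maximal operator over Euclidean balls. By Stein's theorem (\cite{S82,SS83}), $M$ has $L^p$ bounds $C_p$ independent of $d$ for $1<p\le\infty$. Hence $\|M^2f\|_{L^p}\le C_p^2\|f\|_{L^p}$, and it suffices to show
\begin{align*}
\psi_t\ast|f|(x)\le C\,4^d\nu_d\ln d\,\bigl(Mf(x)+M^2f(x)\bigr),\qquad t>0,
\end{align*}
with $C$ absolute (for $d=1$ one replaces $\ln d$ by a constant). By dilation and translation invariance of $M$ it is enough to treat $t=1$ and $x=0$, i.e.\ to bound $\int_{B(0,1)}\psi(y)g(y)\,dy$ for $g=|f(-\cdot)|\ge0$.

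The first step is a layer decomposition adapted to the boundary singularity. Put $S_j=\{1-2^{-j}\le|y|<1\}$ for $j\ge0$. Since $-\ln(1-|y|^2)\le -\ln(1-|y|)+\ln 2$, we get $\psi\le C\sum_{j\ge0}\chi_{S_j}$ on $B(0,1)$. Each $S_j$ lies in $B(0,1)$, so $\int_{S_j}g\le \nu_d\,Mg(0)$. Let $J=\lceil\log_2 d\rceil$. The layers $j\le J$ then contribute at most $C\nu_d\ln d\,Mf(x)$. This is where the factor $\ln d$ comes from, and it reflects the fact that $|S_j|\approx\nu_d$ until $2^{-j}\lesssim 1/d$.

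The second step handles the thin shells $j>J$, where $|S_j|\approx\nu_d\,d\,2^{-j}$ is summable, and this is where $M^2$ enters. I would cover $S_j$ by balls of radius $r_j=2^{-j}$ centered on $S_j$ and use
\begin{align*}
\chi_{S_j}(y)\le \frac{c_d}{|B(0,r_j)|}\int_{\widetilde S_j}\chi_{B(z,r_j)}(y)\,dz,
\end{align*}
where $\widetilde S_j$ is a slight enlargement of $S_j$. The constant $c_d$ comes from the fact that every $y\in S_j$ sees a portion of $\widetilde S_j\cap B(y,r_j)$ of measure at least $4^{-d}|B(0,r_j)|$, which is the source of $4^d$. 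This gives
\begin{align*}
\int_{S_j}g\le 4^d\int_{\widetilde S_j}\frac{1}{|B(z,r_j)|}\int_{B(z,r_j)}g\,dz .
\end{align*}
The inner averages of $g$ over these small balls must then be controlled by $Mg$ at points of $\widetilde S_j$. One then wants to sum over $j>J$ and recognize the outer integral $\sum_j\int_{\widetilde S_j}Mg(z)\,dz$ as dominated by an average of $Mg$ over $B(0,2)$. That average is at most $2^d\nu_d\,M^2g(0)$, with the summability of $|S_j|$ for $j>J$ used so that there is no extra loss in $j$.

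The main obstacle is exactly this last summation. A naive bound of each shell by $\nu_d M^2g$ diverges in $j$, and bounding the shell integrals directly by $Mg$ is circular, since it just reproduces $\int\psi g$. The point to get right is to transfer the weight $\sum_{j>J}\chi_{S_j}$ (a truncated copy of $\psi$) onto the maximal function in such a way that only a bounded-overlap family of averages of $Mg$ appears. My first attempt would be a Whitney-type decomposition of the region $\{1-2^{-J}<|y|<1\}$ by balls of radius comparable to the distance to $\mathbb S^{d-1}$. Within each Whitney ball the average of $g$ is at most $Mg$ at any point of a fixed dilate, and the dilates overlap boundedly with constant $\lesssim 4^d$. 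If this works cleanly, combining the two steps and applying Stein's dimension-free bound for $M$ twice yields \eqref{dfm1}.
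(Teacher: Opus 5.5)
You correctly locate the difficulty in the last summation, but the proposal does not get past it, and both concrete mechanisms you offer fail.

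Because the $\widetilde S_j$ are nested, $\sum_{j>J}\int_{\widetilde S_j}Mg=\int\bigl(\sum_{j>J}\chi_{\widetilde S_j}\bigr)Mg$. Up to constants, $\sum_{j>J}\chi_{\widetilde S_j}(z)$ is the same logarithmic weight $\log_2^+\frac{2^{-J}}{1-|z|}$ you started from. The summability of $|S_j|$ does not help, because $Mg$ is unbounded near $\mathbb S^{d-1}$. The claimed domination by $\int_{B(0,2)}Mg$ is in fact false. Take $g=\epsilon^{-d}\chi_{B(\mathbf e_1,\epsilon)}$. Then $\int_{B(0,2)}Mg\le C_d(1+\ln\frac1\epsilon)$. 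On the other hand, $Mg(z)\ge(2|z-\mathbf e_1|)^{-d}$ for $|z-\mathbf e_1|>\epsilon$, and $1-|z|\le|z-\mathbf e_1|$; together these give $\int\bigl(\sum_{j>J}\chi_{\widetilde S_j}\bigr)Mg\ge c_d\ln^2\frac1\epsilon$ for small $\epsilon$.

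The Whitney variant has the same defect. Bounding the average of $g$ over each Whitney ball by $Mg$ on a boundedly overlapping dilate controls only the unweighted integral of $g$ near the sphere. Since the weight is $\approx j-J$ on Whitney balls at distance $\approx 2^{-j}$ from the sphere, you land again on a log-weighted integral of $Mg$. So the step you leave conditional (``if this works cleanly'') is the whole content of the proof.

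What is missing is the multi-scale charging behind Stein's $L\log L$ inequality and behind \eqref{logm2}. A point at depth $2^{-m}$ below the sphere has to be charged once to each of about $m-J$ pairwise \emph{disjoint} regions. This is done through averages at scale $2^{-j}$ taken at points of depth $\approx 2^{-j}$, which still see the much deeper point. In your setup this is a one-line change: integrate over $W_j=\{1-2^{-j}\le|z|<1-2^{-j-1}\}$ instead of $\widetilde S_j$.

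For $y\in S_j$, $j\ge1$, put $y^*=(1-\tfrac34\cdot 2^{-j})\,y/|y|$. Then $B(y^*,2^{-j-2})\subset W_j\cap B(y,2^{-j})$, which is exactly your fraction $4^{-d}$. Fubini then gives $\int_{S_j}g\le 4^d\int_{W_j}Mg$. Since the $W_j$ are disjoint, $\sum_{j\ge1}\int_{W_j}Mg\le\int_{B(0,1)}Mg\le\nu_d M^2g(0)$.

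Repaired this way, your route is a correct and more elementary alternative to the paper's. The paper writes $\psi=(\psi-1)+1$ and applies the Orlicz H\"older inequality \eqref{gHi}, using $\|\psi-1\|_{\exp L,B(0,1)}\le C\ln d$ from \eqref{psi-1}. It then imports $M_{L\log L}f\le C4^dM^2f$ from \eqref{logm2}. The repaired argument proves the needed instance of that last fact by hand for this particular weight. Applied to every $j\ge1$, it even gives $M_\psi f\le C4^d\nu_d M^2f$ without the factor $\ln d$.
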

\begin{remark}
For any $a>0$, $a^d\omega_{d-1}$ approaches $0$ as $d$ goes to infinity. It seems \eqref{dfm1} is a dimension free $L^p$ estimate, but we should note that $\|\psi\|_{L^1(\mathbb R^d)}$ is not $1$ and depedent on the dimension $d$. Indeed, 
\begin{align}\label{psil1}
  \|\psi\|_{L^1(\mathbb R^d)}&=-\omega_{d-1}\int_0^1\ln(1-r^2)r^{d-1}dr\le\frac{\omega_{d-1}\left(\gamma+\Psi(d+1)+1\right)}d\le \frac{3\omega_{d-1}\ln(d+1)}d,
\end{align}
where $\gamma=0.57721566\cdots$ is the Euler-Mascheroni constant, $\Psi$ is the Digamma function, and we have used the following inequality \cite{AQ97}
\begin{align*}
\ln s-\frac1s<\Psi(s)<\ln s-\frac1{2s},\ \ s>0.
\end{align*}
Observe that $\|\psi\|_{L^1(\mathbb R^d)}$ decays faster than $4^d\omega_{d-1}$ to $0$. This indicates that Theorem \ref{ppe} has to be improved further to get Theorem \ref{dfppe}.
\end{remark}
In order to prove Theorem \ref{ppe}, let us record some facts on Orlicz spaces, see \cite{O88,RR91}. 
A function $\Phi:[0,\infty)\rightarrow[0,\infty)$ is a Young function if it is continuous, convex, and increasing satisfying $\Phi(0)=0$ and $\Phi(t)\rightarrow\infty$ as $t\rightarrow\infty$. We define the $\Phi$-average of a function $f$ over a ball $B$ by means of the following Luxemburg norm:
\begin{align*}
\|f\|_{\Phi,B}=\inf\left\{\lambda>0:\frac1{|B|}\int_B\Phi\left(\frac{|f(y)|}{\lambda}\right)dy\le 1\right\}.
\end{align*}
In this paper, we consider Young functions $\Phi(t)=t\left(1+\log^+t\right)$ and $\Psi(t)=e^t-1$. Corresponding averages will be denoted by $\|\cdot\|_{\Phi,B}=\|\cdot\|_{L\log L,B}$ and $\|\cdot\|_{\Psi,B}=\|\cdot\|_{\exp L,B}$. The following H\"{o}lder's inequality for Orlicz spaces holds
\begin{align}\label{gHi}
\frac1{|B|}\int_B|f(y)g(y)|dy\le \|f\|_{L\log L,B}\|g\|_{\exp L,B}.
\end{align}
The Orlicz maximal operator is defined by
\begin{align*}
M_{L\log L}f(x)=\sup_{B\ni x}\|f\|_{\Phi,B}.
\end{align*}
It is proved in \cite{CP94} that
\begin{align}\label{logm2}
M_{L\log L}f(x)\le C4^dM^2f(x),
\end{align}
where $C$ is independent of the dimension $d$, see also \cite[Lemma 3.5.4]{G14M}.
Using \eqref{logm2}, we get the following pointwise estimate that will be proved monmentarily.
\begin{proposition}\label{M2}
There is a constant $C$ independent of the dimension $d$ such that
\begin{align*}
M_\psi f(x)\le C4^d\nu_d\ln d\ M^2f(x),\ \ x\in\mathbb R^d.
\end{align*}
\end{proposition}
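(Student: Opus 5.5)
Fix $x$ and $t>0$, and let $B=B(x,t)$. Write
\begin{align*}
|\psi_t\ast f(x)|\le t^{-d}\int_{B}|f(y)|\,\psi\Big(\frac{x-y}t\Big)dy=\nu_d\,\frac1{|B|}\int_B|f(y)|\,\psi\Big(\frac{x-y}t\Big)dy,
\end{align*}
because $|B|=\nu_dt^d$. The generalized H\"older inequality \eqref{gHi} then bounds the right-hand side by
\begin{align*}
\nu_d\,\|f\|_{L\log L,B}\,\Big\|\psi\Big(\frac{x-\cdot}t\Big)\Big\|_{\exp L,B}.
\end{align*}
The first factor is at most $M_{L\log L}f(x)\le C4^dM^2f(x)$ by \eqref{logm2}. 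So the whole proposition reduces to one claim:
\begin{align*}
\|\psi\|_{\exp L,B(0,1)}\le C\ln d \qquad (d\ge2).
\end{align*}
This norm is dilation and translation invariant, so it does not depend on $t$ or $x$. Taking the supremum over $t$ then finishes the proof. For $d=1$ one can replace $\ln d$ by a constant, or simply read $\ln d$ as $\ln(d+1)$.

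To prove the claim, I test $\lambda=a\ln d$ in the Luxemburg norm. Since $e^{\psi/\lambda}=(1-|y|^2)^{-1/\lambda}$, we need
\begin{align*}
\frac1{\nu_d}\int_{B(0,1)}\Big(e^{\psi(y)/\lambda}-1\Big)dy=d\int_0^1\Big((1-r^2)^{-1/\lambda}-1\Big)r^{d-1}dr\le1.
\end{align*}
In polar coordinates the integral is a Beta function. Setting $s=r^2$,
\begin{align*}
d\int_0^1 (1-r^2)^{-1/\lambda}r^{d-1}dr=\frac d2B\Big(\frac d2,1-\frac1\lambda\Big)=\frac{\Gamma(\frac d2+1)\Gamma(1-\frac1\lambda)}{\Gamma(\frac d2+1-\frac1\lambda)}.
\end{align*}
So the condition becomes
\begin{align*}
\frac{\Gamma(\frac d2+1)\,\Gamma(1-\epsilon)}{\Gamma(\frac d2+1-\epsilon)}\le 2,\qquad \epsilon=\frac1\lambda.
\end{align*}

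The main (though mild) obstacle is estimating this Gamma ratio uniformly in $d$. I will use the log-convexity of $\Gamma$, or equivalently the mean value theorem with the digamma bound $\Psi(s)<\ln s$ quoted from \cite{AQ97}. This gives
\begin{align*}
\frac{\Gamma(\frac d2+1)}{\Gamma(\frac d2+1-\epsilon)}\le \exp\Big(\epsilon\,\Psi\big(\tfrac d2+1\big)\Big)\le\Big(\frac d2+1\Big)^{\epsilon}.
\end{align*}
With $\epsilon=1/(a\ln d)$ this is at most $e^{2/a}$ for $d\ge2$. Also $\Gamma(1-\epsilon)\le\Gamma(1/2)$ is bounded once $\epsilon\le1/2$. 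Choosing $a$ a large absolute constant therefore makes the product at most $2$, which proves the claim. Collecting constants gives $M_\psi f(x)\le C4^d\nu_d\ln d\,M^2f(x)$ with $C$ independent of $d$.

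If one prefers to avoid Gamma functions, the same estimate can be done by hand. Split the integral into $r^2\le1-d^{-2}$, where $(1-r^2)^{-\epsilon}\le d^{2\epsilon}=e^{2/a}$, and the remaining thin shell. On the shell, the factor $(1-r^2)^{-\epsilon}$ is integrable against $d\,r^{d-1}dr$, and its contribution is small.
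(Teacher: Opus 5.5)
Your proof is correct. Its skeleton matches the paper's: Orlicz H\"older \eqref{gHi}, the bound \eqref{logm2} for $M_{L\log L}$, and an $\exp L$ bound of size $\ln d$ for the kernel on the unit ball. The differences lie in how the kernel estimate is handled.

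First, you apply H\"older to $\psi$ itself. The paper instead writes $\psi=(\psi-1)+1$, applies H\"older to $\psi-1$, and bounds the constant part by $\nu_d Mf$. That splitting buys nothing, since $\|\psi\|_{\exp L,B(0,1)}\le\|\psi-1\|_{\exp L,B(0,1)}+1/\ln 2$.

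Second, you evaluate the Luxemburg integral exactly as $\Gamma(\frac d2+1)\Gamma(1-\epsilon)/\Gamma(\frac d2+1-\epsilon)$. You then control the Gamma ratio by $(\frac d2+1)^{\epsilon}$ via the mean value theorem and $\Psi(s)<\ln s$. This gives $\|\psi\|_{\exp L,B(0,1)}\le a\ln d$ for every $d\ge 2$ with an explicit absolute constant $a$. The paper splits the radial integral at $r^2=1-e^{-1}$ and $r^2=1-d^{-1}$ with $\lambda=4\ln d$, and carries this out only for $d\ge 5$. It covers the remaining dimensions by a non-quantitative dominated convergence argument.

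So your route is shorter and uniform in $d$. Your remark that $\ln d$ must be replaced by a constant when $d=1$ also addresses a degenerate case the statement glosses over.

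One cosmetic point: the bound $\Gamma(1-\epsilon)\le\Gamma(\frac12)=\sqrt\pi$ closes the argument only because $\sqrt\pi<2$. It forces $e^{2/a}\le 2/\sqrt\pi$, e.g.\ $a\ge 17$. It is cleaner to note that $\epsilon\le 1/(a\ln 2)$, so $\Gamma(1-\epsilon)\to 1$ uniformly in $d$ as $a\to\infty$.
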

\begin{proof}[Proof of Theorem \ref{ppe}]
Assuming Proposition \ref{M2} for the moment, we can quickly prove Theorem \ref{ppe}. Indeed, Theorem \ref{ppe} follows from Proposition \ref{M2} and the fact  (see \cite[Theorem 13]{S82},\cite[Theorem A]{SS83}) that 
\begin{align*}
\|Mf\|_{L^p(\mathbb R^d)}\le C_p\|f\|_{L^p(\mathbb R^d)},\ \ 1<p\le\infty,
\end{align*}
 where $C_p$ is independent of dimension $d$.
 \end{proof}
 It remains to prove Proposition \ref{M2}, which is also crucial for the weak $L\log L$ type estimate. 
\begin{proof}[Proof of Proposition \ref{M2}]
By translating and dilating one reduces the proof of Proposition \ref {M2} to
\begin{align*}
\psi\ast f(0)\le C4^d\nu_d\ln d\ M^2f(0).
\end{align*}
\par
Note that $\omega_{d-1}=d\nu_d$. For $1\le p<\infty$, $f\in L^p(\mathbb R^d)$,  \eqref{gHi} and \eqref{logm2} give
\begin{align*}
|\psi\ast f(0)|&=\left|\int_{B(0,1)}\left(\psi(y)-1\right)f(y)dy+\int_{B(0,1)}f(y)dy\right|\\
&\le\int_{B(0,1)}\left|\psi(y)-1\right|f(y)dy+|B(0,1)|Mf(0)\\
&\le |B(0,1)|\|\psi-1\|_{\exp L,B(0,1)}\|f\|_{L\log L,B(0,1)}+|B(0,1)|Mf(0)\\
&\le C\nu_d\ln d\ M_{L\log L}f(0)+\nu_dMf(0)\\
&\le C4^d\nu_d\ln d\ M^2f(0),
\end{align*}
where we have used the fact that
\begin{align}\label{psi-1}
\|\psi-1\|_{\exp L,B(0,1)}<C\ln d.
\end{align}
Indeed, we write
\begin{align*}
\|\psi-1\|_{\exp L,B(0,1)}&=\inf\left\{\lambda>0:\frac1{|B(0,1)|}\int_{B(0,1)}\left(e^{\frac{|\psi(y)-1|}{\lambda}}-1\right)dy\le 1\right\}\\
&=\inf\left\{\lambda>0:\frac1{|B(0,1)|}\int_{B(0,1)}e^{\frac{|\psi(y)-1|}{\lambda}}dy\le 2\right\}.
\end{align*}
\par
Observe that $-\ln(1-s^2)$ is strictly increasing on $[0,1)$, $\psi(0)=0$ and $\psi(y)$ approches $+\infty$ as $|y|$ tends to $1$ from the left. These facts imply that $\psi(y)-1<0$ when $y\in B(0,\sqrt{1-e^{-1}})$ and $\psi(y)-1\ge0$ when $y\in B(0,1)\setminus B(0,\sqrt{1-e^{-1}})$. It follows that
\begin{align*}
e^{\frac{|\psi(y)-1|}{\lambda}}=e^{\frac1\lambda}\chi_{B(0,\sqrt{1-e^{-1}})}(y)(1-|y|^2)^{\frac1{\lambda}}+e^{-\frac1\lambda}\chi_{B(0,1)\setminus B(0,\sqrt{1-e^{-1}})}(y)(1-|y|^2)^{-\frac1{\lambda}}.
\end{align*}
When $\lambda>2$, it is dominated by the following integrable function
\begin{align*}
e^{\frac12}\chi_{B(0,\sqrt{1-e^{-1}})}(y)+\chi_{B(0,1)\setminus B(0,\sqrt{1-e^{-1}})}(y)(1-|y|^2)^{-\frac12}.
\end{align*}
Lebesgue's dominated convergence theorem yields
\begin{align*}
\lim_{\lambda\rightarrow+\infty}\frac1{|B(0,1)|}\int_{B(0,1)}e^{\frac{|\psi(y)-1|}{\lambda}}dy=1.
\end{align*}
Thus, there is a $\lambda_0>0$ such that 
\begin{align*}
\frac1{|B(0,1)|}\int_{B(0,1)}e^{\frac{|\psi(y)-1|}{\lambda_0}}dy\le 2.
\end{align*}
Therefore, we have
\begin{align*}
\|\psi-1\|_{\exp L,B(0,1)}\le \lambda_0<\infty.
\end{align*}
Moreover, when $d\ge5$ and $\lambda=4\ln d$, by switching to polar coordinates, we observe that
\begin{align*}
\frac1{|B(0,1)|}\int_{B(0,1)}e^{\frac{|\psi(y)-1|}{\lambda}}dy&=e^{\frac1\lambda}d\int_0^{\sqrt{1-e^{-1}}}(1-r^2)^{\frac1{\lambda}}r^{d-1}dr+e^{-\frac1\lambda}d\int^1_{\sqrt{1-e^{-1}}}(1-r^2)^{-\frac1{\lambda}}r^{d-1}dr\\
&\le e^{\frac1\lambda}d\int_0^{\sqrt{1-e^{-1}}}r^{d-1}dr+e^{-\frac1\lambda}d^{\frac1\lambda}d\int^{\sqrt{1-d^{-1}}}_{\sqrt{1-e^{-1}}}r^{d-1}dr\\
&+e^{-\frac1\lambda}d\int^1_{\sqrt{1-d^{-1}}}(1-r^2)^{-\frac1{\lambda}}rdr\\
&\le e^{\frac1\lambda}(1-e^{-1})^{\frac d2}+e^{-\frac1\lambda}d^{\frac1\lambda}\left(1-(1-e^{-1})^{\frac d2}\right)+\frac{d^{\frac1\lambda}e^{-\frac1\lambda}\lambda}{2(\lambda-1)}\\
&\le2,
\end{align*}
which means 
\begin{align*}
\|\psi-1\|_{\exp L,B(0,1)}\le C\ln d.
\end{align*}
This completes the proof. 
\end{proof}
\begin{remark}
The classical John-Nirenberg inequality implies that for $b\in BMO$, there is a constant $C_d$ such that
\begin{align*}
\|b-b_Q\|_{\exp L,Q}\le C_d\|b\|_{BMO}
\end{align*}
for any cubes $Q$. Estimate \eqref{psi-1} can not be proved in this way, since $\psi\notin BMO(\mathbb R^d)$. The problem is at the unit sphere. Let ${\mathbf e}_1=(1,0,\cdots,0)$, consider the ball $B({\mathbf e}_1,\varepsilon)$, where $0<\varepsilon<\frac18$. Note that $B\left((1-\frac\varepsilon2){\mathbf e}_1,\frac\varepsilon2\right)\subseteq B({\mathbf e}_1,\varepsilon)\cap B(0,1)$, which can be represented as
\begin{align*}
B\left((1-\frac\varepsilon2){\mathbf e}_1,\frac\varepsilon2\right)=\bigg\{
\theta_2,\theta_3,\cdots,\theta_{d-2}\in[0,\pi],\theta_{d-1}\in[0,2\pi),
\theta_1\in[0,\arcsin{\frac{\varepsilon}{2-\varepsilon}}],\\r\in \left(r_{1}(\theta_1),r_{2}(\theta_1)\right)\bigg\},
\end{align*}
where
\begin{align*}
r_{1}(\theta_1)=\frac{(2-\varepsilon)\cos\theta_1-\sqrt{\varepsilon^2\cos^2\theta_1-4(1-\varepsilon)\sin^2\theta_1}}2
\end{align*}
and
\begin{align*}
r_{2}(\theta_1)=\frac{(2-\varepsilon)\cos\theta_1+\sqrt{\varepsilon^2\cos^2\theta_1-4(1-\varepsilon)\sin^2\theta_1}}2.
\end{align*}
Observe that
\begin{align*}
\frac1{|B({\mathbf e}_1,\varepsilon)|}\int_{B({\mathbf e}_1,\varepsilon)}\left|\psi(y)-\psi_{B({\mathbf e}_1,\varepsilon)}\right|dy&\ge \frac{|B({\mathbf e}_1,\varepsilon)\setminus B(0,1)|}{|B({\mathbf e}_1,\varepsilon)|}\psi_{B({\mathbf e}_1,\varepsilon)}\ge\frac{\psi_{B({\mathbf e}_1,\varepsilon)}}2.
\end{align*}
Using the spherical coordinate formula, we have
\begin{align*}
\psi_{B({\mathbf e}_1,\varepsilon)}&=-\frac1{|B({\mathbf e}_1,\varepsilon)|}\int_{B({\mathbf e}_1,\varepsilon)\cap B(0,1)}\ln\big(1-|y|\big)dy-\frac1{|B({\mathbf e}_1,\varepsilon)|}\int_{B({\mathbf e}_1,\varepsilon)\cap B(0,1)}\ln\big(1+|y|\big)dy\\
&\ge-\frac1{|B({\mathbf e}_1,\varepsilon)|}\int_0^{2\pi}\cdots\int_0^{\arcsin{\frac{\varepsilon}{2-\varepsilon}}}\int_{r_1(\theta_1)}^{r_2(\theta_1)}\ln(1-r)r^{d-1}dr\sin^{d-2}\theta_1d\theta_1\cdots d\theta_{d-1}-\ln2\\
&\ge -\frac{\omega_{d-2}}{\varepsilon^d\omega_{d-1}}\frac{\ln\varepsilon}{2^{d-1}}\int_0^{\arcsin{\frac{\varepsilon}{2-\varepsilon}}}\sqrt{\varepsilon^2\cos^2\theta_1-4(1-\varepsilon)\sin^2\theta_1}\sin^{d-2}\theta_1d\theta_1-\ln2.
\end{align*}
Let $t=\frac{2-\varepsilon}{\varepsilon}$, then
\begin{align*}
\int_0^{\arcsin{\frac{\varepsilon}{2-\varepsilon}}}\sqrt{\varepsilon^2\cos^2\theta_1-4(1-\varepsilon)\sin^2\theta_1}\sin^{d-2}\theta_1d\theta_1&=\frac2{t+1}\int_0^{\arcsin{\frac1t}}\sqrt{1-t^2\sin^2\theta_1}\sin^{d-2}\theta_1d\theta_1.
\end{align*}
Note that $\varepsilon\rightarrow0^+$ if and only if $t\rightarrow+\infty$. Using Proposition \ref{fn} below, we deduce 
\begin{align*}
&\lim_{\varepsilon\rightarrow0^+}\bigg(\frac{2-\varepsilon}{\varepsilon}\bigg)^{d-1}\frac1{\varepsilon}\int_0^{\arcsin{\frac{\varepsilon}{2-\varepsilon}}}\sqrt{\varepsilon^2\cos^2\theta_1-4(1-\varepsilon)\sin^2\theta_1}\sin^{d-2}\theta_1d\theta_1\\
&=\lim_{t\rightarrow+\infty}t^{d-1}\int_0^{\arcsin{\frac1t}}\sqrt{1-t^2\sin^2\theta_1}\sin^{d-2}\theta_1d\theta_1\\
&=\frac{\sqrt{\pi}}{2d}\cdot\frac{\Gamma(\frac{d-1}2)}{\Gamma(\frac d2)}.
\end{align*}
It follows that
\begin{align*}
\psi_{B({\mathbf e}_1,\varepsilon)}\ge -\frac{\omega_{d-2}}{\omega_{d-1}}\frac{\ln\varepsilon}{(4-2\varepsilon)^{d-1}}\frac{\sqrt{\pi}}{2d}\cdot\frac{\Gamma(\frac{d-1}2)}{\Gamma(\frac d2)}-\ln2
\end{align*}
when $\varepsilon$ is small enough. Finally, we get 
\begin{align*}
\lim_{\varepsilon\rightarrow0^+}\frac1{|B({\mathbf e}_1,\varepsilon)|}\int_{B({\mathbf e}_1,\varepsilon)}\left|\psi(y)-\psi_{B({\mathbf e}_1,\varepsilon)}\right|dy=+\infty.
\end{align*}
\end{remark}
\begin{proposition}\label{fn}
For $n\in\mathbb N$, let 
\begin{align*}
f_n(t)=t^{n}\int_0^{\arcsin{\frac1{t}}}\sqrt{1-t^2\sin^2\theta}\sin^{n-1}\theta d\theta, \ \ t\in[1,+\infty).
\end{align*}
Then $f_n$ is decreasing on $[1,+\infty)$. Moreover, 
\begin{align*}
\lim_{t\rightarrow+\infty}f_n(t)=\frac{\sqrt{\pi}}{2(n+1)}\cdot\frac{\Gamma(\frac n2)}{\Gamma(\frac {n+1}2)}.
\end{align*}
\end{proposition}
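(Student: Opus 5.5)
Substituting $s=t\sin\theta$ should turn $f_n$ into a family of integrals that visibly converge, and the change of variables itself is what we will use both for monotonicity and for the limit. Putting $\sin\theta=s/t$, $s\in[0,1]$, gives $d\theta=\frac{ds}{\sqrt{t^2-s^2}}$, so
\begin{align*}
f_n(t)=t^{n}\int_0^1\sqrt{1-s^2}\,\frac{s^{n-1}}{t^{n-1}}\,\frac{ds}{\sqrt{t^2-s^2}}
=\int_0^1\frac{\sqrt{1-s^2}\,s^{n-1}}{\sqrt{1-s^2/t^2}}\,ds .
\end{align*}

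For monotonicity, note that for each fixed $s\in[0,1)$ the factor $(1-s^2/t^2)^{-1/2}$ is decreasing in $t$ on $[1,\infty)$. The integrand is nonnegative, so $f_n$ is decreasing. At $t=1$ the integrand becomes $s^{n-1}$, so $f_n(1)=1/n$ is finite. This finiteness is the only point needing care, since the integrand is singular at $s=1$ only when $t=1$.

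For the limit, the integrand is dominated by its value at $t=1$, namely $s^{n-1}$, which is integrable on $[0,1]$. As $t\to\infty$ the integrand converges pointwise to $\sqrt{1-s^2}\,s^{n-1}$. Dominated convergence therefore gives
\begin{align*}
\lim_{t\to\infty}f_n(t)=\int_0^1 s^{n-1}(1-s^2)^{1/2}\,ds=\frac12 B\Big(\frac n2,\frac32\Big)=\frac{\Gamma(\frac n2)\Gamma(\frac32)}{2\Gamma(\frac n2+\frac32)} .
\end{align*}

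It remains to match the constant. We have $\Gamma(\frac32)=\frac{\sqrt\pi}{2}$ and $\Gamma(\frac{n+3}{2})=\frac{n+1}{2}\Gamma(\frac{n+1}2)$. Substituting these, the limit equals
\begin{align*}
\frac{\sqrt\pi}{2(n+1)}\cdot\frac{\Gamma(\frac n2)}{\Gamma(\frac{n+1}2)},
\end{align*}
as claimed. No step looks like a real obstacle. The only thing to verify carefully is the Jacobian in the substitution, since that is what produces the clean $t$-dependence.
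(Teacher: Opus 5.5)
Your proof is correct and follows essentially the same route as the paper. The paper makes the same substitution in two steps ($u=\sin\theta$, then $v=tu$) to reach $f_n(t)=\int_0^1\sqrt{\frac{1-v^2}{1-(v/t)^2}}\,v^{n-1}\,dv$, and reads off monotonicity from the integrand. It then obtains the limit from continuity of $\phi_n(s)=\int_0^1\sqrt{\frac{1-v^2}{1-(sv)^2}}\,v^{n-1}\,dv$ at $s=0$, which is exactly the dominated convergence you carried out with the majorant $s^{n-1}$. The only difference is that the paper evaluates the constant via a Wallis integral, whereas you use the Beta function.
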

\begin{proof}
For any $t\ge1$, by changing of variables, we write
\begin{align*}
f_n(t)&=t^{n}\int_0^{\frac1{t}}\sqrt{1-t^2u^2}\frac{u^{n-1}du}{\sqrt{1-u^2}}=\int_0^{1}\sqrt{\frac{1-v^2}{1-(\frac vt)^2}}v^{n-1}dv.
\end{align*}
Set
\begin{align*}
\phi_n(s)=\int_0^{1}\sqrt{\frac{1-v^2}{1-(sv)^2}}v^{n-1}dv, \ \ s\in[0,1].
\end{align*}
Observe that $\phi_n(s)$ is increasing and continuous on $[0,1]$. Therefore, $f_n(t)$ is decreasing on $[1,+\infty)$ and 
\begin{align*}
\lim_{t\rightarrow+\infty}f_n(t)=\lim_{s\rightarrow0^+}\phi_n(s)&=\int_0^{1}\sqrt{1-v^2}v^{n-1}dv=\frac1{n+1}\int_0^{\frac{\pi}2}\sin^{n-1}\theta d\theta\\
&=\frac{\sqrt{\pi}}{2(n+1)}\cdot\frac{\Gamma(\frac n2)}{\Gamma(\frac {n+1}2)}.
\end{align*}
\end{proof}
\par
\subsection{Proofs of Theorem \ref{notw11}}

 It was proved in \cite[Lemma 1.6]{CP95} that
\begin{align}\label{M2llogl}
\left|\left\{x\in\mathbb R^d:M^2 f(x)>\lambda\right\}\right|\le C4^d\int_{\mathbb R^d}\Phi\left(\frac{|f(x)|}\lambda\right)dx,\ \ \lambda>0,
\end{align}
where $\Phi(s)=s\left(1+\log^+s\right)$. Note that $a^d\nu_{d-1}$ approaches $0$ as $d$ goes to infinity for any $a>0$. For $0<C<1$, convexity gives $\Phi(Cs)\le C\Phi(s)$ for all $s\ge0$. Proposition \ref{M2} yields the “$L\log L$” type estimate \eqref{epe} in Theorem \ref{notw11}.
\par
It is turn to show $M_\psi$ is not of weak type $(1,1)$. For $d\ge2$, we take $f(y)=\chi_{B(0,1)}(y)$ and write
\begin{align*}
\psi_t\ast f(x)=-\int_{\mathbb R^d}\ln(1-|y|^2)\chi_{B(\frac xt,\frac1t)\cap B(0,1)}(y)dy.
\end{align*}
Obviously, $\psi_t\ast f$ and $M_\psi f(x)$ are radial. We estimate $M_\psi f(x)$ for $|x|\ge 4$. Without loss of generality, we assume that $x$ is on the $x_1$-axis, namely, $x=|x|{\mathbf e}_1$. 
\par
For any $x\notin B(0,4)$, take $t=|x|+1$, we have $B(\frac xt,\frac1t)\cap B(0,1)=B(\frac xt,\frac1t)$, which can be represented as
\begin{align*}
B\left(\frac x{t},\frac1{t}\right)=\left\{
\theta_2,\cdots,\theta_{d-2}\in[0,\pi],\theta_{d-1}\in[0,2\pi),
\theta_1\in\left[0,\arcsin{\frac1{|x|}}\right], r\in\left(r_{1}(\theta_1),r_{2}(\theta_1)\right)\right\},
\end{align*}
where 
\begin{align*}
r_{1}(\theta_1)=\frac{|x|\cos\theta_1-\sqrt{1-|x|^2\sin^2\theta_1}}t
\end{align*}
and
\begin{align*}
r_{2}(\theta_1)=\frac{|x|\cos\theta_1+\sqrt{1-|x|^2\sin^2\theta_1}}t.
\end{align*}
Therefore, the change of variables yields
\begin{align*}
\psi_t\ast f(x)&=-\int_{B(\frac x{t},\frac1{t})}\ln\big(1-|y|^2\big)dy=-\int_{B(\frac x{t},\frac1{t})}\ln\big(1-|y|\big)dy-\int_{B(\frac x{t},\frac1{t})}\ln\big(1+|y|\big)dy\\
&\ge-\int_0^{2\pi}\cdots\int_0^{\arcsin{\frac1{|x|}}}\int_{r_1(\theta_1)}^{r_2(\theta_1)}\ln(1-r)r^{d-1}dr\sin^{d-2}\theta_1d\theta_1\cdots d\theta_{d-1}-\frac{\ln2\omega_{d-1}}{(|x|+1)^d}\\
&\ge \frac{\omega_{d-2}}{2^{d}}\frac{\ln(|x|+1)}{|x|+1}\int_0^{\arcsin{\frac1{|x|}}}\sqrt{1-|x|^2\sin^2\theta_1}\sin^{d-2}\theta_1d\theta_1-\frac{\ln2\omega_{d-1}}{(|x|+1)^d}\\
&\ge C_d\frac{\ln(|x|+1)}{(|x|+1)^d}-\frac{\ln2\omega_{d-1}}{(|x|+1)^d},
\end{align*}
where we have used Proposition \ref{fn}. It follows that there are two positive constants $C_0\ge4$ and $A$ such that 
\begin{align*}
M_\psi f(x)\ge\frac{A\ln|x|^d}{|x|^{d}},\ \ |x|\ge C_0.
\end{align*}
\par
For $0<\lambda\le\frac1e$, $\frac1s\ln{s}=\lambda$ has a solution $s_\lambda$ in $(e,+\infty)$. It is known that $s_\lambda=-\frac{W_{-1}(-\lambda)}\lambda$ and is a continuous, decreasing function on $(0,\frac1e)$, where $W_{-1}$ is the non-principal branch of Lambert $W$-function.
It is proved in \cite[Theorem 3.5]{AS20} that
\begin{align}\label{est of W}
\frac{\sqrt3}{12} r(1+e^{-r})+1<-W_{-1}(-z)<\frac12r (1+e^{-r})+1, \ \ r>0,
\end{align}
where $z(r)=\frac1e\frac{re^{\frac r2}}{e^r-1}$. Note that $z(r)$ is continuous, decreasing on $(0,+\infty)$ and
 \begin{align*}
\lim_{r\rightarrow 0^+}z(r)=\frac1e,\ \ \lim_{r\rightarrow +\infty}z(r)=0.
\end{align*}
\par
For any $\lambda\in(0,\frac Ae)$, there is only one $r_{\frac{\lambda}{A}}$ such that $z(r_{\frac{\lambda}{A}})=\frac{\lambda}{A}$. Using \eqref{est of W}, we deduce that
\begin{align*}
\lambda\left|\left\{x\in\mathbb R^d:M_\psi f(x)>\lambda\right\}\right|&\ge\lambda\left|\left\{x\notin B(0,C_0):\frac{\ln|x|^d}{|x|^d}>\frac{\lambda}{A}\right\}\right|\\
&\ge\lambda\left|\left\{x\in \mathbb R^d:C_0\le|x|<\left({s_{\frac{\lambda}{A}}}\right)^{1/d}\right\}\right|\\
&=\lambda\omega_{d-1}\left( s_{\frac{\lambda}{A}}-C_0^d\right)=\omega_{d-1}\left[-{AW_{-1}\big(-\frac{\lambda}{A}\big)}-C_0^d\lambda \right]\\
&\ge \omega_{d-1}\left[\frac{A\sqrt3}{12} r_{\frac{\lambda}{A}}\left(1+e^{-r_{\frac{\lambda}{A}}}\right)+A-C_0^d\lambda \right].
\end{align*}
Note that $r_{\frac{\lambda}{A}}\rightarrow+\infty$ as $\lambda$ tends to $0^+$. It follows that
\begin{align*}
\sup_{\lambda>0}\lambda\left|\left\{x\in\mathbb R^d:M_\psi f(x)>\lambda\right\}\right|=+\infty.
\end{align*}
This proves $M_\psi$ is not of weak type $(1,1)$.

\begin{remark}\label{another proof}
(1). The negative result in Theorem \ref{notw11} can be interpreted in another way. Let $K\in L^1(\mathbb R^d)$ and $\{t_j\}$ be a nonincreasing sequence of positive number tending to $0$ which is non-lacunary, i.e. $t_{j+1}/t_j\rightarrow1$ as $j\rightarrow\infty$. The non-lacunary maximal operator associated to $K$ is given by
\begin{align*}
K^\ast f(x)=\sup_{j\in\mathbb Z}|K_{t_j}\ast f(x)|.
\end{align*}
Carrillo and Guzm\'{a}n  proved that if  $K^\ast$
is of weak type $(1,1)$, then there exists a constant $C_d>0$ such that for each $\lambda>0$ we have
\begin{align}\label{HK}
\left|\left\{x'\in\mathbb S^{d-1}:H_K(x')>\lambda\right\}\right|\le\frac {C_d}\lambda,
\end{align}
where 
\begin{align*}
H_K(x')=\esssup_{r>0}r^d|K(rx')|,\ \ x'\in\mathbb S^{d-1}. 
\end{align*}
see \cite[Theorem 7]{CG82}. For the kernel $\psi\in L^1(\mathbb R^d)$, $H_\psi(x')=+\infty$ for any $x'\in\mathbb S^{d-1}$. Obviously, \eqref{HK} does not hold, it follows that any non-lacunary maximal operator associated with $\psi$ is not of weak type $(1,1)$. Moreover, $M_\psi$ is not of weak type $(1,1)$. 
\par
(2). Note that $H^1(\mathbb R^d)\subsetneqq L^1(\mathbb R^d)$.Whether a weaker endpoint inequality holds, namely that $M_\psi$ maps $H^1(\mathbb R^d)$ to $L^{1,\infty}(\mathbb R^d)$? Let $B^-(0,1)=\{x\in B(0,1):x_1\le0\}$ and $B^+(0,1)=\{x\in B(0,1):x_1>0\}$, $f(y)=\chi_{B^-(0,1)}(y)-\chi_{B^+(0,1)}(y)$, then $f$ is an $L^\infty$-atom and $f\in H^1(\mathbb R^d)$. A similar calculation
is valid, we also have 
\begin{align*}
\sup_{\lambda>0}\lambda\left|\left\{x\in\mathbb R^d:M_\psi f(x)>\lambda\right\}\right|=+\infty.
\end{align*}
Hence, $M_\psi$ does not map $H^1(\mathbb R^d)$ to $L^{1,\infty}(\mathbb R^d)$.
\end{remark}

\subsection{Proof of Theorem \ref{dyad w11}}
Set $\varphi(y)=-\ln(1-|y|)\chi_{B(0,1)}(y)$ and define
$$
M_\varphi^Df(x)=\sup_{k\in\mathbb Z}\left|\varphi_{2^k}\ast f(x)\right|.
$$
Now observe that
\begin{align*}
M^D_\psi f(x)\le M_\varphi^Df(x)\le M_\psi^Df(x)+\ln2\cdot Mf(x)
\end{align*}
It suffices to show that $M_\varphi^D$ is of weak type $(1,1)$.
\par
Theorem \ref{ppe} implies that $M_\varphi^D$ is bounded on $L^p(\mathbb R^d)$ for $1<p\le\infty$. We apply vector-valued estimates for the sublinear operator (see \cite[Corollary 5.6.5]{G14}), it suffices to verify the following regularity condition
\begin{align*}
\sup_{y\neq0}\int_{|x|\ge2|y|}\sup_{j\in\mathbb Z}\left|\varphi_{2^j}(x-y)-\varphi_{2^j}(x)\right|dx<\infty.
\end{align*}
\par
Without loss of generality, we assume that $y=(|y|,0,\cdots,0)$ and $|y|\neq0$. Then, there is a $k_0\in\mathbb Z$ such that  $|y|\in[2^{k_0-1},2^{k_0})$. For $|x|\ge2|y|$, we observe that $x_1>\frac{|y|}2$ if and only if $|x|>|x-y|$.
\par
Set $R_{k_0}=\{x:2|y|\le |x|<2^{k_0+1}\}$, $R_k=\{x:2^k\le |x|<2^{k+1}\}$ for $k\ge k_0+1$. We write
\begin{align*}
\int_{|x|\ge2|y|}\sup_{j\in\mathbb Z}\left|\varphi_{2^j}(x-y)-\varphi_{2^j}(x)\right|dx=\sum_{k\ge k_0}\int_{R_k}\sup_{j\in\mathbb Z}\left|\varphi_{2^j}(x-y)-\varphi_{2^j}(x)\right|dx.
\end{align*}
\par
To estimate each summand, we divide each $R_k$ into four parts: \\
$$R_{k,1}:=\left\{x\in R_k:x_1>\frac{|y|}2,x\notin B(y,2^{k})\right\},$$
$$R_{k,2}:=\left\{x\in R_k:x_1>\frac{|y|}2,x\in B(y,2^{k})\right\},$$  
$$R_{k,3}:=\left\{x\in R_k:x_1\le\frac{|y|}2,x\in B(y,2^{k+1})\right\},$$
$$R_{k,4}:=\left\{x\in R_k:x_1\le\frac{|y|}2,x\notin B(y,2^{k+1})\right\}.$$
\begin{figure}[h]
    \centering
    \begin{subfigure}[b]{0.49\textwidth}
    \includegraphics[width=\textwidth]{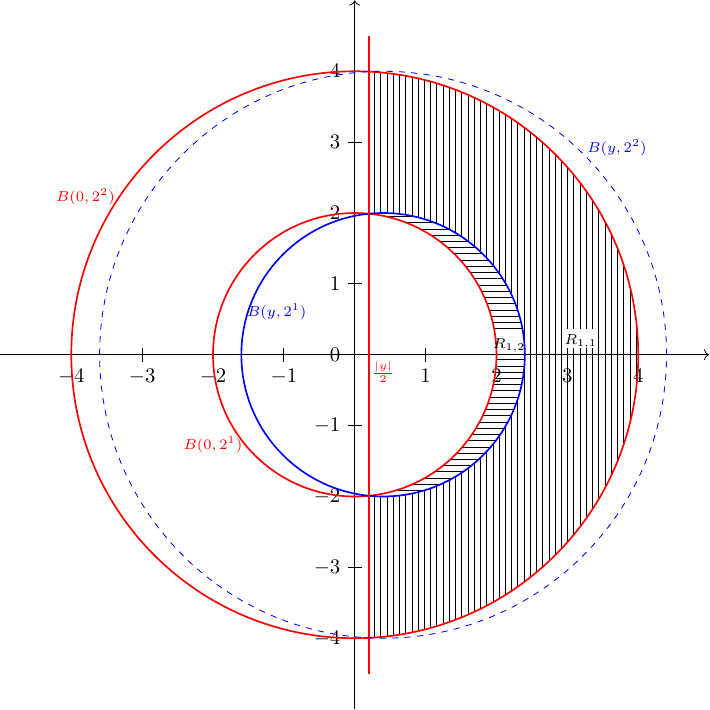} 
    \caption{$R_{1,1},R_{1,2}$}
    \label{fig:my_image} 
\end{subfigure}
\begin{subfigure}[b]{0.49\textwidth}
    \includegraphics[width=\textwidth]{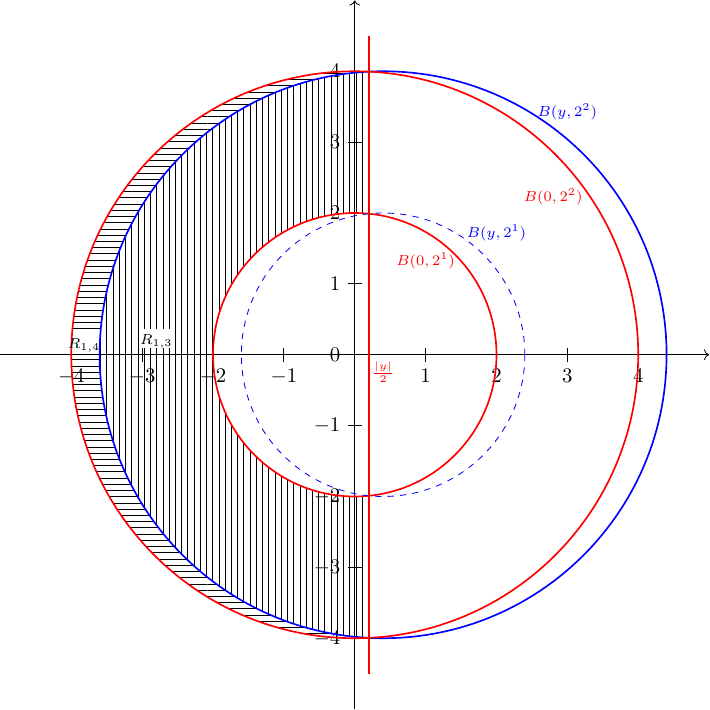} 
    \caption{$R_{1,3},R_{1,4}$}
    \label{fig:my_image} 
\end{subfigure}
\end{figure}
\par
For each summand, we write it as the sum of four terms
\begin{align*}
\int_{R_k}\sup_{j\in\mathbb Z}\left|\varphi_{2^j}(x-y)-\varphi_{2^j}(x)\right|dx&=\sum_{i=1}^4\int_{R_{k,i}}\sup_{j\in\mathbb Z}\left|\varphi_{2^j}(x-y)-\varphi_{2^j}(x)\right|dx.
\end{align*}
We work with summands for $k\ge k_0+1$, the summand with $k=k_0$ can be treated similarly.
\par
If $x\in R_{k,1}$, then $2^{k}\le|x-y|<|x|<2^{k+1}$.  For $j\le k$, $\varphi_{2^j}(x-y)=\varphi_{2^j}(x)=0$. For $j\ge k+1$, we get $\varphi_{2^j}(x)>\varphi_{2^j}(x-y)>0$, and
\begin{align*}
\left|\varphi_{2^j}(x)-\varphi_{2^j}(x-y)\right|=\varphi_{2^j}(x)-\varphi_{2^j}(x-y)=\frac1{2^{jd}}\ln\left(1+\frac{|x|-|x-y|}{2^j-|x|}\right).
\end{align*}
Observe that $\varphi_{2^j}(x)-\varphi_{2^j}(x-y)$ is decreasing with respect to $j$ when $j\ge k+1$. Thus,
\begin{align*}
\sup_{j\in\mathbb Z}\big|\varphi_{2^j}(x-y)-\varphi_{2^j}(x)\big|=\frac1{2^{(k+1)d}}\ln\left(1+\frac{|x|-|x-y|}{2^{k+1}-|x|}\right).
\end{align*}
Note that $|x|-|x-y|\le |y|$. Therefore, using the change of variables we deduce that
\begin{align*}
\int_{R_{k,1}}\sup_{j\in\mathbb Z}\left|\varphi_{2^j}(x-y)-\varphi_{2^j}(x)\right|dx&\le\frac1{2^{(k+1)d}}\int_{2^k\le|x|<2^{k+1}}\ln\left(1+\frac{|y|}{2^{k+1}-|x|}\right)dx\\
&\le\frac{\omega_{d-1}|y|}{2^{k+1}}\int_{\frac {|y|}{2^k}}^\infty\frac{\ln(1+s)}{s^2}ds\\
&\le 3\omega_{d-1}\frac{k-k_0}{2^{k-k_0}}.
\end{align*}
\par
If $x\in R_{k,2}$, then $2^k-|y|\le|x-y|<2^k$. For $j<k$, $\frac{|x|}{2^j}>\frac{|x-y|}{2^j}\ge\frac{|x|-|y|}{2^j}\ge \frac{|x|}{2^{j+1}}\ge1$, so $\varphi_{2^j}(x-y)=\varphi_{2^j}(x)=0$. 
For $j=k$, $\frac{|x-y|}{2^j}<1$ and $\frac{|x|}{2^j}\ge1$, so $\varphi_{2^j}(x-y)>0$ and $\varphi_{2^j}(x)=0$. Then,
\begin{align*}
\left|\varphi_{2^j}(x)-\varphi_{2^j}(x-y)\right|=\varphi_{2^k}(x-y).
\end{align*}
For $j\ge k+1$, $\frac{|x-y|}{2^j}<\frac{|x|}{2^j}<1$, so $\varphi_{2^j}(x)>\varphi_{2^j}(x-y)>0$. We deduce that
\begin{align*}
\left|\varphi_{2^j}(x)-\varphi_{2^j}(x-y)\right|=\varphi_{2^j}(x)-\varphi_{2^j}(x-y)\le \varphi_{2^{k+1}}(x)-\varphi_{2^{k+1}}(x-y).
\end{align*}
\par
Moreover, for $x\in R_{k,2}$, we have $|x-y|<2^k\le|x|$. Observe that  $|x-y|>|y|$ gives 
\begin{align*}
\frac{|x-y|}{2^{k}}\left(\frac32-\frac{|x-y|}{2^{k}}\right)>\frac{|y|}{2^{k+1}}. 
\end{align*}
It follows that 
\begin{align*}
\frac{|x-y|}{2^{k-1}}\ge\frac{|x-y|+|y|}{2^{k+1}}+\frac{|x-y|^2}{2^{2k}}\ge \frac{|x|}{2^{k+1}}+\frac{|x-y|^2}{2^{2k}},
\end{align*}
and
\begin{align*}
1- \frac{|x|}{2^{k+1}}\ge \left(1-\frac{|x-y|}{2^{k}}\right)^2\ge\left(1-\frac{|x-y|}{2^{k}}\right)^{2^d}.
\end{align*}
Thus we prove that
\begin{align*}
\varphi_{2^{k+1}}(x)-\varphi_{2^{k+1}}(x-y)\le \varphi_{2^{k+1}}(x)&=-\frac1{2^{(k+1)d}}\ln\left(1- \frac{|x|}{2^{k+1}}\right)\\
&\le -\frac1{2^{kd}}\ln\left(1-\frac{|x-y|}{2^{k}}\right)=\varphi_{2^{k}}(x-y),
\end{align*}
which yields
\begin{align*}
\sup_{j\in\mathbb Z}\left|\varphi_{2^j}(x-y)-\varphi_{2^j}(x)\right|=-\frac1{2^{kd}}\ln\left(1-\frac{|x-y|}{2^{k}}\right).
\end{align*}
Therefore, using the change of variables we have
\begin{align*}
\int_{R_{k,2}}\sup_{j\in\mathbb Z}\left|\varphi_{2^j}(x-y)-\varphi_{2^j}(x)\right|dx&\le-\frac1{2^{kd}}\int_{2^k-|y|\le|x-y|<2^k}\ln\left(1-\frac{|x-y|}{2^{k}}\right)dx\\
&\le-\omega_{d-1}\int_{1-\frac{|y|}{2^k}}^1\ln(1-s)ds\le 3\omega_{d-1}\frac{k-k_0}{2^{k-k_0}}.
\end{align*}
\par
 If $x\in R_{k,3}$, then $2^k\le|x|<|x-y|<2^{k+1}$. For $j\le k$, since $\frac{|x-y|}{2^j}\ge\frac{|x|}{2^j}\ge1$, this implies that $\varphi_{2^j}(x-y)=\varphi_{2^j}(x)=0$. 
For $j\ge k+1$, $\frac{|x|}{2^j}<\frac{|x-y|}{2^j}<1$, $\varphi_{2^j}(x-y)>\varphi_{2^j}(x)>0$. As in the case $x\in R_{k,1}$ we have
\begin{align*}
\sup_{j\in\mathbb Z}\left|\varphi_{2^j}(x)-\varphi_{2^j}(x-y)\right|&=\varphi_{2^{k+1}}(x-y)-\varphi_{2^{k+1}}(x)\\
&=\frac1{2^{(k+1)d}}\ln\left(1+\frac{|x-y|-|x|}{2^{k+1}-|x-y|}\right).
\end{align*}
Note that $|x-y|-|x|\le |y|$. Therefore, using the change of variables we deduce that
\begin{align*}
\int_{R_{k,3}}\sup_{j\in\mathbb Z}\left|\varphi_{2^j}(x-y)-\varphi_{2^j}(x)\right|dx&\le\frac1{2^{(k+1)d}}\int_{2^k\le|x-y|<2^{k+1}}\ln\left(1+\frac{|y|}{2^{k+1}-|x-y|}\right)dx\\
&\le\frac{\omega_{d-1}|y|}{2^{k+1}}\int_{\frac {|y|}{2^k}}^\infty\frac{\ln(1+s)}{s^2}ds\\
&\le 3\omega_{d-1}\frac{k-k_0}{2^{k-k_0}}.
\end{align*}
\par
 If $x\in R_{k,4}$, then $2^{k+1}-|y|\le|x|<2^{k+1}$. For $j\le k$, $\frac{|x-y|}{2^j}>\frac{|x|}{2^j}\ge\frac{|x-y|-|y|}{2^j}\ge \frac{|x-y|}{2^{j+1}}\ge1$, it follows that $\varphi_{2^j}(x-y)=\varphi_{2^j}(x)=0$. For $j= k+1$, $\frac{|x-y|}{2^j}\ge1>\frac{|x|}{2^{j}}$, so $\varphi_{2^j}(x)>0$, $\varphi_{2^j}(x-y)=0$ and
\begin{align*}
\left|\varphi_{2^j}(x)-\varphi_{2^j}(x-y)\right|=\varphi_{2^{k+1}}(x).
\end{align*}
For $j\ge k+2$, $\frac{|x|}{2^j}<\frac{|x-y|}{2^j}<\frac{|x|+|y|}{2^j}<1$, then $\varphi_{2^j}(x-y)>\varphi_{2^j}(x)>0$ and
\begin{align*}
\left|\varphi_{2^j}(x)-\varphi_{2^j}(x-y)\right|=\varphi_{2^j}(x-y)-\varphi_{2^j}(x)\le \varphi_{2^{k+2}}(x-y)-\varphi_{2^{k+2}}(x).
\end{align*}
\par
Moreover, for $x\in R_{k,4}$, we have $|x|<2^{k+1}<|x-y|$. Observe that $|x|>|y|$ gives that $\frac{|x|}{2^{k+1}}\big(\frac32-\frac{|x|}{2^{k+1}}\big)\ge\frac{|y|}{2^{k+2}}$. It follows that
\begin{align*}
\frac{|x|}{2^k}&\ge\frac{|x|+|y|}{2^{k+2}}+\frac{|x|^2}{2^{2(k+1)}}\ge \frac{|x-y|}{2^{k+2}}+\frac{|x|^2}{2^{2(k+1)}}
\end{align*}
and
\begin{align*}
1- \frac{|x-y|}{2^{k+2}}\ge \bigg(1-\frac{|x|}{2^{k+1}}\bigg)^2\ge\bigg(1-\frac{|x|}{2^{k+1}}\bigg)^{2^d}.
\end{align*}
Thus we proved that
\begin{align*}
\varphi_{2^{k+2}}(x-y)-\varphi_{2^{k+2}}(x)<\varphi_{2^{k+2}}(x-y)&=-\frac1{2^{(k+2)d}}\ln\left(1- \frac{|x-y|}{2^{k+2}}\right)\\
&\le -\frac1{2^{(k+1)d}}\ln\left(1-\frac{|x|}{2^{k+1}}\right)=\varphi_{2^{k+1}}(x),
\end{align*}
which yields
\begin{align*}
\sup_{j\in\mathbb Z}\left|\varphi_{2^j}(x-y)-\varphi_{2^j}(x)\right|=\varphi_{2^{k+1}}(x).
\end{align*}
Therefore, using the change of variables we deduce that
\begin{align*}
\int_{R_{k,4}}\sup_{j\in\mathbb Z}\left|\varphi_{2^j}(x-y)-\varphi_{2^j}(x)\right|dx&\le-\frac1{2^{(k+1)d}}\int_{2^{k+1}-|y|\le|x|<2^{k+1}}\ln\left(1-\frac{|x|}{2^{k+1}}\right)dx\\
&\le-\omega_{d-1}\int_{1-\frac{|y|}{2^{k+1}}}^1\ln(1-s)ds\le 3\omega_{d-1}\frac{k-k_0}{2^{k-k_0}}.
\end{align*}
\par
Finally, combining above four estimates, we obtain
\begin{align*}
\int_{|x|\ge2|y|}\sup_{j\in\mathbb Z}\left|\varphi_{2^j}(x-y)-\varphi_{2^j}(x)\right|dx&=\sum_{k\ge k_0}\int_{R_k}\sup_{j\in\mathbb Z}\left|\varphi_{2^j}(x-y)-\varphi_{2^j}(x)\right|dx\\
&\le 12\omega_{d-1}\sum_{k\ge k_0+1}\frac{k-k_0}{2^{k-k_0}}\le 24\omega_{d-1}.
\end{align*}

\section{Dimension-free estimates for $M_\psi$} For $d\ge2$ and $1<k\le d$,
we denote by $d\sigma$ the normalized Lebesgue measure on the sphere $\mathbb S^{k-1}$. For $f\in L^p(\mathbb R^k)$, $1\le p\le\infty$, we define the spherical maximal function in $\mathbb R^k$
\begin{align*}
\mathscr M_k(f)(x)=\sup_{t>0}\left|\int_{\mathbb S^{k-1}}f(x-ty')d\sigma(y')\right|.
\end{align*}
$\mathscr M_k$ is bounded in $L^p(\mathbb R^k)$ for some $p$ and $k$, with a bound dependent on the dimension $k$, according to the following theorem due to Stein \cite[Theorem 1]{S76}.
\begin{lemma}\label{bounds for sph}
Let $k\ge3$. For each $\frac{k}{k-1}<p\le\infty$, there is a constant $C(k,p)$ such that 
\begin{align*}
\|\mathscr M_k(f)\|_{L^p(\mathbb R^k)}\le C(k,p)\|f\|_{L^p(\mathbb R^k)}
\end{align*}
holds for all $f$ in $L^p(\mathbb R^k)$.
\end{lemma}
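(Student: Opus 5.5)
This lemma is Stein's spherical maximal theorem (\cite[Theorem 1]{S76}), which the paper quotes rather than reproves. The standard proof runs as follows.

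The case $p=\infty$ is immediate, since each spherical average is bounded by $\|f\|_\infty$. For $\frac{k}{k-1}<p<\infty$ we embed the spherical average into the analytic family of maximal means $\mathscr M^\alpha$ from the introduction. We take $\mathscr M^\alpha$ in $\mathbb R^k$, with kernel
\begin{align*}
m_\alpha(y)=\frac{1}{\Gamma(\alpha)}(1-|y|^2)_+^{\alpha-1}.
\end{align*}
Up to a normalizing constant, $m_\alpha\,dy$ tends to $d\sigma$ as $\alpha\to0^+$. We extend $\alpha$ analytically to complex values, so that $\alpha=0$ corresponds to $\mathscr M_k$. The Fourier transform is explicit:
\begin{align*}
\widehat{m_\alpha}(\xi)=c\,|\xi|^{-k/2-\alpha+1}J_{k/2+\alpha-1}(2\pi|\xi|),
\end{align*}
which decays like $|\xi|^{-(k-1)/2-\mathrm{Re}\,\alpha}$.

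\textbf{Step 1 ($L^2$ bound).} For $\mathrm{Re}\,\alpha>1-\frac k2+\frac12$ we prove $L^2$ boundedness by the square function method. Write
\begin{align*}
\sup_t|m_{\alpha,t}\ast f|\le |M f|\text{-type term}+\Big(\int_0^\infty |m_{\alpha,t}\ast f|\,\Big|t\tfrac{d}{dt}m_{\alpha,t}\ast f\Big|\frac{dt}{t}\Big)^{1/2}\cdot C.
\end{align*}
This comes from the fundamental theorem of calculus in $t$, combined with Cauchy--Schwarz and Plancherel. It reduces to bounding
\begin{align*}
\int_0^\infty |\widehat{m_\alpha}(t\xi)|^2\frac{dt}{t}
\quad\text{and}\quad
\int_0^\infty |t\xi\cdot\nabla\widehat{m_\alpha}(t\xi)|^2\frac{dt}{t}
\end{align*}
uniformly in $\xi$. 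For small $t$, we use a comparison with a smooth approximate identity, whose maximal function is controlled by the Hardy--Littlewood maximal function; this handles the region where $\widehat{m_\alpha}$ does not decay. The Bessel asymptotics give the needed decay at infinity exactly when $2\big(\frac{k-1}{2}+\mathrm{Re}\,\alpha\big)-2>0$. In particular, $\alpha=0$ works when $k\ge3$.

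\textbf{Step 2 ($L^1$-type bound).} For $\mathrm{Re}\,\alpha>1$, the kernel $m_\alpha$ is bounded and compactly supported. Hence the maximal function is dominated by $|\Gamma(\alpha)|^{-1}$ times the Hardy--Littlewood maximal function, which is bounded on $L^p$ for all $p>1$. The $\Gamma$ factor grows at most exponentially in $|\mathrm{Im}\,\alpha|$, which is admissible.

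\textbf{Step 3 (interpolation).} To apply Stein's analytic interpolation to a maximal operator, we linearize it: choose a measurable $t(x)$ and study $f\mapsto m_{\alpha,t(x)}\ast f(x)$. Interpolating between the line $\mathrm{Re}\,\alpha=1+\varepsilon$ (bounded on $L^{p_0}$ for $p_0$ near $1$) and the line $\mathrm{Re}\,\alpha=\frac{2-k}{2}+\varepsilon$ (bounded on $L^2$), we reach $\alpha=0$ on $L^p$ whenever $\frac1p<\frac{k-1}{k}$. Letting $\varepsilon\to0$ and $p_0\to1$ covers the whole range $p>\frac{k}{k-1}$. The case $p\ge2$ follows by interpolation with $L^\infty$.

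The main obstacle is Step 1. One needs uniform square-function bounds with admissible growth in $\mathrm{Im}\,\alpha$, which requires careful Bessel asymptotics that are uniform in complex order. The small-$t$ comparison with a smooth kernel also needs care. Since this is classical, in the paper we would simply cite \cite{S76}.
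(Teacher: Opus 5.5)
Citing \cite{S76} is all the paper does, so your bottom line agrees with it. However, your sketch of Stein's argument has a genuine gap in Step 1, and that is exactly the step that produces the exponent $\frac{k}{k-1}$.

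As you describe it, after the fundamental theorem of calculus and Cauchy--Schwarz you need both square functions $\big(\int_0^\infty|(m_\alpha)_t\ast f|^2\frac{dt}{t}\big)^{1/2}$ and $\big(\int_0^\infty|t\partial_t(m_\alpha)_t\ast f|^2\frac{dt}{t}\big)^{1/2}$ to be bounded on $L^2$. Set $a=\frac{k-1}{2}+\mathrm{Re}\,\alpha$. The two multipliers decay like $r^{-a}$ and $r^{1-a}$, since differentiating the Bessel factor gains no decay. So the second square function forces $a>1$. This is your own condition $2\big(\frac{k-1}{2}+\mathrm{Re}\,\alpha\big)-2>0$, i.e.\ $\mathrm{Re}\,\alpha>\frac{3-k}{2}$, which is half a unit short of what you use later:
\begin{itemize}
\item it does not make $\alpha=0$ bounded on $L^2$ when $k=3$, so ``$\alpha=0$ works when $k\ge3$'' fails under your criterion;
\item it does not give the $L^2$ line $\mathrm{Re}\,\alpha=\frac{2-k}{2}+\varepsilon$ that you invoke in Step 3.
\end{itemize}
If you feed the line you actually proved, $\mathrm{Re}\,\alpha=\frac{3-k}{2}+\varepsilon$, into Step 3, you get $\theta=\frac{2}{k-1}$. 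That yields only $p>\frac{k-1}{k-2}$ for $k\ge4$. For $k=3$ it yields nothing, because both endpoint lines lie in $\mathrm{Re}\,\alpha>0$.

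The missing idea is to localize in frequency before applying Cauchy--Schwarz, so that the two decay rates are averaged instead of paying the worse one. Write $\widehat{m_\alpha}=\sum_{j\ge0}\widehat{m_\alpha}\,\widehat{\varphi_j}=:\sum_{j\ge0}\widehat{m_\alpha^j}$, where $\widehat{\varphi_0}\in C_c^\infty$ and $\widehat{\varphi_j}$ is supported in $|\xi|\sim2^j$ for $j\ge1$. The $j=0$ piece has a Schwartz kernel, so it is controlled by $M$. For $j\ge1$ two things happen:
\begin{itemize}
\item $\widehat{m_\alpha^j}(0)=0$, so no boundary term arises in the fundamental theorem of calculus;
\item $t$ is effectively confined to $t|\xi|\sim2^j$, so the two square functions of $m_\alpha^j$ have $L^2$ norms $\lesssim2^{-ja}$ and $\lesssim2^{j(1-a)}$.
\end{itemize}
Your argument applied piece by piece then gives
\begin{align*}
\Big\|\sup_{t>0}\big|(m_\alpha^j)_t\ast f\big|\Big\|_{L^2}\lesssim\big(2^{-ja}\,2^{j(1-a)}\big)^{1/2}\|f\|_{L^2}=2^{j(\frac12-a)}\|f\|_{L^2}.
\end{align*}
This sums in $j$ exactly when $a>\frac12$, i.e.\ $\mathrm{Re}\,\alpha>1-\frac k2$. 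Stein's original proof obtains the same half-power gain from $g$-functions of fractional order $\beta>\frac12$. With this threshold, and constants of admissible growth in $\mathrm{Im}\,\alpha$, your Steps 2 and 3 go through as written.

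The weaker range $p>\frac{k-1}{k-2}$ would in fact suffice for the paper's proof of Theorem \ref{dfppe}, by taking $k$ one larger and adjusting the small-$d$ cutoff. It does not, however, prove the lemma as stated.
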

We now consider $\mathbb R^d$ with $d\ge3$. For $1<k<d$, we write $\mathbb R^d$ as $\mathbb R^k\times \mathbb R^{d-k}$ and denote an $y\in\mathbb R^d$ as a pair $y=(\widetilde{y_1},\widetilde{y_2})$ with $\widetilde{y_1}\in\mathbb R^k$, $\widetilde{y_2}\in\mathbb R^{d-k}$. For each $\tau$ in the orthogonal group $\mathcal O(d)$ we define $M_k^\tau$ as
\begin{align*}
M_k^\tau f(x)=\sup_{t>0}\frac{(-1)}{t^d}\int_{\mathbb R^k}\ln\left(1-\left(\frac{|\widetilde{y_1}|}t\right)^2\right)\left|f\left(x-\tau(\widetilde{y_1},0)\right)\right||\widetilde{y_1}|^{d-k}d\widetilde{y_1},\ \ x\in\mathbb R^d.
\end{align*} 
Given $\tau\in\mathcal O(d)$, we set $f_\tau(x)=f(\tau x)$ for each $x\in\mathbb R^d$. The mapping $S_\tau:f\mapsto f_\tau$ is an isometry of $L^p(\mathbb R^d)$. It follows that
$S_\tau M_k^\tau=M_k^{Id}S_\tau$ and
\begin{align}\label{iso}
\|M_k^{\tau} f\|_{L^p(\mathbb R^d)}=\|M_k^{Id} f\|_{L^p(\mathbb R^d)},
\end{align}
where $Id$ is the identity rotation and
\begin{align*}
M_k^{Id} f(x)=\sup_{t>0}\frac{1}{t^d}\int_{\mathbb R^k}\ln\left(1-\left(\frac{|\widetilde{y_1}|}t\right)^2\right)\left|f(\widetilde{x_1}-\widetilde{y_1},\widetilde{x_2})\right||\widetilde{y_1}|^{d-k}d\widetilde{y_1}.
\end{align*} 
Using polar coordinates we observe that
\begin{align*}
&\frac{(-1)}{t^d}\int_{\mathbb R^k}\ln\left(1-\left(\frac{|\widetilde{y_1}|}t\right)^2\right)\left|f(\widetilde{x_1}-\widetilde{y_1},\widetilde{x_2})\right||\widetilde{y_1}|^{d-k}d\widetilde{y_1}\\
=&\frac{(-1)\omega_{k-1}}{t^d}\int_0^t\ln\left(1-\left(\frac{r}t\right)^2\right)r^{d-1}\int_{\mathbb S^{k-1}}\left|f\left(\widetilde{x_1}-r\widetilde{y_1}',\widetilde{x_2}\right)\right|d\sigma(\widetilde{y_1}')dr\\
\le &\frac{\omega_{k-1}}{\omega_{d-1}}\|\psi\|_{L^1(\mathbb R^d)}\mathscr M_{k}(f_{\widetilde{x_2}})(\widetilde{x_1}),
\end{align*} 
where $f_{\widetilde{x_2}}(\widetilde{y_1})=f(\widetilde{y_1},\widetilde{x_2})$.
Thus, we have
\begin{align}\label{conbysph}
M_k^{Id} f(x)\le \frac{\omega_{k-1}}{\omega_{d-1}}\|\psi\|_{L^1(\mathbb R^d)}\mathscr M_{k}(f_{\widetilde{x_2}})(\widetilde{x_1}).
\end{align} 
\eqref{iso},\eqref{conbysph} and Lemma \ref{bounds for sph} implies
\begin{align}\label{Mktaupp}
\nonumber\|M_k^{\tau} f\|^p_{L^p(\mathbb R^d)}&\le\left(\frac{\omega_{k-1}}{\omega_{d-1}}\right)^p\|\psi\|^p_{L^1(\mathbb R^d)}\int_{\mathbb R^{d-k}}\int_{\mathbb R^k}\left|\mathscr M_{k}(f_{\widetilde{x_2}})(\widetilde{x_1})\right|^pd\widetilde{x_1}d\widetilde{x_2}\\
\nonumber&\le C(k,p)^p \left(\frac{\omega_{k-1}}{\omega_{d-1}}\right)^p\|\psi\|^p_{L^1(\mathbb R^d)}\int_{\mathbb R^{d-k}}\int_{\mathbb R^k}\left|f_{\widetilde{x_2}}(\widetilde{x_1})\right|^pd\widetilde{x_1}d\widetilde{x_2}\\
&=C(k,p)^p\left(\frac{\omega_{k-1}}{\omega_{d-1}}\right)^p\|\psi\|^p_{L^1(\mathbb R^d)}\|f\|^p_{L^p(\mathbb R^d)},
\end{align}
where $k\ge3$ and $p>\frac k{k-1}$.
\begin{proposition}\label{MpsicontrbyMkt}
For every locally integrable function $f$ on $\mathbb R^d$ and $1<k<d$, one has the following pointwise inequality
\begin{align*}
M_\psi f(x)\le \frac{\omega_{d-1}}{\omega_{k-1}}\int_{\mathcal O(d)}M_k^\tau f(x)d\tau, \ \ x\in\mathbb R^d,
\end{align*}
where $d\tau$ denotes the normalized Haar measure on $\mathcal O(d)$.
\end{proposition}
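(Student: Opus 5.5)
The plan is to use the method of rotations: average over $\mathcal O(d)$ and integrate in polar coordinates, first on the $k$-dimensional slice and then on all of $\mathbb R^d$. By translation invariance, and since both sides involve $|f|$ after bounding $|\psi_t\ast f|\le\psi_t\ast|f|$, it suffices to show that for each fixed $t>0$ and $x$,
\begin{align*}
\psi_t\ast|f|(x)\le\frac{\omega_{d-1}}{\omega_{k-1}}\int_{\mathcal O(d)}\frac{(-1)}{t^d}\int_{\mathbb R^k}\ln\left(1-\left(\tfrac{|\widetilde{y_1}|}{t}\right)^2\right)\left|f\left(x-\tau(\widetilde{y_1},0)\right)\right||\widetilde{y_1}|^{d-k}d\widetilde{y_1}\,d\tau.
\end{align*}
The sup over $t$ of the right side is then at most $\frac{\omega_{d-1}}{\omega_{k-1}}\int_{\mathcal O(d)}M_k^\tau f(x)\,d\tau$, because the sup of an integral is dominated by the integral of the sups. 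Measurability of $\tau\mapsto M_k^\tau f(x)$ follows since it is a sup over rational $t$ for nonnegative $f$, with lower semicontinuity in $t$.

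The key step is to compute the right-hand side for fixed $t$. Write the inner integral in polar coordinates on $\mathbb R^k$, $\widetilde{y_1}=r\widetilde{y_1}'$. This gives
\begin{align*}
\frac{-\omega_{k-1}}{t^d}\int_0^t\ln\left(1-\tfrac{r^2}{t^2}\right)r^{d-1}\int_{\mathbb S^{k-1}}|f(x-r\tau(\widetilde{y_1}',0))|\,d\sigma(\widetilde{y_1}')\,dr .
\end{align*}
The weight $|\widetilde{y_1}|^{d-k}$ exactly converts $r^{k-1}$ into $r^{d-1}$, which is the reason for its presence. Next integrate in $\tau$ and use Fubini. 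For each fixed unit vector $\widetilde{y_1}'\in\mathbb S^{k-1}$, the map $\tau\mapsto\tau(\widetilde{y_1}',0)$ pushes the normalized Haar measure forward to the normalized surface measure $\sigma_{d-1}$ on $\mathbb S^{d-1}$, by uniqueness of rotation-invariant probability measures on the sphere. Hence
\begin{align*}
\int_{\mathcal O(d)}\int_{\mathbb S^{k-1}}|f(x-r\tau(\widetilde{y_1}',0))|\,d\sigma(\widetilde{y_1}')\,d\tau=\int_{\mathbb S^{d-1}}|f(x-ry')|\,d\sigma_{d-1}(y').
\end{align*}

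Multiplying by $\frac{\omega_{d-1}}{\omega_{k-1}}$, the right-hand side becomes
\begin{align*}
\frac{-\omega_{d-1}}{t^d}\int_0^t\ln\left(1-\tfrac{r^2}{t^2}\right)r^{d-1}\int_{\mathbb S^{d-1}}|f(x-ry')|\,d\sigma_{d-1}(y')\,dr,
\end{align*}
which is exactly $\psi_t\ast|f|(x)$ written in polar coordinates on $\mathbb R^d$. So the inequality actually holds with equality before taking suprema.

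The main point requiring care is justifying Fubini and the pushforward identity. Since all integrands are nonnegative, Tonelli applies. The pushforward claim follows because the image measure is a probability measure on $\mathbb S^{d-1}$ invariant under left multiplication by $\mathcal O(d)$, hence equal to $\sigma_{d-1}$. Local integrability of $f$ is only needed to make the quantities meaningful; infinite values are allowed throughout.
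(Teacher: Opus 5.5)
Your proposal is correct and follows the paper's method of rotations: polar coordinates on the $k$-dimensional slice, the averaging identity $\int_{\mathbb S^{d-1}}g\,d\sigma=\int_{\mathcal O(d)}\int_{\mathbb S^{k-1}}g(\tau(\widetilde{y_1}',0))\,d\sigma(\widetilde{y_1}')\,d\tau$, and equality of the two averages at each fixed $t$ before taking suprema. The only differences are these. You apply the identity radius by radius, deriving it from the pushforward of Haar measure under $\tau\mapsto\tau(\widetilde{y_1}',0)$, and use Tonelli; this handles every $f$ directly and removes the need for the paper's reduction to products $f_0(|y|)f_1(y')$ plus a density argument. You also make explicit the step $|\psi_t\ast f|\le\psi_t\ast|f|$, which the paper leaves implicit.
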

\begin{proof}
We assume that $f$ has the form $f(y)=f_0(|y|)f_1(y')$, where $y'\in\mathbb S^{d-1}$ and $y=|y|y'$, since linear combinations of such functions are dense. Write $\widetilde{y_1}=|\widetilde{y_1}|\widetilde{y_1}'$, where $\widetilde{y_1}'\in\mathbb S^{k-1}$. Then $f\left(\tau(\widetilde{y_1},0)\right)=f_0(|\widetilde{y_1}|)f_1\left(\tau(\widetilde{y_1}',0)\right)$. By taking polar coordinates, we see that
\begin{align*}
&\frac{(-1)}{t^d}\int_{\mathcal O(d)}\int_{\mathbb R^k}\ln\left(1-\left(\frac{|\widetilde{y_1}|}t\right)^2\right)f\left(\tau(\widetilde{y_1},0)\right)|\widetilde{y_1}|^{d-k}d\widetilde{y_1}d\tau\\
&=\frac{(-1)}{t^d}\int_0^tf_0(r)\ln\left(1-\left(\frac{r}t\right)^2\right)r^{d-1}dr\cdot\omega_{k-1}\cdot\int_{\mathcal O(d)}\int_{\mathbb S^{k-1}}f_1\left(\tau(\widetilde{y_1}',0)\right)d\sigma(\widetilde{y_1}')d\tau
\end{align*} 
and similarly
\begin{align*}
\int_{\mathbb R^d}\psi_t(y) f(y)dy=\frac{(-1)}{t^d}\int_0^tf_0(r)\ln\left(1-\left(\frac{r}t\right)^2\right)r^{d-1}dr\cdot\omega_{d-1}\cdot\int_{\mathbb S^{d-1}}f_1(y')d\sigma(y').
\end{align*}
It was proved that \cite[(4.2)]{SS83} 
\begin{align*}
\int_{\mathbb S^{d-1}}f_1(y')d\sigma(y')=\int_{\mathcal O(d)}\int_{\mathbb S^{k-1}}f_1\left(\tau(\widetilde{y_1}',0)\right)d\sigma(\widetilde{y_1}')d\tau.
\end{align*}
It follows that
\begin{align*}
\int_{\mathbb R^d}\psi_t(y) f(x-y)dy=\frac{\omega_{d-1}}{\omega_{k-1}}\int_{\mathcal O(d)}\frac{(-1)}{t^d}\int_{\mathbb R^k}\ln\left(1-\left(\frac{|\widetilde{y_1}|}t\right)^2\right)f\left(x-\tau(\widetilde{y_1},0)\right)|\widetilde{y_1}|^{d-k}d\widetilde{y_1}d\tau.
\end{align*}
This completes the proof.
\end{proof}
\begin{proof}[Proof of Theorem \ref{dfppe}]
Let $1<p\le+\infty$. When $d\le \max\{4,\frac p{p-1}\}+1$, there is a positive constant $C_p$ independent of the dimension $d$ such that
\begin{align*}
4^d\le -C_p\int_0^1\ln(1-r^2)r^{d-1}dr.
\end{align*}
 Theorem \ref{ppe} implies
\begin{align*}
\|M_\psi f\|_{L^p(\mathbb R^d)}\le -C_p\omega_{d-1}\int_0^1\ln(1-r^2)r^{d-1}dr\cdot\|f\|_{L^p(\mathbb R^d)}\le C_p\|\psi\|_{L^1(\mathbb R^d)}\|f\|_{L^p(\mathbb R^d)}.
\end{align*}
Therefore, we assume that $d>\max\{\frac p{p-1},4\}+1$. We take $k=\lceil \max\{\frac p{p-1},3\}\rceil$ and write $d=k+(d-k)$. Proposition \ref{MpsicontrbyMkt} and \eqref{Mktaupp} yield
\begin{align*}
\|M_\psi f\|_{L^p(\mathbb R^d)}\le \frac{\omega_{d-1}}{\omega_{k-1}}\int_{\mathcal O(d)}\|M_k^\tau f\|_{L^p(\mathbb R^d)}d\tau\le C(k,p)\|\psi\|_{L^1(\mathbb R^d)}\|f\|_{L^p(\mathbb R^d)},
\end{align*}
where $C(k,p)$ is a function of $p$ alone. This completes the proof of Theorem \ref{dfppe}.
\end{proof}

\section{Applications to dimension-free estimates for $R_j^\ast$}
For $j=1,2,\cdots,d$, Cotlar's pointwise inequality implies that
\begin{align*}
R_j^\ast f(x)\le M(R_jf)(x)+C_d M(f)(x),
\end{align*}
and
\begin{align}\label{LpRj}
\|R_j^\ast f\|_{L^p(\mathbb R^d)}\le C_{p,d}\|f\|_{L^p(\mathbb R^d)},\ \ 1<p<\infty.
\end{align}
\par
 In \cite{MV06}, Mateu and Verdera showed that
  \begin{align*}
 K^j(x)=R_j(h)(x),\ \ x\in\mathbb R^d,
 \end{align*}
 where
$$h(x)=\sum_{j=1}^dR_j(K^j)(x).$$ 
 They claimed that $h\in L^1(\mathbb R^d)$, but $h$ has not an integrable radial decreasing majorant. By estimating the size of $h$, they proved that for each $s>1$ there exists a positive constant $C_s$ such that
\begin{align}\label{Ms}
R_j^\ast f(x)\le C_sM\left(|R_jf|^s\right)^{\frac1s}(x)\ \ x\in\mathbb R^d,
\end{align} 
which gives
\begin{align*}
\|R_j^\ast f\|_{L^p(\mathbb R^d)}\le C_{p,d}\|R_jf\|_{L^p(\mathbb R^d)},\ \ 1<p<\infty.
\end{align*}
This improves \eqref{LpRj}. Note that the range of $s$ in \eqref{Ms} is sharp, i.e., above inequality does not hold when $s=1$, see \cite{MOPV10}. 
\par
To prove the following dimension-free $L^2$ estimate
\begin{align}\label{dfRj1}
\|R_j^\ast f\|_{L^2(\mathbb R^d)}\le 2\cdot 10^8\|R_jf\|_{L^2(\mathbb R^d)},\ \  j=1,2,\cdots,d,
\end{align}
 Kucharski and Wr\'{o}bel \cite{KW23} considered a function
$m:[0,+\infty)\rightarrow\mathbb C$ given by
 \begin{align*}
 m(s)=\frac{2^{\frac d2}\Gamma(\frac{d+1}2)}{\sqrt\pi}\int_{2\pi s}^\infty r^{-\frac d2}J_{\frac d2}(r)dr,
 \end{align*}
 and defined the maximal operator
 \begin{align*}
 M^\ast f(x)=\sup_{t>0}\left|M_tf(x)\right|
 \end{align*}
 by setting 
 \begin{align*}
 \widehat{M_tf}(\xi)=m(t|\xi|)\hat{f}(\xi), \ t>0, \ f\in L^2(\mathbb R^d).
 \end{align*}
Moreover, $\hat{h}(\xi)=m(|\xi|)$ and
 \begin{align*}
M_tf(x)=h_t\ast f(x), \ \ f\in L^2(\mathbb R^d).
 \end{align*}
It follows that
  \begin{align}\label{mRjM}
 R^\ast_jf(x)=\sup_{t>0}\left|K^j_t\ast f(x)\right|=\sup_{t>0}\left|h_t\ast (R_jf)(x)\right|=M^\ast(R_jf)(x).
 \end{align}
 To obtain \eqref{dfRj1}, they proved 
 \begin{align}\label{dfMast}
\|M^\ast f\|_{L^2(\mathbb R^d)}\le 2\cdot 10^8\|f\|_{L^2(\mathbb R^d)}.
\end{align}
\par
Recently, Liu, Melentijevi\'{c} and Zhu \cite{LMZ24} showed
 \begin{align}\label{dfMastLMZ}
\|M^\ast f\|_{L^p(\mathbb R^d)}\le \left(2+\frac1{\sqrt2}\right)^{\frac2p}\|f\|_{L^p(\mathbb R^d)},\ \ d\ge2,\ \ p\ge2,
\end{align}
by proving that
\begin{equation*}
   h(x) = \left\{
    \begin{aligned}
    & \frac{\Gamma(\frac{1+d}2)}{\pi^{\frac {d+3}2}}\sum_{n=0}^\infty\frac{\Gamma(n+\frac{1}2)\Gamma(n+\frac{1+d}2)}{\Gamma(n+1+\frac{d}2)n!}|x|^{2n},\ \ |x|<1,\\
    & \frac1{|x|^{1+d}}\frac{\Gamma(\frac{1+d}2)}{\pi^{\frac {d+3}2}}\sum_{n=0}^\infty\frac{\Gamma(n+\frac{1}2)\Gamma(n+\frac{1+d}2)}{\Gamma(n+1+\frac{d}2)n!}\bigg(\frac1{|x|}\bigg)^{2n},\ \ |x|>1,
    \end{aligned}
    \right.
\end{equation*}
$h(x)$ is radial, positive, $\|h\|_{L^1(\mathbb R^d)}=1$ and 
\begin{align*}
\lim_{|x|\rightarrow1^-}h(x)=\lim_{|x|\rightarrow1^+}h(x)=+\infty.
\end{align*}
\par
A natural question is whether dimension-free estimate \eqref{dfMastLMZ} holds for $1<p<2$? We give here a positive answer.
\begin{proposition}\label{dfMastpp}
Let $d\in\mathbb N$. For each $1<p\le\infty$, there is a constant $C_p$ independent of the dimension $d$ such that
\begin{align*}
\|M^\ast f\|_{L^p(\mathbb R^d)}\le C_p\|f\|_{L^p(\mathbb R^d)}.
\end{align*}
\end{proposition}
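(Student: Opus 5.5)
Since $h$ is radial and positive with $\|h\|_{L^1(\mathbb R^d)}=1$, and $M_tf=h_t\ast f$, we have $M^\ast f\le \sup_{t>0}h_t\ast|f|$. The plan is to dominate $h$ pointwise by a combination of kernels whose maximal operators already have dimension-free bounds, with coefficients summing to a dimension-free constant. On $|x|<1$ we compare $h$ with $\psi$. On $|x|>1$ we compare $h$ with a radial decreasing function, or with dilates of $\psi$ reflected through the sphere. The ingredients are Theorem \ref{dfppe} for $\psi$ and the Stein--Str\"omberg dimension-free bound for the Hardy--Littlewood maximal function $M$.

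\textbf{Step 1 (series estimates).} Let $a_n=\frac{\Gamma(n+\frac12)\Gamma(n+\frac{1+d}2)}{\Gamma(n+1+\frac d2)n!}$. For $n\gtrsim d$, Gautschi-type bounds give $a_n\approx n^{-1}$. For $n\lesssim d$, we have $\frac{\Gamma(n+\frac{1+d}2)}{\Gamma(n+1+\frac d2)}\approx (n+d/2)^{-1/2}$ and $\frac{\Gamma(n+\frac12)}{n!}\approx (n+1)^{-1/2}$. Together with $\sum_n |x|^{2n}/(n+1)\le C\bigl(1-\ln(1-|x|^2)\bigr)$, this should yield, for $|x|<1$,
\begin{align*}
h(x)\le C\frac{\Gamma(\frac{1+d}2)}{\pi^{\frac{d+3}2}}\Bigl(d^{-1/2}\sum_{n}\frac{|x|^{2n}}{\sqrt{n+1}}\Bigr)\lesssim c_d\bigl(1+\psi(x)\bigr),
\end{align*}
with a prefactor $c_d$ of order $\Gamma(\frac{d+1}2)\pi^{-\frac{d+3}2}$. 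I would try to arrange this more carefully as $h\chi_{B(0,1)}\le A\,\chi_{B(0,1)}/|B(0,1)|+B\,\psi/\|\psi\|_{L^1}$ with $A,B$ absolute constants. The normalisation check is that both $\int_{|x|<1}h$ and $\|\psi\|_{L^1}\approx \omega_{d-1}\ln d/d$ relative to $\nu_d$ match up to constants; this can be verified using $\|h\|_1=1$ and \eqref{psil1}.

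\textbf{Step 2 (outside the ball).} For $|x|>1$, the formula reads $h(x)=|x|^{-1-d}\,h_{\rm in}(x/|x|^2)$, where $h_{\rm in}$ denotes the inner series. So $h(x)\le C|x|^{-d-1}c_d\bigl(1-\ln(1-|x|^{-2})\bigr)$. I would split this region into two pieces.
\begin{itemize}
\item On $|x|>2$ the bound is $\lesssim c_d|x|^{-d-1}$, which is radial decreasing with $L^1$ mass $\lesssim c_d\omega_{d-1}\lesssim 1$. Its maximal operator is therefore bounded by $CMf$, which is dimension-free.
\item On $1<|x|<2$ the logarithmic singularity $-\ln(1-|x|^{-2})\approx-\ln(|x|-1)$ remains. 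Here I would use the method of rotations of Section 3 directly: write the corresponding truncated kernel in polar coordinates and dominate its averages by $\frac{\omega_{k-1}}{\omega_{d-1}}\|g\|_{L^1}\mathscr M_k$, exactly as in \eqref{conbysph}, Proposition \ref{MpsicontrbyMkt} and \eqref{Mktaupp}. The point is that the proof of Theorem \ref{dfppe} used only that $\psi$ is radial, with spherical averages bounded via Lemma \ref{bounds for sph}. That argument works verbatim for any radial $g$ with $\|g\|_{L^1}$ finite, giving $\|\sup_t g_t\ast|f|\|_p\le C(k,p)\|g\|_{L^1}\|f\|_p$ for $d$ large. For small $d$ we instead use the $M^2$-type argument of Proposition \ref{M2} in the annulus.
\end{itemize}

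In fact the cleanest route is the following. The rotation argument shows that for every radial $g\ge0$ with $g\in L^1$,
\begin{align*}
\|\sup_{t>0}g_t\ast|f|\|_{L^p}\le C_p\|g\|_{L^1}\|f\|_{L^p}, \quad d>\max\{\tfrac p{p-1},4\}+1.
\end{align*}
Applied to $g=h$ with $\|h\|_1=1$, this gives the claim immediately for large $d$. I will present this as the main argument. The series estimates of Steps 1--2 are needed only for the finitely many small dimensions $d\le \max\{4,\frac p{p-1}\}+1$. In those dimensions any $d$-dependent constant is acceptable, since it depends on $p$ alone. There, $h\le C_d(1+\psi+\psi(\cdot/|\cdot|^2)|\cdot|^{-d-1})$ near the sphere, and Proposition \ref{M2}-type Orlicz estimates (the $\exp L$ norm of $\ln|1-|x||$ on $B(0,2)$ is finite) give $M^\ast f\le C_dM^2f$.

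\textbf{Main obstacle.} The step I expect to need care is verifying that the polar-coordinate bound \eqref{conbysph} holds for the kernel $h$. This requires $\int_0^\infty h(r)r^{d-1}\,\mathscr M_k$-averaging with the weight $|\widetilde{y_1}|^{d-k}$. I expect this to go through because the inequality used only positivity and radiality of the kernel. The remaining small-dimension case needs the finiteness of the $\exp L$ norm of the logarithmic singularity at $|x|=1$ from both sides.
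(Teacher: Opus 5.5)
Your proof is correct, and for large $d$ it takes a genuinely different and much shorter route than the paper.

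\textbf{What the paper does.} It never applies the rotation method to $h$ itself. Using Gautschi's inequality, it dominates
\begin{align*}
h\le h^0+h^1+h^{2,1}+h^{2,2}+h^{2,3}+h^{2,4}
\end{align*}
and treats the pieces separately.
\begin{itemize}
\item The pieces $h^0,h^{2,1},h^{2,2},h^{2,3}$ are controlled through their least radial decreasing majorants, hence by $Mf$ with the Stein--Str\"omberg bound.
\item The piece $h^1$ is a multiple of $\psi$. Theorem \ref{dfppe} together with \eqref{psil1} shows that its coefficient is $O(\ln(d+1)/\sqrt d)$.
\item The piece $h^{2,4}$ is controlled by $M_{\tilde\phi}$. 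Its dimension-free bound again comes from the rotation argument combined with \eqref{dfmphi}.
\end{itemize}
This requires tracking the $L^1$ mass of every piece by hand.

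\textbf{What your route does.} You observe that the Section 3 argument uses nothing about $\psi$ except $\psi\ge0$ and radiality. Indeed, for radial $g\ge0$ one has $\sup_t g_t\ast|f|\le\|g\|_{L^1}\mathscr M_d|f|$, and Section 3 is essentially Stein's dimension-free bound for $\mathscr M_d$ on $L^p(\mathbb R^d)$ when $d>k$. So the argument applies directly to $h$, with $\|h\|_1=1$. The series estimates are then needed only in the finitely many dimensions $d\le\max\{4,\frac p{p-1}\}+1$. There the crude bound $M^\ast f\le C_dM^2f$, obtained by the Orlicz argument of Propositions \ref{M2} and \ref{M3}, suffices.

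\textbf{Comparison.} Your route makes the Gamma-function bookkeeping unnecessary for this proposition. It also shows that the specific shape of $h$ matters only in bounded dimension. What the paper's decomposition buys is structural information: the singular parts of $h$ carry a vanishing or bounded share of the mass, and the rest is of Hardy--Littlewood type. The paper also reuses the resulting pointwise domination by $M$, $M_\psi$ and $M_\phi$ in Section 4, for the $M^2$ domination and the lacunary weak $(1,1)$ estimates.

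\textbf{A small slip.} In Step 2 you claim $c_d\omega_{d-1}\lesssim1$. In fact $c_d\omega_{d-1}\approx\sqrt d$. The mass of the $|x|>2$ piece is bounded only after you keep the factors $a_0\approx\sqrt{2\pi/d}$ and $a_n\le (n(n+d/2))^{-1/2}$, which you had in Step 1 but dropped in Step 2. This is harmless, because your Steps 1--2 are only used for bounded $d$.
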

\begin{proof}
One has
\begin{equation*}
   h(x) \le \left\{
    \begin{aligned}
    & \frac{\Gamma\left(\frac{1+d}2\right)}{\pi^{\frac {d+3}2}}\bigg[\frac{\Gamma\left(\frac{1}2\right)\Gamma\left(\frac{1+d}2\right)}{\Gamma\left(1+\frac d2\right)}+\sum_{n=1}^\infty\frac{|x|^{2n}}{n}\bigg],\ \ |x|<1,\\
    & \frac1{|x|^{1+d}} \frac{\Gamma\left(\frac{1+d}2\right)}{\pi^{\frac {d+3}2}}\bigg[\frac{\Gamma\left(\frac{1}2\right)\Gamma\left(\frac{1+d}2\right)}{\Gamma\left(1+\frac d2\right)}+\sum_{n=1}^\infty\frac1{|x|^{2n}\sqrt{n(n+\frac d2)}}\bigg],\ \ |x|>1,
    \end{aligned}
    \right.
\end{equation*}
where we have used Gautchi's inequality \cite[5.6.4]{OLBC}
\begin{align}\label{G1}
x^{1-s}<\frac{\Gamma(x+1)}{\Gamma(x+s)}<(x+1)^{1-s},\ \ \ x>0,\ \ s\in(0,1),
\end{align} 
and
\begin{align*}
\frac1{n\sqrt{4+d}}<\frac{\Gamma\left(n+\frac{1}2\right)\Gamma\left(n+\frac{1+d}2\right)}{n!\Gamma\left(n+1+\frac d2\right)}\le \frac1{\sqrt{n(n+\frac d2)}},\ n\in\mathbb N.
\end{align*}
Define  functions
\begin{align*}
   h^0(x)&=\frac{\Gamma^2\left(\frac{1+d}2\right)}{\pi^{1+\frac {d}2}\Gamma(1+\frac d2)}\bigg(\chi_{B(0,1)}(x)+ \frac1{|x|^{1+d}}\chi_{B^c(0,1)}(x)\bigg),\\
h^1(x)&=-\frac{\Gamma(\frac{1+d}2)}{\pi^{\frac {d+3}2}}\ln(1-|x|^2)\chi_{B(0,1)}(x),\\
h^2(x)&=\frac{\Gamma(\frac{1+d}2)}{\pi^{\frac {d+2}2}|x|^{1+d}}\sum_{n=1}^\infty\frac1{|x|^{2n}\sqrt{n(n+d)}}\chi_{B^c(0,1)}(x).
\end{align*}
It follows that 
\begin{align*}
h(x)\le h^0(x)+h^1(x)+h^2(x)
\end{align*}
and
 \begin{align*}
M^\ast f(x)\le \sup_{t>0}\left|h^0_t\ast f(x)\right|+\sup_{t>0}\left|h^1_t\ast f(x)\right|+\sup_{t>0}\left|h^2_t\ast f(x)\right|.
\end{align*}
\par
Note that $h^0$ is integrable, non-increasging and its least radial decreasing majorant is itself. Moreover, 
\begin{align*}
  \|h^0\|_{L^1(\mathbb R^d)}=\frac{\Gamma^2\left(\frac{1+d}2\right)\omega_{d-1}}{\pi^{1+\frac {d}2}\Gamma(1+\frac d2)}\left(\int_0^1r^{d-1}dr+\int_1^\infty\frac1{r^2}dr\right)\le\frac{4\Gamma^2\left(\frac{1+d}2\right)\pi^{\frac d2}}{\pi^{1+\frac {d}2}\Gamma(\frac d2)\Gamma(1+\frac d2)}\le\frac6{\pi}.
\end{align*}
Therefore, we obtain
\begin{equation*}
   \sup_{t>0}\left|h^0_t\ast f(x)\right|\le\frac6{\pi}Mf(x)
\end{equation*}
and 
\begin{align*}
  \left\| \sup_{t>0}\left|h^0_t\ast f\right|\right\|_{L^p(\mathbb R^d)}\le C_p\|f\|_{L^p(\mathbb R^d)}.
\end{align*}
\par
For the second maximal operator, we note that
\begin{equation*}
   \sup_{t>0}\left|h^1_t\ast f(x)\right|\le\frac{\Gamma(\frac{1+d}2)}{\pi^{\frac {d+3}2}}M_\psi f(x).
\end{equation*}
Theorem \ref{dfppe}, \eqref{psil1} and \eqref{G1} imply that
\begin{align*}
  \left\| \sup_{t>0}|h^1_t\ast f|\right\|_{L^p(\mathbb R^d)}&\le\frac{\Gamma(\frac{1+d}2)}{\pi^{\frac {d+3}2}}\left\|M_\psi f\right\|_{L^p(\mathbb R^d)}\le C_p\frac{\Gamma(\frac{1+d}2)}{\pi^{\frac {d+3}2}}\|\psi\|_{L^1(\mathbb R^d)}\|f\|_{L^p(\mathbb R^d)}\\
&\le C_p\frac{\Gamma(\frac{1+d}2)}{\pi^{\frac {d+3}2}}\frac{6\pi^{\frac d2}\ln(d+1)}{d\Gamma(\frac d2)}\|f\|_{L^p(\mathbb R^d)}\\
&\le C_p\frac{\ln(d+1)}{\sqrt d}\|f\|_{L^p(\mathbb R^d)}.
\end{align*}
\par
To treat the last maximal operator, we divide the kernel and dominate it by the sum of four parts
\begin{align*}
h^2(x)&\le \frac{\Gamma(\frac{1+d}2)}{\sqrt d\pi^{\frac {d+2}2}|x|^{1+d}}\sum_{n=1}^d\frac1{|x|^{2n}\sqrt{n}}\chi_{B^c(0,1)}(x)+\frac{\Gamma(\frac{1+d}2)\ln(|x|)}{\pi^{\frac {d+2}2}|x|^{1+3d}}\chi_{B^c(0,1)}(x)\\
&-\frac{\Gamma(\frac{1+d}2)\ln(|x|-1)}{\pi^{\frac {d+2}2}|x|^{1+3d}}\chi_{B^c(0,2)}(x)-\frac{\Gamma(\frac{1+d}2)\ln(|x|-1)}{\pi^{\frac {d+2}2}|x|^{1+3d}}\chi_{B(0,2)\setminus B(0,1)}(x)\\
&:=h^{2,1}(x)+h^{2,2}(x)+h^{2,3}(x)+h^{2,4}(x).
\end{align*}
For $n=1,2,\cdots,d$, the least radial decreasing majorant of $|x|^{-(1+d+2n)}\chi_{B^c(0,1)}(x)$ is
\begin{equation*}
   H_n(x)=\left\{
    \begin{aligned}
    &1,\ \ |x|<1,\\
    &\frac1{|x|^{1+d+2n}},\ \ |x|>1,
    \end{aligned}
    \right.
\end{equation*}
with $\|H_n\|_{L^1(\mathbb R^d)}\le2\omega_{d-1}/n$. \eqref{G1} yields that
\begin{align*}
   \sup_{t>0}\left|h^{2,1}_t\ast f(x)\right|&\le\frac{2\omega_{d-1}\Gamma(\frac{1+d}2)}{\sqrt d\pi^{\frac {d+2}2}}\sum_{n=1}^d\frac1{n\sqrt n}Mf(x)\\
    &\le \frac{4\Gamma(\frac{1+d}2)}{\sqrt d\pi^{\frac {d+2}2}}\cdot \frac{2\pi^{\frac d2}}{\Gamma(\frac{d}2)}Mf(x)\le 2Mf(x)
\end{align*}
and
\begin{align*}
  \left\| \sup_{t>0}\left|h^{2,1}_t\ast f\right|\right\|_{L^p(\mathbb R^d)}\le C_p\|f\|_{L^p(\mathbb R^d)}.
\end{align*}
\par
Note that $h^{2,2},h^{2,3}$ have the following radial decreasing majorant
\begin{equation*}
  \frac{\Gamma(\frac{1+d}2)}{\pi^{\frac {d+2}2}}\left\{
    \begin{aligned}
    &1,\ \ |x|<1,\\
    &\frac1{|x|^{3d}},\ \ |x|>1,
    \end{aligned}
    \right.
\end{equation*}
with $L^1$ norm less than $1/\sqrt d$. Therefore,
\begin{align*}
   \sup_{t>0}\left|h^{2,2}_t\ast f(x)\right|+\sup_{t>0}\left|h^{2,3}_t\ast f(x)\right|&\le \frac2{\sqrt d}Mf(x)
\end{align*}
and
\begin{align*}
 \left\|\sup_{t>0}\left|h^{2,2}_t\ast f\right|\right\|_{L^p(\mathbb R^d)}+\left\|\sup_{t>0}\left|h^{2,3}_t\ast f\right|\right\|_{L^p(\mathbb R^d)}\le \frac{C_p}{\sqrt d}\|f\|_{L^p(\mathbb R^d)}.
\end{align*}
\par
To deal with the last maximal operator, we set $\phi(x)=-\ln\left(|x|-1\right)\chi_{B(0,2)\setminus B(0,1)}(x)$ and $\tilde{\phi}(x)=-\frac{\ln(|x|-1)}{|x|^{1+3d}}\chi_{B(0,2)\setminus B(0,1)}(x)$, then
\begin{align*}
 M_{\tilde{\phi}} f(x)\le M_{\phi} f(x).
\end{align*}
Using \eqref{dfmphi} and arguments in the proof of Theorem \ref{dfppe}, we get that
there is a positive constant $C_p$ independent of the dimension $d$ such that
\begin{align*}
\|M_{\tilde{\phi}} f\|_{L^p(\mathbb R^d)}\le C_p\|\tilde{\phi}\|_{L^1(\mathbb R^d)}\|f\|_{L^p(\mathbb R^d)}, \ 1<p\le\infty.
\end{align*}
H\"{o}lder's inequality implies 
\begin{align*}
 \|\tilde{\phi}\|_{L^1(\mathbb R^d)}&=-\omega_{d-1}\int_1^2\frac{\ln(r-1)}{r^{2d+2}}dr\le \omega_{d-1}\left(\int_1^2\ln^2(r-1)dr\right)^{\frac12}\left(\int_1^2\frac{dr}{r^{4d+4}}\right)^{\frac12}\le\frac{\omega_{d-1}}{\sqrt{d}}.
\end{align*}
Applying \eqref{G1}, we observe that 
\begin{align*}
  \left\|\sup_{t>0}\left|h^{2,4}_t\ast f\right|\right\|_{L^p(\mathbb R^d)}&\le\frac{\Gamma(\frac{1+d}2)}{\pi^{\frac {d+2}2}}\left\|M_{\tilde{\phi}} f\right\|_{L^p(\mathbb R^d)}\le C_p\frac{\Gamma(\frac{1+d}2)}{\pi^{\frac {d+2}2}}\|\tilde{\phi}\|_{L^1(\mathbb R^d)}\|f\|_{L^p(\mathbb R^d)}\\
  &\le C_p\frac{\Gamma(\frac{1+d}2)}{\pi^{\frac {d+2}2}}\frac{2\pi^{\frac {d}2}}{\Gamma(\frac d2)\sqrt{d}}\|f\|_{L^p(\mathbb R^d)}\le C_p\|f\|_{L^p(\mathbb R^d)}.
\end{align*}
This completes the proof of Proposition \ref{dfMastpp}.
\end{proof}
\par
Using \eqref{mRjM} and Proposition \ref{dfMastpp}, we obtain Theorem \ref{Rdfppe}. Arguments in Remark \ref{another proof} imply $M^\ast$ is not of weak type $(1,1)$, it follows that
 for every $1\le j\le d$ and constant $C>0$, there exists a function $f$ in $H^1(\mathbb R^d)$ such that
\begin{align*}
\|R^\ast_jf\|_{L^{1,\infty}(\mathbb R^d)}\ge C\|R_jf\|_{L^1(\mathbb R^d)}.
\end{align*}
However, Theorem \ref{dyad w11}, Theorem \ref{dyad w11phi} and the weak type $(1,1)$ estimate for Hardy-Littlewood maximal operator yield
\begin{align*}
\lambda\left|\left\{x\in\mathbb R^d:M^{D,\ast} f(x)>\lambda\right\}\right|\le C_d\|f\|_{L^1(\mathbb R^d)},\ \ \lambda>0,
\end{align*}
and
\begin{align*}
\lambda\left|\left\{x\in\mathbb R^d:R^{D,\ast}_jf(x)>\lambda\right\}\right|\le C_d\|R_jf\|_{L^1(\mathbb R^d)},\ \ \lambda>0.
\end{align*}
\par
Obviously, Proposition \ref{M2}, Proposition \ref{M3} and pointwise estimates for $M^\ast$ in the proof of Proposition \ref{dfMastpp} yield that there is a constant $C_d>0$ such that
 \begin{align*}
 M^\ast f(x)\le C_dM^2f(x)\ \ \text{and}\ \ R^\ast_jf(x)\le C_dM^2(R_jf)(x).
 \end{align*}
 \eqref{M2llogl} implies there is a constant $C_d>0$ such that
 \begin{align*}
\left|\left\{x\in\mathbb R^d:M^\ast f(x)>\lambda\right\}\right|\le C_d\int_{\mathbb R^d}\left(\frac{|f(x)|}\lambda\right)\log\left(1+\frac{|f(x)|}\lambda\right)dx,\ \ \lambda>0,
\end{align*}
and
 \begin{align*}
\left|\left\{x\in\mathbb R^d:R^\ast_jf(x)>\lambda\right\}\right|\le C_d\int_{\mathbb R^d}\left(\frac{|R_jf(x)|}\lambda\right)\log\left(1+\frac{|R_jf(x)|}\lambda\right)dx,\ \ \lambda>0.
\end{align*}
 This proves Theorem \ref{endpoint estimates Rj}.

\section{Appendix}
\begin{proposition}\label{M3}
Let $\phi(x)=-\ln\left(|x|-1\right)\chi_{B(0,2)\setminus B(0,1)}(x)$. Then there is a constant $C$ independent of the dimension $d$ such that
\begin{align*}
M_\phi f(x)\le C8^d\ln d\nu_d\ M^2f(x),\ \ x\in\mathbb R^d.
\end{align*}
\end{proposition}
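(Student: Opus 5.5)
We follow the proof of Proposition \ref{M2} almost verbatim, with the ball $B(0,1)$ replaced by $B(0,2)$, which contains the support of $\phi$. By translation and dilation it suffices to prove
\begin{align*}
|\phi\ast f(0)|\le C8^d\ln d\,\nu_d\, M^2f(0).
\end{align*}
The reduction works because $M_\phi f(x)=\sup_{t>0}|\phi_t\ast f(x)|$ and $M^2$ commutes with translations and dilations.

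The first step splits off a constant. Write
\begin{align*}
\phi\ast f(0)=\int_{B(0,2)}(\phi(y)-\chi_{B(0,2)\setminus B(0,1)}(y))f(-y)\,dy+\int_{B(0,2)\setminus B(0,1)}f(-y)\,dy.
\end{align*}
The second term is at most $|B(0,2)|Mf(0)=2^d\nu_dMf(0)$. For the first term we apply the generalized H\"older inequality \eqref{gHi} on $B=B(0,2)$. This bounds it by
\begin{align*}
2^d\nu_d\,\|g\|_{\exp L,B(0,2)}\,\|f\|_{L\log L,B(0,2)},\qquad g=\phi-\chi_{B(0,2)\setminus B(0,1)}.
\end{align*}
Then \eqref{logm2} gives $\|f\|_{L\log L,B(0,2)}\le M_{L\log L}f(0)\le C4^dM^2f(0)$. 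Since $Mf\le M^2f$ pointwise, the whole expression is at most $C8^d\nu_d(\|g\|_{\exp L,B(0,2)}+1)M^2f(0)$. The statement therefore reduces to the bound $\|g\|_{\exp L,B(0,2)}\le C\ln d$, the analogue of \eqref{psi-1}.

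The second step, and the main obstacle, is to verify this exponential-integrability estimate uniformly in $d$. Note that $g$ vanishes on $B(0,1)$, and on the annulus $g(y)=-\ln(|y|-1)-1$. We must show that for $\lambda=C\ln d$, with $d$ large,
\begin{align*}
\frac{1}{|B(0,2)|}\int_{B(0,2)}e^{|g(y)|/\lambda}\,dy
= 2^{-d}+d\,2^{-d}\int_1^2 e^{|\ln(r-1)+1|/\lambda}\,r^{d-1}\,dr\le 2.
\end{align*}
In polar coordinates the radial weight $d2^{-d}r^{d-1}$ concentrates near $r=2$, where $|\ln(r-1)|$ is small, so $e^{|g|/\lambda}\le e^{2/\lambda}$ there. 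The singularity at $r=1^+$ is weighted only by $d2^{-d}$, which is exponentially small.

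We carry this out in three pieces, mirroring the computation for \eqref{psi-1}:
\begin{enumerate}
\item On $[3/2,2]$ we have $|g|\le 1+\ln 2$, so the contribution is at most $e^{2/\lambda}\le 1+O(1/\ln d)$.
\item On $[1+1/d,3/2]$ we have $|g|\le 1+\ln d$. For $\lambda=4\ln d$ the integrand is at most $e^{1/\lambda}d^{1/4}$, and the weight of this interval is at most $(3/4)^d$, so the contribution is negligible.
\item On $[1,1+1/d]$ we bound $r^{d-1}\le e$ and integrate $(r-1)^{-1/\lambda}$ explicitly. This gives at most $Cd\,2^{-d}$, which is again negligible.
\end{enumerate}
The three pieces together give at most $2$ once $d\ge d_0$.

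For small $d$ (that is, $d<d_0$) we argue as in Proposition \ref{M2}. Dominated convergence, using the integrable majorant $(r-1)^{-1/2}$ for $\lambda>2$, shows that $\|g\|_{\exp L,B(0,2)}$ is finite for each fixed $d$. Finitely many dimensions are then absorbed into the constant $C$.
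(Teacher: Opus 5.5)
Your proposal is correct and follows the paper's proof essentially step by step. It uses the same subtraction of a constant on the annulus, the generalized H\"older inequality \eqref{gHi} on $B(0,2)$ combined with \eqref{logm2}, and the same three-piece polar-coordinate check that $\|\phi-1\|_{\exp L,B(0,2)}=O(\ln d)$. The paper cuts at $1+e^{-1}$ and $1+d^{-1}$ with $\lambda=100\ln d$ for all $d\ge3$, while you cut at $3/2$ and $1+1/d$ with $\lambda=4\ln d$ for $d\ge d_0$. In both, the remaining dimensions are absorbed via the dominated-convergence finiteness argument.

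One caveat, shared with the paper: that absorption fails for $d=1$, where $\ln d=0$ makes the right-hand side vanish. So the factor $\ln d$ should be read as $1+\ln d$, or the statement restricted to $d\ge2$.
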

\begin{proof}
By translating and dilating one reduces the proof of Proposition \ref {M3} to
\begin{align*}
\phi\ast f(0)\le C8^d\ln d\nu_d\ M^2f(0).
\end{align*}
For $1\le p<\infty$, $f\in L^p(\mathbb R^d)$,  \eqref{gHi} and \eqref{logm2} give
\begin{align*}
|\phi\ast f(0)|&=\left|\int_{B(0,2)\setminus B(0,1)}\left(\phi(y)-1\right)f(y)dy+\int_{B(0,2)\setminus B(0,1)}f(y)dy\right|\\
&\le\int_{B(0,2)\setminus B(0,1)}|\phi(y)-1|f(y)dy+|B(0,2)|Mf(0)\\
&\le |B(0,2)|\|\phi-1\|_{\exp L,B(0,2)}\|f\|_{L\log L,B(0,2)}+|B(0,2)|Mf(0)\\
&\le C2^d\ln d\nu_d\ M_{L\log L}f(0)+2^d\nu_d\ Mf(0)\\
&\le C8^d\ln d\nu_d\ M^2f(0),
\end{align*}
where we have used the fact that
\begin{align}\label{phi-1}
\|\phi-1\|_{\exp L,B(0,2)}<100\ln d.
\end{align}
Indeed, we write
\begin{align*}
\|\phi-1\|_{\exp L,B(0,2)}&=\inf\bigg\{\lambda>0:\frac1{|B(0,2)|}\int_{B(0,2)}\big(e^{\frac{|\phi(y)-1|}{\lambda}}-1\big)dy\le 1\bigg\}\\
&=\inf\bigg\{\lambda>0:\frac1{|B(0,2)|}\int_{B(0,2)}e^{\frac{|\phi(y)-1|}{\lambda}}dy\le 2\bigg\}.
\end{align*}
\par
Observe that $-\ln(s-1)$ is strictly decreasing on $(1,2]$, $\phi(y)=0$ when $|y|=2$, and $\phi(y)$ approches $+\infty$ as $|y|$ tends to $1$ from right. These facts imply that $\phi(y)-1<0$ when $y\in B(0,2)\setminus B(0,1+e^{-1})$ and $\phi(y)-1\ge0$ when $y\in B(0,1+e^{-1})\setminus B(0,1)$. It follows that
\begin{align*}
e^{\frac{|\phi(y)-1|}{\lambda}}=e^{-\frac1\lambda}\chi_{B(0,1+e^{-1})\setminus B(0,1)}(y)(|y|-1)^{-\frac1{\lambda}}+e^{\frac1\lambda}\chi_{B(0,2)\setminus B(0,1+e^{-1})}(y)(|y|-1)^{\frac1{\lambda}}.
\end{align*}
When $\lambda>2$, it is dominated by the following integrable function
\begin{align*}
\chi_{B(0,1+e^{-1})\setminus B(0,1)}(y)(|y|-1)^{-\frac12}+e^{\frac12}\chi_{B(0,2)\setminus B(0,1+e^{-1})}(y).
\end{align*}
Lebesgue's dominated convergence theorem yields
\begin{align*}
\lim_{\lambda\rightarrow+\infty}\frac1{|B(0,2)|}\int_{B(0,2)}e^{\frac{|\phi(y)-1|}{\lambda}}dy=1.
\end{align*}
Thus, there is a $\lambda_0>0$ such that 
\begin{align*}
\frac1{|B(0,2)|}\int_{B(0,2)}e^{\frac{|\phi(y)-1|}{\lambda_0}}dy\le 2.
\end{align*}
Therefore, we have
\begin{align*}
\|\phi-1\|_{\exp L,B(0,2)}\le \lambda_0<\infty.
\end{align*}
Moreover, when $d\ge3$ and $\lambda=100\ln d$, by switching to polar coordinates, we observe that
\begin{align*}
\frac1{|B(0,2)|}\int_{B(0,2)}e^{\frac{|\phi(y)-1|}{\lambda}}dy&=\frac {de^{\frac1\lambda}}{2^d}\int^2_{1+e^{-1}}(r-1)^{\frac1{\lambda}}r^{d-1}dr+\frac {de^{-\frac1\lambda}}{2^d}\int_1^{1+e^{-1}}(r-1)^{-\frac1{\lambda}}r^{d-1}dr+\frac {e^{\frac1\lambda}}{2^d}\\
&\le \frac {de^{\frac1\lambda}}{2^d}\int^2_{1+e^{-1}}r^{d-1}dr+\frac {dd^{\frac1\lambda}e^{-\frac1\lambda}}{2^d}\int^{1+e^{-1}}_{1+d^{-1}}r^{d-1}dr\\
&+\frac {de^{-\frac1\lambda}}{2^d}\left(1+\frac1d\right)^{d-1}\int_1^{1+d^{-1}}(r-1)^{-\frac1{\lambda}}dr+\frac {e^{\frac1\lambda}}{2^d} \\
&\le e^{\frac1\lambda}\left(1-\left(\frac {1+e^{-1}}2\right)^d\right)+\frac {d^{\frac1\lambda}e^{-\frac1\lambda}}{2^d}\left[\left(1+\frac1e\right)^{d}-\left(1+\frac1d\right)^{d}\right]\\
&+\frac{d^{1+\frac1\lambda}e^{1-\frac1\lambda}\lambda}{2^d(\lambda-1)(1+d)}+\frac {e^{\frac1\lambda}}{2^d}\\
&\le2,
\end{align*}
which means 
\begin{align*}
\|\phi-1\|_{\exp L,B(0,2)}\le 100\ln d.
\end{align*}
This completes the proof. 
\end{proof}
Using arguments similar to that in Theorem \ref{ppe}, Theorem \ref{dfppe}, Theorem \ref{notw11} and Remark \ref{another proof}, we obtain the following two results.
\begin{theorem}\label{ppephi}
For $1<p\le\infty$, there is a constant $C_p$ independent of the dimension $d$ such that
\begin{align}\label{dfmphi}
\|M_\phi f\|_{L^p(\mathbb R^d)}\le C_p8^d\ln d\nu_d\|f\|_{L^p(\mathbb R^d)},
\end{align}
and
\begin{align*}
\|M_\phi f\|_{L^p(\mathbb R^d)}\le C_p\|\phi\|_{L^1(\mathbb R^d)}\|f\|_{L^p(\mathbb R^d)}.
\end{align*}
\end{theorem}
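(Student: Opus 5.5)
The first inequality \eqref{dfmphi} will follow exactly as Theorem \ref{ppe} did. By Proposition \ref{M3} we have the pointwise bound $M_\phi f\le C8^d\ln d\,\nu_d\,M^2f$. We then apply twice the dimension-free $L^p$ bound $\|Mf\|_{L^p}\le C_p\|f\|_{L^p}$, $1<p\le\infty$, from \cite{S82,SS83}.

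The second inequality will follow the scheme of Theorem \ref{dfppe}, with $\psi$ replaced by $\phi$ and the profile $-\ln(1-r^2)$ on $[0,1)$ replaced by $-\ln(r-1)$ on $(1,2)$.

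\textbf{Reduction to large $d$.} Fix $p$ and set $d_0=\max\{4,\frac p{p-1}\}+1$. For $d\le d_0$ there are only finitely many dimensions, so \eqref{dfmphi} gives $\|M_\phi f\|_p\le C_p8^d\ln d\,\nu_d\|f\|_p$. We have
\begin{align*}
\|\phi\|_{L^1}=-\omega_{d-1}\int_1^2\ln(r-1)r^{d-1}dr\ge c\,\omega_{d-1},
\end{align*}
since the integrand is nonnegative and positive on a set of positive measure. The ratio $8^d\ln d\,\nu_d/\|\phi\|_{L^1}$ is therefore bounded by a constant depending only on $p$.

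\textbf{Method of rotations for $d>d_0$.} Take $k=\lceil\max\{\frac p{p-1},3\}\rceil$ and write $\mathbb R^d=\mathbb R^k\times\mathbb R^{d-k}$. Define
\begin{align*}
M_k^\tau f(x)=\sup_{t>0}\frac{-1}{t^d}\int_{\mathbb R^k}\ln\Big(\frac{|\widetilde{y_1}|}t-1\Big)\chi_{\{t<|\widetilde{y_1}|<2t\}}\left|f\left(x-\tau(\widetilde{y_1},0)\right)\right||\widetilde{y_1}|^{d-k}d\widetilde{y_1}.
\end{align*}
The proof of Proposition \ref{MpsicontrbyMkt} uses only that the kernel is radial and the identity \cite[(4.2)]{SS83}. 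It therefore carries over verbatim and gives
\begin{align*}
M_\phi f(x)\le\frac{\omega_{d-1}}{\omega_{k-1}}\int_{\mathcal O(d)}M_k^\tau f(x)\,d\tau .
\end{align*}
By \eqref{iso} it suffices to treat $\tau=Id$.

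\textbf{Control by the spherical maximal function.} In polar coordinates in $\mathbb R^k$, the average inside $M_k^{Id}$ equals
\begin{align*}
\frac{-\omega_{k-1}}{t^d}\int_t^{2t}\ln\Big(\frac rt-1\Big)r^{d-1}\int_{\mathbb S^{k-1}}|f(\widetilde{x_1}-r\widetilde{y_1}',\widetilde{x_2})|\,d\sigma\,dr.
\end{align*}
Here $-\ln(r/t-1)$ is negative on $(\frac32 t,2t)$. There we bound the integrand by the absolute value $|\ln(r/t-1)|$, which keeps the estimate valid. After the change of variables $r=ts$, the radial weight has total mass $\int_1^2|\ln(s-1)|s^{d-1}ds$. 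So
\begin{align*}
M_k^{Id}f(x)\le \frac{\omega_{k-1}}{\omega_{d-1}}\|\,|\phi|\,\|_{L^1}\mathscr M_k(f_{\widetilde{x_2}})(\widetilde{x_1}).
\end{align*}
Lemma \ref{bounds for sph}, applied with $p>\frac k{k-1}$ and integrated in $\widetilde{x_2}$, then gives $\|M_\phi f\|_p\le C(k,p)\|\,|\phi|\,\|_{L^1}\|f\|_p$, where $C(k,p)$ depends on $p$ alone.

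\textbf{Main obstacle: the sign change of $\phi$.} In the statement $\|\phi\|_{L^1}$ means $\int|\phi|$, which resolves the sign issue formally. The real check is the large-$d$ regime, where the mass of $r^{d-1}$ concentrates near $r=2$, exactly where $\phi$ is small. There we need $\int_1^2|\ln(s-1)|s^{d-1}ds$ to be comparable with $\int_1^2$ of the positive part only. That comparison is not needed for the inequality as stated, since it is phrased with $\|\phi\|_{L^1}=\int|\phi|$. It would only matter if one wanted a bound in terms of $\int\phi$.
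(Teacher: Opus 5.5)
Your proposal is correct and follows the paper's route: \eqref{dfmphi} comes from Proposition \ref{M3} together with the dimension-free $L^p$ bound for $M$, and the second estimate repeats the argument of Theorem \ref{dfppe} (small $d$ absorbed into \eqref{dfmphi} via $\|\phi\|_{L^1(\mathbb R^d)}\ge c\,\omega_{d-1}$, large $d$ via the rotation identity of Proposition \ref{MpsicontrbyMkt} for the radial kernel $\phi$ and Stein's spherical maximal theorem in $\mathbb R^k$). One correction: $\phi$ never changes sign, because $0<|x|-1<1$ on its support gives $-\ln(|x|-1)>0$ (it is $\phi-1$ that changes sign, at $|x|=1+e^{-1}$, in the proof of Proposition \ref{M3}); so your claim that $-\ln(r/t-1)<0$ on $(\frac32t,2t)$ is false and your ``main obstacle'' paragraph addresses a non-issue, although passing to $|\ln(r/t-1)|$ is harmless.
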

\begin{theorem}\label{notw11phi}
$M_\phi$ is not of weak type $(1,1)$, i.e. for any positive constant $C$, there exists a function $f\in L^1(\mathbb R^d)$ such that
\begin{align*}
\|M_\phi f\|_{L^{1,\infty}(\mathbb R^d)}\ge C\|f\|_{L^1(\mathbb R^d)}.
\end{align*}
However, $M_\phi$ is of weak type $L\log L$, i.e.
there is a positive constant $C$ independent of the dimension $d$ such that
\begin{align*}\label{epephi}
\left|\left\{x\in\mathbb R^d:M_\phi f(x)>\lambda\right\}\right|\le C\int_{\mathbb R^d}\Phi\left(\frac{|f(x)|}\lambda\right)dx,\ \ \lambda>0,
\end{align*}
where $\Phi(s)=s\left(1+\log^+s\right)$.
\end{theorem}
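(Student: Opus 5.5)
The statement is Theorem \ref{notw11phi}. Its proof splits into two parts, the weak $L\log L$ bound and the failure of weak type $(1,1)$, and both mirror the argument given for $M_\psi$ in Theorem \ref{notw11}.

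\emph{Weak $L\log L$ bound.} I would start from the pointwise bound of Proposition \ref{M3},
\begin{align*}
M_\phi f(x)\le C8^d\ln d\,\nu_d\, M^2f(x).
\end{align*}
Since $8^d\ln d\,\nu_d\to0$ as $d\to\infty$, the constant $C8^d\ln d\,\nu_d$ is bounded by an absolute constant $A$; when $A<1$, convexity of $\Phi$ gives $\Phi(As)\le A\Phi(s)$. Applying \eqref{M2llogl} to $M^2f$ at level $\lambda/A$ then gives
\begin{align*}
\left|\left\{x\in\mathbb R^d:M_\phi f(x)>\lambda\right\}\right|\le C4^d\int_{\mathbb R^d}\Phi\left(\frac{A|f(x)|}{\lambda}\right)dx.
\end{align*}
One subtlety needs care. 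The factor $4^d$ in \eqref{M2llogl} is not dimension-free, so I would keep it in the product $4^d\cdot 8^d\ln d\,\nu_d$. This product still tends to $0$, because $\nu_d$ decays superexponentially. Pulling the product out through convexity of $\Phi$ therefore yields a uniform constant $C$. Finitely many small $d$ are absorbed into the constant.

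\emph{Failure of weak type $(1,1)$.} The quickest route is Remark \ref{another proof}(1), the Carrillo--Guzm\'an criterion. For each $x'\in\mathbb S^{d-1}$ we have
\begin{align*}
H_\phi(x')=\esssup_{r>0}r^d|\phi(rx')|\ge\esssup_{1<r<2}\left(-\ln(r-1)\right)=+\infty.
\end{align*}
Hence \eqref{HK} fails, so no non-lacunary maximal operator with kernel $\phi$ is of weak type $(1,1)$, and in particular $M_\phi$ is not.

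I would also record a direct construction, as in the proof of Theorem \ref{notw11}. Take $f=\chi_{B(0,1)}$ and a point $x=|x|\mathbf e_1$ with $|x|$ large. Choose $t$ so that the small ball $B(x/t,1/t)$ sits inside the annulus, just outside the singular sphere $|y|=1$, for instance $t=|x|-1$. The logarithmic blow-up of $\phi$ near $|y|=1$ then gives, exactly as before via Proposition \ref{fn},
\begin{align*}
M_\phi f(x)\gtrsim\frac{\ln|x|}{|x|^d}.
\end{align*}
The level-set computation with the Lambert $W$ bounds \eqref{est of W} then shows $\lambda|\{M_\phi f>\lambda\}|\to\infty$ as $\lambda\to0^+$.

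\emph{Main obstacle.} The delicate step is this lower bound in the direct construction. The polar-coordinate description must now be taken on the region $r\in(r_1(\theta_1),r_2(\theta_1))$ with $r>1$. One has to check that $-\ln(r-1)$ integrated over the small ball picks up a $\ln|x|$ factor, which holds because the ball has radius about $1/|x|$ and touches $r=1$. Since the Carrillo--Guzm\'an route avoids this computation entirely, I would present that route as the proof and give the construction only as a remark.
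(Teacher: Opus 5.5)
Your proposal is correct and takes the same route the paper indicates. The weak $L\log L$ bound comes from Proposition \ref{M3}, \eqref{M2llogl} and convexity of $\Phi$; your explicit tracking of the $4^d$ factor against the superexponential decay of $\nu_d$, with small $d$ absorbed into the constant, is exactly what is needed. The failure of weak type $(1,1)$ comes either from the Carrillo--Guzm\'an criterion of Remark \ref{another proof} or from the direct construction of Theorem \ref{notw11}.

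The lower bound in the direct construction is easier than you expect, and Proposition \ref{fn} is not needed there. With $t=|x|-1$ and $|x|>3$, every point $z$ of $B(x/t,1/t)$ satisfies $0<|z|-1<2/t\le 1$, so $\phi_t\ast f(x)\ge \nu_d\, t^{-d}\ln(t/2)$ with no polar-coordinate computation.
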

We also proved the following result.
\begin{theorem}\label{dyad w11phi}
$M^D_\phi$ is  of weak type $(1,1)$, i.e. there is a positive constant $C_d$ dependent on the dimension $d$ such that
\begin{align*}
\lambda\left|\left\{x\in\mathbb R^d:M^D_\phi f(x)>\lambda\right\}\right|\le C_d\|f\|_{L^1(\mathbb R^d)},\ \ \lambda>0.
\end{align*}
\end{theorem}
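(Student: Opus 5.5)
We need to prove Theorem \ref{dyad w11phi}: the lacunary operator $M^D_\phi$, with $\phi(x)=-\ln(|x|-1)\chi_{B(0,2)\setminus B(0,1)}(x)$, is of weak type $(1,1)$ with a dimension-dependent constant. I will follow the scheme of the proof of Theorem \ref{dyad w11}.

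\textbf{Step 1 ($L^p$ bounds).} Since $M^D_\phi\le M_\phi$, \eqref{dfmphi} in Theorem \ref{ppephi} (itself a consequence of Proposition \ref{M3} and the boundedness of $M$) shows that $M^D_\phi$ is bounded on $L^p(\mathbb R^d)$ for $1<p\le\infty$. By the vector-valued Calder\'on--Zygmund theory for sublinear maximal operators \cite[Corollary 5.6.5]{G14}, weak type $(1,1)$ then reduces to the H\"ormander-type condition
\begin{align*}
\sup_{y\neq0}\int_{|x|\ge2|y|}\sup_{j\in\mathbb Z}\left|\phi_{2^j}(x-y)-\phi_{2^j}(x)\right|dx<\infty.
\end{align*}

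\textbf{Step 2 (reduction of the kernel).} Write $\phi=\phi^++\phi^-$, where $\phi^+=\max\{\phi,0\}$ is supported in $1<|x|\le 2$ and $\phi^-$ is bounded there. The bounded part $\phi\,\chi_{\{|x|\ge 3/2\}}$ is dominated by $C\chi_{B(0,2)}$, and its maximal function is controlled by $C_dMf$. Thus it suffices to treat $\phi^\sharp(x)=-\ln(|x|-1)\chi_{\{1<|x|<3/2\}}(x)$, whose only singularity is the logarithmic blow-up at the inner sphere $|x|=1$. The sublinearity of the supremum lets me split the kernel this way.

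\textbf{Step 3 (regularity estimate).} Fix $y=|y|\mathbf e_1$ with $|y|\in[2^{k_0-1},2^{k_0})$, and decompose $\{|x|\ge2|y|\}$ into dyadic annuli $R_k$, $k\ge k_0$. On each annulus only $O(1)$ values of $j$ have $\phi^\sharp_{2^j}(x)$ or $\phi^\sharp_{2^j}(x-y)$ nonzero, namely those with $2^j<|x|<\tfrac32\, 2^j$ up to a shift of $|y|$. So the supremum over $j$ is bounded by a sum of $O(1)$ terms $|\phi^\sharp_{2^j}(x-y)-\phi^\sharp_{2^j}(x)|$ with $2^j\sim2^k$. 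For each such $j$ there are two regions.
\begin{itemize}
\item[(a)] Where both $x$ and $x-y$ lie in the support, I use $|\ln(a-1)-\ln(b-1)|\le\ln\bigl(1+|a-b|/\min(a-1,b-1)\bigr)$ with $|a-b|\le|y|/2^j$.
\item[(b)] Where only one of them lies in the support, the point lies within distance $|y|$ of the spheres $|x|=2^j$ or $|x|=\tfrac32 2^j$.
\end{itemize}
In polar coordinates, case (a) gives
\begin{align*}
2^{-jd}\int_{2^j<|x|<\frac32 2^j}\ln\Bigl(1+\frac{|y|}{|x|-2^j}\Bigr)dx\lesssim \omega_{d-1}(3/2)^{d}\,\frac{|y|}{2^j}\Bigl(1+\ln\frac{2^j}{|y|}\Bigr).
\end{align*}
Case (b) gives
\begin{align*}
C_d\int_0^{|y|/2^j}|\ln s|\,ds\lesssim C_d\,\frac{|y|}{2^j}\Bigl(1+\ln\frac{2^j}{|y|}\Bigr).
\end{align*}
Both bounds are of size $(k-k_0)2^{-(k-k_0)}$, and summing over $k\ge k_0$ gives a finite constant depending only on $d$.

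The main obstacle is the bookkeeping in Step 3. Unlike $\varphi$ in Theorem \ref{dyad w11}, the kernel is singular on the \emph{inner} boundary and vanishes on the outer one. The comparisons that identified the maximizing $j$ in that proof (the squaring tricks for $R_{k,2}$ and $R_{k,4}$) are therefore not available. I would avoid them altogether by bounding the supremum by the finite sum over the $O(1)$ relevant scales, accepting a worse dimensional constant, which is permitted since $C_d$ may depend on $d$.
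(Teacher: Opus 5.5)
Your proposal is correct and follows essentially the same route as the paper. Both arguments get $L^p$ bounds from Theorem~\ref{ppephi}, reduce to the H\"ormander condition via the vector-valued Calder\'on--Zygmund corollary, and split into dyadic annuli $R_k$. Both then use the fact that for $x\in R_k$ only the scales $j\in\{k-1,k,k+1\}$ contribute, so the supremum can be replaced by a maximum or sum of a few terms. The paper does exactly this on $R_{k,2}$ and $R_{k,4}$ and likewise drops the squaring comparisons from Theorem~\ref{dyad w11}.

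Your remaining choices are cosmetic. You split off the bounded piece $\phi\chi_{\{|x|\ge 3/2\}}$ and dominate it by $C_dMf$; the $L^p$ bound you then need for $M^D_{\phi^\sharp}$ follows at once from $0\le\phi^\sharp\le\phi$. You also organize the estimate into two cases (both points in the support, or only one) instead of the paper's four regions $R_{k,i}$.
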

\begin{proof}
Theorem \ref{ppephi} implies that $M_\phi^D$ is bounded on $L^p(\mathbb R^d)$ for $1<p\le\infty$. We apply vector-valued estimates for the sublinear operator (see \cite[Corollary 5.6.5]{G14}), it suffices to verify the following regularity condition
\begin{align*}
\sup_{y\neq0}\int_{|x|\ge2|y|}\sup_{j\in\mathbb Z}\left|\phi_{2^j}(x-y)-\phi_{2^j}(x)\right|dx<\infty.
\end{align*}
\par
Without loss of generality, we assume that $y=(|y|,0,\cdots,0)$ and $|y|\neq0$. Then, there is a $k_0\in\mathbb Z$ such that  $|y|\in[2^{k_0-1},2^{k_0})$. For $|x|\ge2|y|$, we observe that $x_1>\frac{|y|}2$ if and only if $|x|>|x-y|$.
\par
Set $R_{k_0}=\left\{x:2|y|\le |x|<2^{k_0+1}\right\}$, $R_k=\left\{x:2^k\le |x|<2^{k+1}\right\}$ for $k\ge k_0+1$. We write
\begin{align*}
\int_{|x|\ge2|y|}\sup_{j\in\mathbb Z}\left|\phi_{2^j}(x-y)-\phi_{2^j}(x)\right|dx=\sum_{k\ge k_0}\int_{R_k}\sup_{j\in\mathbb Z}\left|\phi_{2^j}(x-y)-\phi_{2^j}(x)\right|dx.
\end{align*}
\par
We work with summands for $k\ge k_0+1$, the summand with $k=k_0$ can be treated similarly. Clearly, for $x\in R_k$, $\phi_{2^j}(x)=0$ when $j\neq k$, and $\phi_{2^j}(x-y)=0$ when $|j-k|\ge2$.
\par
To estimate each summand, we divide each $R_k$ into four parts: 
$$R_{k,1}:=\left\{x\in R_k:x_1>\frac{|y|}2,x\notin B(y,2^{k})\right\},$$
$$R_{k,2}:=\left\{x\in R_k:x_1>\frac{|y|}2,x\in B(y,2^{k})\right\},$$
$$R_{k,3}:=\left\{x\in R_k:x_1\le\frac{|y|}2,x\in B(y,2^{k+1})\right\},$$
$$R_{k,4}:=\left\{x\in R_k:x_1\le\frac{|y|}2,x\notin B(y,2^{k+1})\right\}.$$
\par
For each summand, we write it as the sum of four terms
\begin{align*}
\int_{R_k}\sup_{j\in\mathbb Z}\left|\phi_{2^j}(x-y)-\phi_{2^j}(x)\right|dx&=\sum_{i=1}^4\int_{R_{k,i}}\sup_{j\in\mathbb Z}\left|\phi_{2^j}(x-y)-\phi_{2^j}(x)\right|dx.
\end{align*}
\par
If $x\in R_{k,1}$, then $2^{k}\le|x-y|\le|x|<2^{k+1}$.  For $j=k-1,k+1$, $\phi_{2^j}(x-y)=0$. Thus,
\begin{align*}
\sup_{j\in\mathbb Z}\left|\phi_{2^j}(x-y)-\phi_{2^j}(x)\right|=\phi_{2^k}(x-y)-\phi_{2^k}(x)=\frac1{2^{kd}}\ln\left(1+\frac{|x|-|x-y|}{|x-y|-2^k}\right).
\end{align*}
Note that $|x|-|x-y|\le |y|$. Therefore, using the change of variables we deduce that
\begin{align*}
\int_{R_{k,1}}\sup_{j\in\mathbb Z}\left|\phi_{2^j}(x-y)-\phi_{2^j}(x)\right|dx&\le\frac1{2^{kd}}\int_{2^k\le|x-y|<2^{k+1}}\ln\left(1+\frac{|y|}{|x-y|-2^k}\right)dx\\
&\le\frac{2^{d-1}\omega_{d-1}|y|}{2^{k}}\int_{\frac {|y|}{2^k}}^\infty\frac{\ln(1+s)}{s^2}ds\\
&\le 3\cdot2^{d}\omega_{d-1}\frac{k-k_0}{2^{k-k_0}}.
\end{align*}
\par
If $x\in R_{k,2}$, then $2^k-|y|\le|x-y|<2^k$. For $j=k,k+1$, $\phi_{2^j}(x-y)=0$. 
Thus,
\begin{align*}
\sup_{j\in\mathbb Z}\left|\phi_{2^j}(x-y)-\phi_{2^j}(x)\right|=\max\{\phi_{2^{k-1}}(x-y),\phi_{2^k}(x)\}\le\phi_{2^{k-1}}(x-y)+\phi_{2^k}(x).
\end{align*}
Therefore, using the change of variables we have
\begin{align*}
\int_{R_{k,2}}\sup_{j\in\mathbb Z}\left|\phi_{2^j}(x-y)-\phi_{2^j}(x)\right|dx&\le-\frac1{2^{(k-1)d}}\int_{2^k-|y|\le|x-y|<2^k}\ln\left(\frac{|x-y|}{2^{k-1}}-1\right)dx\\
&-\frac1{2^{kd}}\int_{2^k\le|x|<2^k+|y|}\ln\left(\frac{|x|}{2^{k}}-1\right)dx\\
&\le-2^{d-1}\omega_{d-1}\left(\int_{2-\frac{|y|}{2^{k-1}}}^2\ln(s-1)ds+\int^{1+\frac{|y|}{2^k}}_1\ln(s-1)ds\right)\\
&\le 3\cdot2^d\omega_{d-1}\frac{k-k_0}{2^{k-k_0}}.
\end{align*}
\par
 If $x\in R_{k,3}$, then $2^k\le|x|\le|x-y|<2^{k+1}$. For $j=k-1,k+1$, $\phi_{2^j}(x-y)=0$. As in the case $x\in R_{k,1}$ we have
\begin{align*}
\sup_{j\in\mathbb Z}\left|\phi_{2^j}(x)-\phi_{2^j}(x-y)\right|&=\phi_{2^k}(x)-\phi_{2^k}(x-y)=\frac1{2^{kd}}\ln\left(1+\frac{|x-y|-|x|}{|x|-2^k}\right).
\end{align*}
Note that $|x-y|-|x|\le |y|$. Therefore, using the change of variables we deduce that
\begin{align*}
\int_{R_{k,3}}\sup_{j\in\mathbb Z}\left|\phi_{2^j}(x-y)-\phi_{2^j}(x)\right|dx&\le\frac1{2^{kd}}\int_{2^k\le|x|<2^{k+1}}\ln\left(1+\frac{|y|}{|x|-2^k}\right)dx\\
&\le\frac{2^d\omega_{d-1}|y|}{2^{k+1}}\int_{\frac {|y|}{2^k}}^\infty\frac{\ln(1+s)}{s^2}ds\\
&\le 3\cdot 2^d\omega_{d-1}\frac{k-k_0}{2^{k-k_0}}.
\end{align*}
\par
 If $x\in R_{k,4}$, then $2^{k+1}-|y|\le|x|<2^{k+1}$. For $j=k-1,k$, $\phi_{2^j}(x-y)=0$. Thus,
\begin{align*}
\sup_{j\in\mathbb Z}\left|\phi_{2^j}(x-y)-\phi_{2^j}(x)\right|=\max\left\{\phi_{2^{k+1}}(x-y),\phi_{2^k}(x)\right\}\le\phi_{2^{k+1}}(x-y)+\phi_{2^k}(x).
\end{align*}
Therefore, using the change of variables we deduce that
\begin{align*}
\int_{R_{k,4}}\sup_{j\in\mathbb Z}\left|\phi_{2^j}(x-y)-\phi_{2^j}(x)\right|dx&\le-\frac1{2^{(k+1)d}}\int_{2^{k+1}\le|x-y|<2^{k+1}+|y|}\ln\left(\frac{|x-y|}{2^{k+1}}-1\right)dx\\
&-\frac1{2^{kd}}\int_{2^{k+1}-|y|\le|x|<2^{k+1}}\ln\left(\frac{|x|}{2^k}-1\right)dx\\
&\le-2^{d-1}\omega_{d-1}\left(\int^{1+\frac{|y|}{2^{k+1}}}_1\ln(s-1)ds+\int_{2-\frac{|y|}{2^k}}^2\ln(s-1)ds\right)\\
&\le 3\cdot2^d\omega_{d-1}\frac{k-k_0}{2^{k-k_0}}.
\end{align*}
\par
Finally, combining above four estimates, we obtain
\begin{align*}
\int_{|x|\ge2|y|}\sup_{j\in\mathbb Z}\left|\phi_{2^j}(x-y)-\phi_{2^j}(x)\right|dx&=\sum_{k\ge k_0}\int_{R_k}\sup_{j\in\mathbb Z}\left|\phi_{2^j}(x-y)-\phi_{2^j}(x)\right|dx\\
&\le 12\cdot2^d\omega_{d-1}\sum_{k\ge k_0+1}\frac{k-k_0}{2^{k-k_0}}\le 24\cdot2^d\omega_{d-1}.
\end{align*}
\end{proof}

\newpage
\begin{center}
Declarations
\end{center}
\section*{Funding}
The first author was partially supported by grants RYC2021-032600-I and PID2022-141354NB-I00. All authors were supported  by NSF of China grant 12571101.
\section*{Conflicts of interest/Competing interests}
On behalf of all authors, the corresponding author states that there is no conflict of interest.
\section*{Availability of data and material}
Not applicable.
\section*{Code availability}
Not applicable.
\end{document}